\documentclass[a4paper]{article}
\usepackage[T1]{fontenc}
\usepackage{amsfonts,amsmath,amsthm,amssymb}
\usepackage{calc}
\usepackage{gensymb}
\usepackage[nottoc, notlof, notlot]{tocbibind}
\usepackage{fullpage}
\usepackage{mathrsfs}  
\usepackage{bigints}
\usepackage{indentfirst}
\usepackage{enumerate}
\usepackage{authblk}

\DeclareMathOperator{\tr}{Tr}
\DeclareMathOperator{\cov}{Cov}
\DeclareMathOperator{\var}{Var}

\DeclareMathOperator{\supp}{Supp}
\let\limsup\relax
\DeclareMathOperator*{\limsup}{limsup}
\let\liminf\relax
\DeclareMathOperator*{\liminf}{liminf}

\newcommand{\norm}[1]{\left\Vert #1\right\Vert}

\begin{document}

\newtheorem{theorem}{Theorem} [section]
\newtheorem{prop}[theorem]{Proposition} 
\newtheorem{defi}[theorem]{Definition} 
\newtheorem{exe}[theorem]{Exemple} 
\newtheorem{lemma}[theorem]{Lemma} 
\newtheorem{rem}[theorem]{Remark} 
\newtheorem{cor}[theorem]{Corollary} 
\newtheorem{conj}[theorem]{Conjecture}
\renewcommand\P{\mathbb{P}}
\newcommand\E{\mathbb{E}}
\newcommand\N{\mathbb{N}}
\newcommand\1{\mathbf{1}}
\newcommand\C{\mathbb{C}}
\newcommand\CC{\mathcal{C}}
\newcommand\M{\mathbb{M}}
\newcommand\R{\mathbb{R}}
\newcommand\U{\mathbb{U}}
\newcommand\A{\mathcal{A}}
\newcommand\B{\mathcal{B}}
\renewcommand\i{\mathbf{i}}
\renewcommand\S{\mathcal{S}_{N,t}}
\renewcommand\L{\mathcal{L}_{\Phi}}
\renewcommand\d{\partial_i}
\renewcommand\.{\ .}
\renewcommand\,{\ ,}

\def\etc{,\dots ,}

\title{On the operator norm of non-commutative polynomials in deterministic matrices and iid GUE matrices}

\date{}

\author[1]{Beno\^it Collins}
\author[2]{Alice Guionnet}
\author[3]{F\'elix Parraud}
\affil[1]{\small Department of Mathematics, Graduate School of Science, Kyoto University, Kyoto 606-8502, Japan.}
\affil[2]{\small Universit\'e de Lyon, CNRS, ENSL, 46 all\'ee d'Italie, 69007 Lyon.}
\affil[3]{\small Universit\'e de Lyon, ENSL, UMPA, 46 all\'ee d'Italie, 69007 Lyon. Department of Mathematics, Graduate School of Science, Kyoto University, Kyoto 606-8502, Japan.}

\maketitle

\begin{abstract}
		
	Let $X^N = (X_1^N\etc X^N_d)$ be a d-tuple of $N\times N$ independent GUE random matrices and $Z^{NM}$ be any family of 
	deterministic matrices in $\M_N(\C)\otimes \M_M(\C)$. Let $P$ be a self-adjoint non-commutative polynomial. A seminal work of 
	Voiculescu shows that the empirical measure of the eigenvalues of $P(X^N)$  converges towards a deterministic measure defined 
	thanks to  free probability theory. Let now $f$ be a smooth function, the main technical result of this paper is a precise bound of the 
	difference between the expectation of
	$$ \frac{1}{MN} \tr_{\M_N(\C)}\otimes\tr_{\M_M(\C)}\left( f(P(X^N\otimes I_M,Z^{NM})) \right) \, $$
	
	\noindent and its limit when $N$ goes to infinity. If $f$ is six times differentiable, we show that it is bounded by 
	$M^2 \norm{f}_{\mathcal{C}^6} N^{-2}$. As a corollary we obtain a new proof of a result of Haagerup and Thorbj\o rnsen, later developed 
	by Male,  which gives sufficient conditions for the operator norm of a polynomial evaluated in $(X^N,Z^{NM},{Z^{NM}}^*)$ to converge 
	almost surely towards its free limit. Restricting ourselves to polynomials in independent GUE matrices, we give concentration estimates 
	on the largest eingenvalue of these polynomials around their free limit. A direct consequence of these inequalities is that there exists 
	some $\beta>0$ such that for any $\varepsilon_1 < (3+\beta)^{-1}$ and $\varepsilon_2 < 1/4$, almost surely for $N$ large enough,
	
	$$ - \frac{1}{N^{\varepsilon_1}}\ \leq \| P(X^N)\| - \norm{P(x)} \leq\ \frac{1}{N^{\varepsilon_2}} \. $$
	
	\noindent Finally if $X^N$ and $Y^{M_N}$ are independent and $M_N = o(N^{1/3})$, then almost surely, the norm of any polynomial in $(X^N\otimes I_{M_N}, I_N\otimes Y^{M_N})$ converges almost surely towards its free limit. This result is an improvement of a Theorem of Pisier in \cite{pisier}, who was himself using estimates from Haagerup and Thorbj\o rnsen, where $M_N$ had size $o(N^{1/4})$.
	
\end{abstract}

\section{Introduction}

Given several deterministic matrices whose spectra are known, the spectra of  a non-commutative polynomial evaluated in these matrices is 
not well defined since it depends as well on the eigenvectors of these matrices. If one takes these vectors at random, 
 it is possible to get some surprisingly good results, in particular when the dimension of these matrices goes to infinity. Indeed, the limit can 
 then be computed thanks to free probability. 
 This theory  was  introduced by Voiculescu in the early nineties as a non-commutative probability theory equipped with a notion of freeness 
 analogous to independence in classical probability theory.  Voiculescu showed that this theory was closely related with 
 Random Matrix Theory in a seminal paper  \cite{Vo91}. He considered independent  matrices taken from the Gaussian Unitary Ensemble 
 (GUE), which are random matrix is an $N\times N$ self-adjoint random matrix whose distribution is proportional to the measure 
 $\exp\left( -N/2\tr_N(A^2)\right) dA$, where $dA$ denotes the Lebesgue measure on the set of $N\times N$ Hermitian matrices. We refer to 
 Definition \ref{GUEdef} for a more precise statement.   Voiculescu proved that given $X_1^N,\dots,X_d^N$ independent GUE matrices, the 
 renormalized trace of a polynomial $P$ evaluated in these matrices converges towards a deterministic limit $\alpha(P)$. 
 Specifically, the following holds true almost surely:

\begin{equation}\label{dv} \lim_{N\to \infty} \frac{1}{N}\tr_N\left( P(X_1^N,\dots,X_d^N) \right) = \alpha(P) \.\end{equation}

\noindent Voiculescu computed the limit $\alpha(P)$ with the help of free probability.  If $A_N$ is a self-adjoint matrix of size $N$, 
then one can define the empirical measure of its (real) eigenvalues by 

$$ \mu_{A_N} = \frac{1}{N} \sum_{ i=1}^N \delta_{\lambda_i} \,$$

\noindent where $\delta_{\lambda}$ is the Dirac mass in $\lambda$ and $\lambda_1\etc \lambda_N$ are the eingenvalue of $A_N$. In 
particular, if $P$ is a self-adjoint polynomial, that is such that for any self adjoint matrices $A_1\etc A_d$, $P(A_1\etc A_d)$ is a self-adjoint 
matrix, then one can define the random measure $\mu_{P(X_1^N,\dots,X_d^N)}$. In this case, Voiculescu's result \eqref{dv} implies that   
there exists a measure $\mu_P$ with compact support such that almost surely $\mu_{P(X_1^N,\dots,X_d^N)}$ converges weakly towards 
$\mu_P$ : it is given by $\mu_P(x^k)=\alpha(P^k)$ for all integer numbers $k$.\\

However, the convergence of the empirical measure of the eigenvalues of a matrix does not say anything about the local properties  of its 
spectrum, in particular about the convergence of the norm of this matrix, or the local fluctuations of its spectrum. However, when dealing with 
a single matrix, incredibly precise results are known. 
For exemple it is well-known that the largest eigenvalue of a GUE random matrix converges almost surely towards $2$. More precisely, if 
$X_N$ is a GUE random matrix of size $N$, then almost surely

$$ \lim_{N\to \infty} \norm{X_N} = 2 \. $$

\noindent The proof, for the more general case of a Wigner matrix with entries with finite moments,  was proved in \cite{monomat}. This result 
was obtained  under the optimal assumption that their   fourth moment is finite \cite{BY}. Concerning the GUE, much more precise results were obtained by Tracy and Widom in the early nineties in \cite{TW1}. The main result of their paper is the existence of a 
continuous decreasing 
function $F_2$ from $\R$ to $(0,1)$ such that with $\lambda_1(X^N)$ the largest eingenvalue of $X^N$,

$$ \lim_{N\to \infty} P\big(N^{2/3} (\lambda_1(X^N) - 2) \geq s\big) = F_2(s) \.$$
This was recently generalized to Wigner matrices \cite{Sosh,erdoyau, TV, LY} up to optimal hypotheses. One can as well study the 
localization of the eigenvalues in the bulk as well as their fluctuations \cite{ESY, erdoyau}. 

On the other hand, there are much less results available when one deals with a polynomial in several random matrices. In fact, up to today,  the only local 
fluctuations results concern perturbative polynomials \cite{FiGu} or local laws \cite{EKN} under some assumptions which are shown to hold 
for homogeneous polynomials of degree two.  However, impressive progress was made 
in  2005 by  Haagerup and Thorbj\o rnsen \cite{HT}: they proved the almost sure convergence 
of the norm of a polynomial evaluated in independent GUE matrices. For $P$ a self-adjoint polynomial, they proved that almost surely, for 
$N$ large enough, 
\begin{equation}
\label{spec}
	\sigma\left( P(X_1^N,\dots,X_d^N) \right) \subset \supp \mu_P + (-\varepsilon,\varepsilon) \,
\end{equation}

\noindent where $\sigma(H)$ is the spectrum of $H$ and $\supp \mu_P$ the support of the measure $\mu_P$. This is equivalent to saying 
that for any polynomial $P$, $\norm{P(X_1^N,\dots,X_d^N)}$ converges almost surely towards 
$\sup \left\{ |x|\ \middle| x\in \supp \mu_P \right\}$ (see proposition \ref{hausdorff}). The result \eqref{spec} was a major breakthrough in the 
context of free probability and was refined in multiple ways. In \cite{SCH}, Schultz used the method of \cite{HT} to prove the same result with 
Gaussian orthogonal or symplectic matrices instead of Gaussian unitary matrices. In \cite{CD}, Capitaine and Donati-Martin proved it for 
Wigner matrices under some technical hypothesis on the law of the entries. This result itself was then extended by Anderson in \cite{AND} to 
remove most of the technical assumption. In \cite{male}, Male made a conceptual improvement to the result of Haagerup and Thorbj\o rnsen, 
by allowing to work both with GUE and deterministic matrices. Finally, Collins and Male proved in \cite{collins_male} the same result with 
unitary Haar matrices instead of GUE matrices by using Male's former paper.

With the exception of \cite{collins_male}, all of these results are essentially based on the method introduced by Haagerup and Thorbj\o rnsen. 
Their first tool is called the linearization trick. The main idea is that given a polynomial $P$,  the spectrum of $P(X_1^N,\dots,X_d^N)$ is 
closely related to  the spectrum of 
$$ L_N = a_0\otimes I_N + \sum_{ i=1}^d a_i\otimes X^N_i \,$$

\noindent where $a_0\etc a_d$ are matrices of size $k$ depending only on $P$. Thus we trade a polynomial of degree $d$ with coefficient in 
$\C$ by a polynomial of degree $1$ with coefficient in $\M_{k(d)}(\C)$. The second idea to understand the spectrum of $L_N$ is to study a 
quantity similar to its Stieljes transform. It is defined on $\C\setminus \R\times\{0\}$ by

$$ G_{L_N} : z\mapsto \frac{1}{kN} \left(\tr_k\otimes\tr_N\right)\left(\left(L_N- \left( \begin{matrix}
z & 0 \\
0 & I_{k-1}
\end{matrix} \right)\otimes I_N \right)^{-1}\right) \.$$

\noindent The subsequent technical steps depend on the model of random matrix. The aim is to study $G_{L_N}$ for $z$ whose imaginary part as small as possible, that is of order $N^{-c}$ for some constant $c$.

An issue of this method is that it does not give easily good quantitative estimates. One aim of this paper is to remedy to this 
problem. We develop a new method that allows us to give a new proof of the main theorem of Male in \cite{male}, and thus a new proof of the 
result of Haagerup and Thorbj\o rnsen. Our approach requires neither the linearization trick, nor the study of the Stieljes transform and 
attacks the problem directly. In this sense the proof is more direct and less algebraic. We will apply it to a generalization of GUE matrices by  
tackling the case of GUE random matrices tensorized with deterministic matrices.

A usual strategy to study outliers, that are the eigenvalues going away from the spectrum,  is to study the \emph{non-renormalized} trace of 
smooth non-polynomial functions evaluated in independent GUE matrices i.e. if $P$ is self-adjoint:
$$\tr_N\left( f(P(X_1^N,\dots,X_d^N)) \right) \.$$
This strategy was also  used by Haagerup, Thorbj\o rnsen and Male,,
\noindent Indeed it is easy to see that if $f$ is a function which takes value $0$ on $(-\infty,C-\varepsilon]$, $1$ on $[C,\infty)$ and in $[0,1]$ 
elsewhere, then

$$ \P\Big( \lambda_1(P(X_1^N,\dots,X_d^N)) \geq C \Big) \leq\ \P\Big( \tr_{N}\left( f(P(X_1^N,\dots,X_d^N)) \right) \geq 1 \Big) $$

\noindent Hence, if we can prove that $\tr_{N}\left( f(P(X_1^N,\dots,X_d^N)) \right)$ converges towards $0$ in probability, this would already 
yield expected results. The above is just a well-known exemple, but one can get much more out of this strategy. 
Therefore, we need to study the non-
renormalized trace. The case where $f$ is a polynomial function has already been studied a long time ago, starting with the pioneering works 
\cite{CDu,GTCL}, and later formalized by the concept of second order freeness \cite{MingoSpeicher}. However here we have to deal with a 
function $f$ which is at best $C^{\infty}$. This makes things considerably more difficult and forces us to adopt a completely different 
approach. The main result is the following Theorem (for the notations, we refer to Section \ref{definit} -- for now, let us specify that 
$\frac{1}{N}\tr_N$ denotes the usual renormalized trace on $N\times N$ matrices whereas $\tau$ denotes its free limit):

\begin{theorem}
	\label{imp1}
	Let the following objects be given,
	\begin{itemize}
		\item $X^N = (X_1^N,\dots,X_d^N)$ independent $GUE$ matrices in $\M_N(\C)$,
		\item $x = (x_1,\dots,x_d)$ a system of free semicircular variable,
		\item $Z^{NM} = (Z_1^{NM},\dots,Z_q^{NM})$ deterministic matrices in $\M_N(\C)\otimes \M_M(\C)$,
		\item $P\in \C\langle X_1,\dots,X_{d+2q}\rangle_{sa}$ a self-adjoint polynomial,
		\item $f\in \mathcal{C}^6(\R)$.
	\end{itemize}
	
	\noindent Then there exists a polynomial $L_P$ which only depends on $P$ such that for any $N,M$,
	
	\begin{align*}
	\Bigg| &\E\left[\frac{1}{MN}\tr_{MN}\Big(f\left(P\left(X^N\otimes I_M,Z^{NM},{Z^{NM}}^*\right)\right)\Big)\right] - \tau_N\otimes\tau_M\Big(f\left(P\left(x\otimes I_M,Z^{NM},{Z^{NM}}^*\right)\right)\Big) \Bigg| \\
	&\leq \frac{M^2}{N^2} \norm{f}_{\mathcal{C}^6} L_P\left(\norm{Z^{NM}}\right) \,
	\end{align*}
	
	\noindent where $\norm{f}_{\mathcal{C}^6}$ is the sum of the supremum on $\R$ of the first six derivatives. Besides if 
	$Z^{NM} = (I_N\otimes Y_1^M,\dots ,I_N\otimes Y_q^M)$ and that these matrices commute, then we have the same inequality without 
	the $M^2$.
	
\end{theorem}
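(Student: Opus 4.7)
The plan is to interpolate smoothly between the matricial model $X^N$ and the free semicircular limit $x$, and to compare the infinitesimal generators on each side by pairing Gaussian integration by parts (Stein's lemma) on the matrix side with the free Schwinger--Dyson relation on the semicircular side. The two generators agree at leading order because the $1/N$-scaling of the GUE entries matches the semicircular identity, so the leading contributions cancel and what remains is a genuine second-order term of size $M^2/N^2$, which is the first correction in the genus expansion of a GUE.

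Concretely, I embed $X^N$, $Z^{NM}$ and a free semicircular family $x$ (free from the algebra generated by $X^N$ and $Z^{NM}$) in a common tracial $C^{*}$-probability space, and consider the free Ornstein--Uhlenbeck path
$$X^N(t) = e^{-t/2}X^N + \sqrt{1-e^{-t}}\; x, \qquad t \geq 0,$$
which interpolates between $X^N(0)=X^N$ and the free semicircular family as $t\to\infty$. Setting
$$\Phi(t) = \E\Bigl[(\tau_N \otimes \tau_M)\bigl(f(P(X^N(t)\otimes I_M, Z^{NM}, {Z^{NM}}^*))\bigr)\Bigr],$$
the target equals $\Phi(0)-\Phi(\infty) = -\int_0^{\infty} \Phi'(t)\; dt$. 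Computing $\Phi'(t)$ via Duhamel's identity produces an expression where the non-commutative derivatives of $P$ are contracted against $\partial_t X^N(t)$. Splitting this into the $X^N$-contribution and the $x$-contribution, Gaussian integration by parts on the former and the free Schwinger--Dyson relation on the latter produce matching leading terms that cancel, leaving a second-order residue built from two iterated non-commutative derivations weighted by divided differences of $f$.

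The quantitative $M^2/N^2$ factor arises because each Gaussian IBP on $X^N\otimes I_M$ combines a $1/N$ from the GUE covariance with an $\M_M(\C)$-valued contraction whose trace can be as large as $M$; iterating twice yields $M^2/N^2$, and the polynomial dependence on $\norm{Z^{NM}}$ records the operator norms of the non-commutative derivatives of $P$ that enter the residue. In the special case $Z^{NM}=(I_N\otimes Y_1^M,\dots,I_N\otimes Y_q^M)$ with mutually commuting $Y_j^M$, the two internal $M$-traces collapse into a single normalized trace bounded by $1$, so the factor $M^2$ is not produced; this is the improved second statement. The main obstacle, and the step I expect to require the most care, is a bookkeeping that keeps the regularity requirement on $f$ down to only six derivatives: each Duhamel expansion and each integration by parts brings down derivatives of $f$ through the non-commutative functional calculus and divided differences, and one needs a compact representation of $f$ --- for instance via a Fourier or Helffer--Sj\"ostrand identity with controlled polynomial weights --- so that exactly six derivatives suffice to absorb the two iterated Duhamel-plus-IBP cycles while keeping a polynomial dependence on $\norm{Z^{NM}}$.
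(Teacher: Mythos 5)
Your plan matches the paper's high-level strategy exactly: interpolate via the free Ornstein--Uhlenbeck flow $X^N(t)=e^{-t/2}X^N+\sqrt{1-e^{-t}}\,x$, differentiate, cancel the leading terms by pairing Gaussian integration by parts (matrix side) with the free Schwinger--Dyson identity (semicircular side), and then reduce the regularity demand on $f$ to six derivatives by a Fourier (characteristic-function) representation, since each Duhamel expansion raises a power of the Fourier variable $y$. The special case of commuting $Y_j^M$ tensored with $I_N$ is also handled in the paper exactly by the mechanism you indicate --- the two internal $M$-traces collapse into a single normalized trace.

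The step where your sketch and the paper diverge, and where the real work lies, is how the residual second-order term is bounded by $M^2/N^2$. After one round of Gaussian IBP and one application of the free Schwinger--Dyson relation, what remains (the quantity $\mathcal{S}_{N,t}^{\alpha,\beta}(A,B,C,D)$ in the paper's notation) is the difference between $\frac{1}{N}\sum_{r,s}\tau_N\otimes\tau_M(E_{s,r}\otimes I_M\, S_t\, E_{r,s}\otimes I_M\, V_t)$ and $\tau_M((\tau_N\otimes I_M)(S_t)\,(\tau_N\otimes I_M)(V_t))$, a covariance-type expression that vanishes at $t=0$ and is $O(1/N^2)$ at $t=\infty$ but must be controlled uniformly in $t\in(0,\infty)$. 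The paper does \emph{not} perform a second direct Gaussian IBP on this; instead it approximates the free semicircular family $x$ by large GUE matrices $R^{kN}$ (so that, as $k\to\infty$, the distribution of $(X^N\otimes I_k, R^{kN}, Z^{NM}\otimes I_k)$ converges to that of $(X^N,x,Z^{NM})$), rewrites the residual as a sum of covariances of scalar functionals of the $R^{kN}$, and bounds those covariances by the Gaussian Poincar\'e inequality. A further, separate lemma is then needed to show that an extra ``off-diagonal'' term ($k^3$ times a product of expectations against $P_{1,2}=I_N\otimes E_{1,2}\otimes I_M$) tends to zero as $k\to\infty$; this requires its own inductive Schwinger--Dyson-plus-Gronwall argument and is not visible from the heuristic of ``iterate IBP twice.'' Your picture of iterating the IBP/Duhamel cycle twice has the right scaling instinct, but if you try to carry it out literally you will face the difficulty that the free Schwinger--Dyson step destroys the Gaussian structure you would need for a second IBP at the same time $t$; the GUE-approximation-plus-Poincar\'e device is precisely what restores a Gaussian setting in which a variance bound can be run, and is the key idea your outline is missing.
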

This theorem is a consequence of the slightly sharper, but less explicit, Theorem \ref{imp2}. It is essentially the same statement, but instead 
of having the norm $C^6$ of $f$, we have the fourth moment of the Fourier transform of $f$. The above Theorem calls for a few   remarks.

\begin{itemize}
	\item We assumed that the matrices $Z^{NM}$ were deterministic, but thanks to Fubini's Theorem we can assume that they are random 
	matrices as long as they are independent from $X^N$. In this situation though, $L_P\left(\norm{Z^{NM}}\right)$ in the right side of the 
	inequality is a random variable (and thus we need some additional assumptions if we want its expectation to be finite for instance).
	\item In Theorems \ref{imp1} and \ref{imp2} we have $X^N\otimes I_M$ and $x\otimes I_M$, however it is very easy to replace them by 
	$X^N\otimes Y^M$ and $x\otimes Y^M$ for some matrices $Y^M_i\in M_M(\C)$. Indeed we just need to apply Theorem \ref{imp1} or 
	\ref{imp2} with $Z^{NM}=I_N\otimes Y^M$. Besides, in this situation, $L_P\left(\norm{Z^{NM}}\right) =  L_P\left(\norm{Y^M}\right)$ does 
	not depend on $N$. What this means is that if we have a matrix whose coefficients are polynomial in $X^N$, and that we replace $X^N$ 
	by $x$, we only change the spectra of this matrix by $M^2N^{-2}$ in average.
	\item Unfortunately we cannot get rid of the $M^2$ in all generality. The specific case where we can is when 
	$Z^{NM} = (I_N\otimes Y_1^M,\dots ,I_N\otimes Y_q^M)$, where the $Y_i^M$ commute: this shows that the $M^2$ term is really a 
	non-commutative feature. 
\end{itemize}

A detailed overview of the proof is given in Subsection \ref{overview}. The main idea of the proof is to interpolate $GUE$ matrices and a free 
semicircular system with the help of a free Ornstein-Uhlenbeck process. For a reference, see \cite{BianeSpeicher2001}. When using this 
process, the Schwinger-Dyson equations, which can be seen as an integration by part, appear in the computation. For more information 
about these equations we refer to Proposition \ref{SD}. As we will see, they will play a major role in this paper. Theorem \ref{imp1} is the crux 
of the paper and allows us to deduce many corollaries. Firstly we rederive a new proof of the following theorem. The first statement is 
basically Theorem 1.6 from \cite{male}. The second one is an improvement of Theorem 7.8 from \cite{pisier} on the size of the tensor  from 
$N^{1/4}$ to $N^{1/3}$. This theorem is about strong convergence of random matrices, that is the convergence of the norm of polynomials in 
these matrices, see definition \ref{freeprob}.

\begin{theorem}
	\label{strongconv}
	Let  the  following objects be given:
	\begin{itemize}
		\item $X^N = (X_1^N,\dots,X_d^N)$ independent $GUE$ matrices of size $N$,
		\item $x = (x_1,\dots,x_d)$ a system of free semicircular variable,
		\item $Y^M = (Y_1^M,\dots,Y_p^M)$ random matrices of size $M$, which almost surely, as $M$ goes to infinity, converges strongly 
		in distribution towards a $p$-tuple $y$ of non-commutative random variables in a $\mathcal C^*$- probability space $\B$ with a 
		faithful trace $\tau_{\B}$.
		\item $Z^N = (Z_1^N,\dots,Z_q^N)$ random matrices of size $N$, which almost surely, as $N$ goes to infinity, converges strongly 
		in distribution towards a $q$-tuple $z$ of non-commutative random variables in a $\mathcal C^*$- probability space  with a faithful 
		trace,
	\end{itemize}
	
	\noindent then the following holds true.
	
	\begin{itemize}
		\item If $X^N$ and $Z^N$ are independent, almost surely, $(X^N, Z^N)$ converges strongly in distribution towards 
		$\mathcal{F} = (x,z)$, where $\mathcal{F}$ belongs to a $\mathcal C^*$- probability space $(\A,*,\tau_{\A},\norm{.})$ in which $x$ 
		and $z$ are free.
		\item If $X^N$ and $Y^{M_N}$ are independent and $M_N = o(N^{1/3})$, almost surely, 
		$(X^N\otimes I_{M_N}, I_N\otimes Y^{M_N})$ converges strongly in distribution towards $\mathcal{F} = (x\otimes 1, 1\otimes y)$. 
		The family $\mathcal{F}$ thus belongs to $\A\otimes_{\min}\B$ (see definition \ref{mini}). Besides if the matrices $Y^{M_N}$ 
		commute, then we can weaken the assumption on $M_N$ by only assuming that $M_N = o(N)$.
	\end{itemize}
	
\end{theorem}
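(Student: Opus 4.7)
The plan is to deduce Theorem \ref{strongconv} from Theorem \ref{imp1} by a smooth-cutoff argument, combined with Gaussian concentration of measure on the GUE to pass to almost sure statements. By Proposition \ref{hausdorff}, strong convergence reduces, for each self-adjoint polynomial $P$, to asymptotic containment of the spectrum of $P$ evaluated at the relevant matrix tuple in each $\varepsilon$-neighborhood of $\supp(\mu_P)$. Convergence of traces of polynomials (i.e.\ convergence of the non-commutative distribution) follows by applying Theorem \ref{imp1} to smooth extensions of monomials and then concentrating; this in particular yields the matching lower bound $\liminf \norm{P(\ldots)} \geq \norm{P(x \otimes 1, \ldots)}$ via $(\tr P^{2k})^{1/(2k)} \to \norm{P(x)}$ as $k \to \infty$. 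So the heart of the proof is the matching upper bound on the spectrum.

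For that, fix $\varepsilon > 0$ and pick $f \in \CC^\infty(\R)$ with $0 \leq f \leq 1$, vanishing on $\supp(\mu_P) + (-\varepsilon/2, \varepsilon/2)$ and identically $1$ outside $\supp(\mu_P) + (-\varepsilon, \varepsilon)$. Since $f$ annihilates the free limit, Theorem \ref{imp1} gives
\begin{equation*}
    \E\!\left[\frac{1}{MN}\tr\!\bigl(f(P(X^N \otimes I_M, Z^{NM}, {Z^{NM}}^*))\bigr)\right] \leq \frac{M^2}{N^2}\, \norm{f}_{\CC^6}\, L_P(\norm{Z^{NM}}),
\end{equation*}
and hence $\E[\tr(f(P))] \leq M^3 N^{-1}\, \norm{f}_{\CC^6}\, L_P(\norm{Z^{NM}})$. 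The number of eigenvalues escaping the $\varepsilon$-neighborhood is bounded by $\tr(f(P))$, so the expected number tends to $0$ whenever $M = o(N^{1/3})$; in the commuting case the $M^2$ factor drops out and the weaker condition $M = o(N)$ suffices, matching the stated hypotheses. For the first statement of the theorem ($M = 1$) one obtains $\E[\tr(f(P))] = O(1/N)$.

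To upgrade to almost sure convergence, I would invoke Gaussian concentration for the GUE. The map $X^N \mapsto \tr(f(P(X^N \otimes I_M, Z^{NM}, {Z^{NM}}^*)))$ is locally Lipschitz in the Hilbert--Schmidt norm on the event $\{\norm{X^N} \leq 3\}$ (whose complement has exponentially small probability by standard bounds on $\norm{X^N}$), so Borell--Tsirelson's inequality yields tight concentration of $\tr(f(P))$ about its mean. Combined with the expectation bound and a Borel--Cantelli argument (possibly along a subsequence, then interpolated using Lipschitz continuity of $\norm{P(X^N)}$ in $X^N$), this delivers the almost sure absence of escaping eigenvalues. Randomness of $Z^N$ (resp.\ $Y^{M_N}$) is handled via Fubini, conditioning on a generic realization and using that strong convergence of $Z^N$ (resp.\ $Y^{M_N}$) implies almost sure boundedness of the norms entering $L_P$.

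The main obstacle, and the source of the sharp exponent $M = o(N^{1/3})$, is the $M^2$ factor appearing in Theorem \ref{imp1}: this translates into $M^3$ in the unnormalized trace bound against the $N^{-1}$ gain, so that $M^3 \ll N$ is the natural barrier. The improvement over Pisier's $N^{1/4}$ in \cite{pisier} is precisely this sharper tracking of the $M$-dependence. A secondary technical subtlety is that the Lipschitz constant of $\tr(f(P))$ grows with $M$, so one must verify that the Gaussian concentration remains strong enough to be summable throughout the admissible range of $M$.
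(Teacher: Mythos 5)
Your high-level strategy — smooth cutoff supported outside the spectrum of the free limit, feed it into Theorem \ref{imp1}, and bound the unnormalized trace to control escaping eigenvalues — is indeed the paper's strategy for the expectation estimate. However, the step you use to upgrade from convergence in expectation to almost sure convergence contains a genuine gap, and it is precisely the step the paper handles with Proposition \ref{lips}.

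You propose to apply Gaussian concentration (Borell--TIS) to the map $X^N \mapsto \tr\bigl(f(P(X^N\otimes I_M, Z^{NM}))\bigr)$ and claim this gives "tight" concentration. It does not. On the event $\{\norm{X_i^N}\leq 3\}$, Lidskii plus Hoffman--Wielandt give
$$|\tr(f(A))-\tr(f(B))|\le \norm{f'}_\infty\,\norm{A-B}_1\le \norm{f'}_\infty\sqrt{MN}\,\norm{A-B}_{HS},$$
and in terms of the underlying unit-variance Gaussian vector $\gamma$ one has $\norm{X^N-\tilde X^N}_{HS}\le cN^{-1/2}\norm{\gamma-\tilde\gamma}_2$ with an extra $\sqrt M$ from $\otimes I_M$. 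Tracking these factors, the Lipschitz constant of $\gamma\mapsto\tr(f(P))$ is of order $M$ — the $\sqrt{MN}$ from the trace/HS comparison exactly cancels the $N^{-1/2}$ gain from the Gaussian scaling. Hence the resulting tail bound is $\P(|\tr f(P)-\E\tr f(P)|\ge \delta)\lesssim e^{-c\delta^2/M^2}$, which for $\delta$ of order $1$ (the relevant scale: one escaping eigenvalue) does not even tend to zero, let alone sum. Passing to a subsequence does not help, since the bound is uniformly bounded away from $0$. The "secondary technical subtlety" you flag is in fact the obstruction, and it kills this route.

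The paper circumvents this by concentrating not $\tr(f(P))$ but the operator norm $\norm{PP^*(X^N\otimes I_M,\ldots)}$ itself (Proposition \ref{lips}). The operator norm of a polynomial is $O(1)$-Lipschitz in the operator norm of the $X_i^N$, which in turn is bounded by the Hilbert--Schmidt norm, yielding a Lipschitz constant $O(N^{-1/2})$ in $\gamma$; there is no $\sqrt{MN}$ loss because one never passes through the trace norm. This gives concentration at rate $e^{-c\delta^2 N}$, which is summable and feeds into Borel--Cantelli. The paper therefore first proves $\lim_{N}\E[\norm{PP^*}] = \norm{PP^*(x\otimes 1,\ldots)}$ (upper bound from your cutoff argument via Markov, lower bound from the a.s.\ $\liminf$ via convergence in distribution and faithfulness of $\tau_\A\otimes_{\min}\tau_\B$, then Fatou), and only then concentrates the norm around its mean.

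Two smaller omissions: (i) you need to identify $\tau_N\otimes\tau_M\bigl(f(P(x\otimes I_M, I_N\otimes Y^M))\bigr)$ with the corresponding quantity for the claimed limit $(x\otimes 1, 1\otimes y)$; the paper establishes strong convergence of $(x\otimes I_M, 1\otimes Y^M)\to(x\otimes 1,1\otimes y)$ via Lemma \ref{tensorconv} (exactness of $\mathcal C^*(x)$), which is not automatic and must be invoked. (ii) Your lower bound via $(\tr P^{2k})^{1/2k}$ implicitly uses faithfulness of $\tau_\A\otimes_{\min}\tau_\B$ (Lemma \ref{faith}); otherwise $\lim_k\tau(P^{2k})^{1/2k}$ would only give the $L^\infty(\tau)$-seminorm, not the $C^*$-norm.
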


As we discussed earlier, understanding the Stieljes transform of a matrix gives a lot of information about its spectrum. This was actually a 
very important point in the proof of Haagerup and Thorbj{\o}rnsen's Theorem. Our proof does not use this tool, however our final result, 
Theorem \ref{imp2}, allows us to deduce the following estimate with sharper constant than what has previously been done. Being given a self-
adjoint $NM\times NM$ matrix, we denote by $G_A$ its Stieltjes transform:
$$G_A(z)=\frac{1}{NM}\tr_{NM}\left(\frac{1}{z-A}\right)\.$$
This definition extends to the tensor product of free semi-circular variables by replacing $\tr_{NM}$ by $\tau_N\otimes \tau_M$. 

\begin{cor}
	\label{stieljes}
	
	Given
	\begin{itemize}
		\item $X^N = (X_1^N,\dots,X_d^N)$ independent $GUE$ matrices of size $N$,
		\item $x = (x_1,\dots,x_d)$ a system of free semicircular variable,
		\item $Y^M = (Y_1^M,\dots,Y_p^M,{Y_1^M}^*,\dots,{Y_p^M}^*)$ deterministic matrices of size $M$ a fixed integer and their 
		adjoints,
		\item $P\in \C\langle X_1,\dots,X_d,Y_1,\dots,Y_{2p}\rangle_{sa}$ a self-adjoint polynomial,
	\end{itemize}
	\noindent there exists a polynomial $L_P$ such that for every $Y^M$, $z\in \C\backslash \R$, $N\in \N$,
	
	$$ \left| \E\left[ G_{P(X^N\otimes I_M, I_N\otimes Y^M)}(z) \right] - G_{P(x\otimes I_M, I_N\otimes Y^M)}(z) \right| \leq L_P\left(\norm{Y^M}\right) \frac{M^2}{N^2} \left( \frac{1}{ \left| \Im(z)\right|^5} + \frac{1}{ \left| \Im(z)\right|^2} \right) \.$$
\end{cor}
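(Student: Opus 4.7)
The plan is to apply the sharper version of Theorem \ref{imp1}, namely Theorem \ref{imp2}, to the family of oscillating exponentials $f_t(\lambda) = e^{-it\lambda}$, and to recover the resolvent of $P$ by integrating in $t$. Write $P^N := P(X^N\otimes I_M, I_N\otimes Y^M)$ and $P^\infty := P(x\otimes I_M, I_N\otimes Y^M)$; both are self-adjoint by assumption. For $\Im(z)>0$ the elementary identity
$$\frac{1}{z-\lambda} = -i\int_0^\infty e^{i(z-\lambda)t}\,dt,\qquad \lambda\in\mathbb{R},$$
holds with the integrand decaying uniformly like $e^{-\Im(z)t}$. Lifting it by functional calculus and exchanging integral and trace, one obtains
$$\E[G_{P^N}(z)] - G_{P^\infty}(z) = -i\int_0^\infty e^{izt}\,\Delta(t)\,dt,$$
where
$$\Delta(t) := \E\!\left[\tfrac{1}{NM}\tr_{NM}\!\left(e^{-itP^N}\right)\right] - \tau_N\otimes\tau_M\!\left(e^{-itP^\infty}\right).$$

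The heart of the argument is to bound $|\Delta(t)|$ using Theorem \ref{imp2}. Morally, $\widehat{f_t}$ is a point mass at $-t$, so the $k$-th Fourier moment of $f_t$ equals $|t|^k$; plugging $f_t$ into Theorem \ref{imp2} should give an estimate of the form
$$|\Delta(t)| \leq \frac{M^2}{N^2}\,L_P(\norm{Y^M})\,(|t| + |t|^4),$$
where the factor $|t|$ rather than $1$ reflects the elementary vanishing $\Delta(0)=0$ and corresponds to pulling one derivative out of the free Ornstein--Uhlenbeck interpolation that underlies Theorem \ref{imp2}. Substituting this into the integral above and using the standard formula $\int_0^\infty t^k e^{-\Im(z)t}dt = k!/\Im(z)^{k+1}$ yields
$$|\E[G_{P^N}(z)] - G_{P^\infty}(z)| \leq \frac{M^2}{N^2}\,L_P(\norm{Y^M})\!\left(\frac{1}{\Im(z)^2} + \frac{24}{\Im(z)^5}\right),$$
and the constant $24$ can be absorbed into $L_P$. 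The case $\Im(z)<0$ then follows from the symmetry $G_A(\bar z) = \overline{G_A(z)}$, valid since $P^N$ and $P^\infty$ are self-adjoint.

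The main obstacle is the formal justification of applying Theorem \ref{imp2} to $f_t(\lambda)=e^{-it\lambda}$: this function is smooth and bounded but has only a distributional Fourier transform, so it does not fit literally into the hypothesis of the theorem (phrased via an honest integrable Fourier transform or a $\mathcal{C}^6$ norm). The cleanest fix is to revisit the proof of Theorem \ref{imp2}, which produces $\Delta(t)$ directly as the output of the interpolation and delivers a bound polynomial in $t$ without ever performing Fourier inversion on $f_t$ itself. A more pedestrian alternative is to approximate $f_t$ by Schwartz functions whose Fourier transforms concentrate near $\{-t\}$ and pass to the limit, keeping the $t$-dependence uniform; this route is routine once the polynomial structure of the estimate in $t$ has been extracted.
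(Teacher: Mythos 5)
Your approach is in substance the same as the paper's, but you have introduced an unnecessary detour that creates the obstacle you then struggle with. You write $\E[G_{P^N}(z)]-G_{P^\infty}(z)=-i\int_0^\infty e^{izt}\Delta(t)\,dt$ and then try to bound each $\Delta(t)$ by applying Theorem~\ref{imp2} to the oscillating exponential $f_t(\lambda)=e^{-it\lambda}$, which indeed does not satisfy the hypotheses of that theorem. But the obstacle dissolves if you simply do not separate the two integrals: the function $f_z(\lambda)=(z-\lambda)^{-1}$ is \emph{itself} of the required Fourier form. Taking (say) $\Im z<0$, one has the honest identity
$$
f_z(\lambda)=\int_0^\infty e^{\i\lambda y}\,\bigl(\i e^{-\i yz}\bigr)\,dy,
$$
so $f_z(\lambda)=\int_{\R}e^{\i\lambda y}\,d\mu_z(y)$ with the complex measure $\mu_z(dy)=\i e^{-\i yz}\,\1_{y>0}\,dy$, whose total variation $|\mu_z|(dy)=e^{y\Im z}\,\1_{y>0}\,dy$ has all moments finite precisely because $\Im z\neq 0$. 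Applying Theorem~\ref{imp2} once to this $f_z$ and computing $\int_0^\infty(y+y^4)e^{y\Im z}\,dy=|\Im z|^{-2}+24|\Im z|^{-5}$ gives the stated bound directly; the case $\Im z>0$ follows from $\overline{G_A(\bar z)}=G_A(z)$. This is exactly what the paper does. Your two proposed fixes (reopening the proof of Theorem~\ref{imp2} to extract a pointwise-in-$t$ bound on $\Delta(t)$, or mollifying $f_t$) both work but are more effort than necessary, since $\mu_z$ is already an admissible measure. One small point worth noting: Theorem~\ref{imp2} is stated for $f:\R\to\R$ and $f_z$ is complex-valued, but the estimate is linear in $f$, so one may either split $f_z$ into real and imaginary parts or observe that the proof applies verbatim to complex $\mu$; the paper silently uses the latter.
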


One of the limitation of Theorem \ref{imp1} is that we need to pick $f$ regular enough. Actually by approximating $f$, we can afford to take 
$f$ less regular at the cost of a slower speed of convergence. In other words, we trade some degree of regularity on $f$ for a smaller 
exponent in $N$. The best that we can achieve is to take $f$ Lipschitz. Thus it makes sense to introduce the Lipschitz-bounded metric. This 
metric is compatible with the topology of the convergence in law of measure. Let $\mathcal{F}_{LU}$ be the set of Lipschitz function from 
$\R$ to $\R$, uniformly bounded by $1$ and with Lipschitz constant at most $1$, then

$$ d_{LU}(\mu,\nu) = \sup_{f\in \mathcal{F}_{LU}} \left| \int_{\R} f d\mu - \int_{\R} f d\nu \right| \.$$

\noindent For more information about this metric we refer to Annex C.2 of \cite{alice}. In this paper, we get the following result:

\begin{cor}
	\label{LU}
	
	Under the same notations as in Corollary \ref{stieljes}, there exists a polynomial $L_P$ such that for every matrices $Y^M$ and $M,N\in \N$,	
	$$ d_{LU}\left(\E[\mu_{P(X^N\otimes I_M, I_N\otimes Y_M)}] , \mu_{P(x\otimes I_M, I_N\otimes Y_M)}\right) \leq L_P\left(\norm{Y^M}\right) \frac{M^2}{N^{1/3}} \.$$
	
\end{cor}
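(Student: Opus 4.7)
The plan is a standard mollification argument that trades regularity of $f$ for powers of $N$ in the bound provided by Theorem \ref{imp1}. Fix once and for all a non-negative bump function $\phi\in \CC^\infty_c(\R)$ supported in $[-1,1]$ with $\int \phi = 1$, and set $\phi_\varepsilon(x)=\varepsilon^{-1}\phi(x/\varepsilon)$. Given any $f\in \mathcal{F}_{LU}$, the mollified function $f_\varepsilon := f*\phi_\varepsilon$ is smooth and satisfies two key estimates: since $f$ is $1$-Lipschitz, $\|f-f_\varepsilon\|_\infty \leq \int |y| \phi_\varepsilon(y)\,dy \leq \varepsilon$; and since, for $k\geq 1$, $(f_\varepsilon)^{(k)} = f'*\phi_\varepsilon^{(k-1)}$ with $\|f'\|_\infty \leq 1$ almost everywhere, Young's convolution inequality gives
$$\|(f_\varepsilon)^{(k)}\|_\infty \leq \|f'\|_\infty\, \|\phi_\varepsilon^{(k-1)}\|_1 = \varepsilon^{-(k-1)} \|\phi^{(k-1)}\|_1,$$
hence $\|f_\varepsilon\|_{\CC^6}\leq C(1+\varepsilon^{-5})$ for a universal constant $C$ depending only on $\phi$. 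Crucially, both of these bounds are uniform in $f\in \mathcal{F}_{LU}$.

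The triangle inequality then reads, for any probability measures $\mu,\nu$ on $\R$,
$$\left|\int f \, d\mu - \int f \, d\nu\right| \leq 2\varepsilon + \left|\int f_\varepsilon \, d\mu - \int f_\varepsilon \, d\nu\right|.$$
Applying this with $\mu = \E[\mu_{P(X^N\otimes I_M, I_N\otimes Y^M)}]$ and $\nu = \mu_{P(x\otimes I_M, I_N\otimes Y^M)}$, and noting by Fubini that $\int f_\varepsilon \, d\mu = \E\bigl[(MN)^{-1}\tr_{MN}(f_\varepsilon(P(X^N\otimes I_M, I_N\otimes Y^M)))\bigr]$, Theorem \ref{imp1} applied to $f_\varepsilon$ bounds the second term by
$$\frac{M^2}{N^2}\,\|f_\varepsilon\|_{\CC^6}\, L_P(\|Y^M\|) \;\leq\; C\, L_P(\|Y^M\|)\,\frac{M^2}{N^2}(1+\varepsilon^{-5}).$$

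It remains only to optimize: choosing $\varepsilon=N^{-1/3}$ makes the first term equal to $2N^{-1/3}$, while the second becomes at most $2C L_P(\|Y^M\|)\, M^2 N^{-1/3}$ for $N\geq 1$. Taking the supremum over $f\in \mathcal{F}_{LU}$ and absorbing numerical constants into $L_P$ yields the claimed bound on $d_{LU}$. There is no serious obstacle here; the only point deserving attention is that the $\CC^6$ bound on $f_\varepsilon$ depends on $f$ only through $\|f\|_\infty$ and the Lipschitz constant of $f$, both of which are uniformly controlled on $\mathcal{F}_{LU}$, so that the application of Theorem \ref{imp1} is legitimately uniform in $f$. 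One can of course optimize $\varepsilon$ more carefully to obtain the sharper rate $M^{1/3}N^{-1/3}$, but the stated exponent $N^{-1/3}$ is the natural one once the factor $M^2$ is kept explicit.
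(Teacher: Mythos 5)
Your proof is correct and follows essentially the same mollification-plus-optimization strategy as the paper: replace $f\in\mathcal{F}_{LU}$ by a smooth approximant $f_\varepsilon$ with $\|f-f_\varepsilon\|_\infty\le\varepsilon$, apply the main quantitative theorem, and take $\varepsilon=N^{-1/3}$. The paper's proof differs only in two cosmetic respects: it mollifies with a Gaussian kernel (so that the Fourier transform is explicit), and it feeds the resulting Fourier-side measure directly into Theorem \ref{imp2} rather than invoking the $\mathcal{C}^6$ bound of Theorem \ref{imp1}; either route produces the same $\varepsilon^{-5}$ factor and hence the same exponent $N^{-1/3}$.
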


One of the advantage of Theorem \ref{imp1} over the original proof of Haagerup and Thorbj\o rnsen is that if we take $f$ which depends on 
$N$, we get sharper estimates in $N$. For exemple if we assume that $g$ is a $\mathcal{C}^{\infty}$ function with bounded support, as we 
will see later in this paper we like to work with $f: x\mapsto g(N^{\alpha}x)$ for some constant $\alpha$. Then its $n$-th derivative will be of 
order $N^{n\alpha}$. In the original work of Haagerup, Thorbj\o rnsen (see \cite{HT}, Theorem 6.2) the eighth derivative appears for the 
easiest case where our polynomial $P$ is of degree $1$, and the order is even higher in the general case. But in Theorem \ref{imp1} the sixth 
derivative appears in the general case. Actually if we look at the sharper Theorem \ref{imp2}, the fourth moment of the Fourier transform 
appears, which is roughly equivalent to the fourth derivative for our computations. This allows us to compute an estimate of the difference 
between $\E\left[ \norm{P(X^N\otimes I_M, I_N\otimes Y^M)} \right]$ and its limit. To do that, we use Proposition \ref{tail}  from 
\cite[Theorem 1.1]{SS15} which implies that if we denote by 
$\mu_{P(x\otimes I_M, 1\otimes Y^M)} $ the spectral measure of $P(x\otimes I_M, 1\otimes Y^M)$, then there exists $\beta\in\R^+$ such that
	
	\begin{equation}\label{taileq} \limsup\limits_{\varepsilon\to 0}\quad \varepsilon^{-\beta}\mu_{P(x\otimes I_M, 1\otimes Y^M)} \left( \left[\norm{P(x\otimes I_M, 1\otimes Y^M)}-\varepsilon, \norm{P(x\otimes I_M, 1\otimes Y^M)}\right] \right) > 0 \. \end{equation}
With the help of standard measure concentration estimates, we then get the following Theorem:

\begin{theorem}
	\label{concentration}
	We consider
	\begin{itemize}
		\item $X^N = (X_1^N,\dots,X_d^N)$ independent $GUE$ matrices of size $N$,
		\item $x = (x_1,\dots,x_d)$ a system of free semicircular variable,
		\item $Y^M = (Y_1^M,\dots,Y_p^M)$ deterministic matrices of size $M$ a fixed integer and their adjoints.
	\end{itemize}
	
	 Almost surely, for any polynomial $P\in \C\langle X_1,\dots,X_{d},Y_1,\dots,Y_p\rangle$, there exists constants $K$ and $C$ such that for any $\delta>0$,
	
	\begin{equation}
	\label{upper}
	\P\left( N^{1/4} \left( \norm{P(X^N\otimes I_M, I_N\otimes Y^M)} - \norm{P(x\otimes I_M, 1\otimes Y^M)} \right) \geq \delta + C \right) \leq e^{-K \delta^2 \sqrt{N}} + d e^{-N}\,
	\end{equation}
	
	\begin{equation}
	\label{lower}
	\P\left( N^{1/(3+\beta)} \left( \norm{P(X^N\otimes I_M, I_N\otimes Y^M)} - \norm{P(x\otimes I_M, 1\otimes Y^M)} \right) \leq -\delta - C \right) \leq e^{-K \delta^2 N^{\frac{1+\beta}{3+\beta}}} + d e^{-N} \.
	\end{equation}
	
\end{theorem}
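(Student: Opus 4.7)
Write $\mathcal{N}(X):=\norm{P(X\otimes I_M,I_N\otimes Y^M)}$ and $a:=\norm{P(x\otimes I_M,1\otimes Y^M)}$. The plan is to bound $\E[\mathcal{N}(X^N)]-a$ separately from above and below by applying Theorems \ref{imp1}--\ref{imp2} to carefully chosen smooth cut-offs, and then upgrade these expectation bounds into tail bounds by applying Gaussian concentration to the Lipschitz map $X\mapsto\mathcal{N}(X)$. For \eqref{upper}, I would fix a smooth $g_t:\R\to[0,1]$ vanishing on $(-\infty,a]$ and equal to $1$ on $[a+t,\infty)$, chosen so that the fourth Fourier moment controlled by Theorem \ref{imp2} is of order $t^{-4}$. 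Since $\supp \mu_{P(x\otimes I_M, 1\otimes Y^M)}\subset[-a,a]$, the free quantity $\tau_N\otimes\tau_M(g_t(P(x\otimes I_M,1\otimes Y^M)))$ vanishes, so Theorem \ref{imp2} combined with Markov gives $\P(\mathcal{N}(X^N)\geq a+t)\leq\E[\tr_{MN}(g_t(P(X^N\otimes I_M, I_N\otimes Y^M)))]\leq CM^{3}/(Nt^{4})$. Integrating this tail in $t$ and optimising produces $\E[\mathcal{N}(X^N)]\leq a+CN^{-1/4}$. Because $X\mapsto\mathcal{N}(X)$ is Lipschitz in the Hilbert--Schmidt norm with a constant depending only on $P,M,\norm{Y^M}$ on the event $\{\norm{X^N}\leq R\}$ (whose complement has probability at most $de^{-N}$ by standard GUE deviation bounds), Gaussian concentration for GUE then yields $\P(\mathcal{N}(X^N)\geq a+(\delta+C)N^{-1/4})\leq 2e^{-c\delta^{2}\sqrt{N}}+de^{-N}$, which is precisely \eqref{upper}.

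For \eqref{lower} the approach is similar in spirit but harder. I would fix a smooth bump $h_t\geq 0$ supported in $(a-t,a)$ and equal to $1$ on a constant fraction of that interval, with $\norm{h_t}_{\mathcal{C}^{6}}\lesssim t^{-6}$. The edge-mass estimate \eqref{taileq} yields $\tau_N\otimes\tau_M(h_t(P(x\otimes I_M, 1\otimes Y^M)))\geq ct^{\beta}$ along a sequence of scales $t\to 0$, which is sufficient. Theorem \ref{imp1} then provides $\E[\tr_{MN}(h_t(P(X^N\otimes I_M,I_N\otimes Y^M)))]\geq cMNt^{\beta}-CM^{3}/(Nt^{6})\gtrsim MNt^{\beta}$ as soon as $t\gtrsim N^{-2/(6+\beta)}$. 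The map $F(X):=\tr_{MN}(h_t(P(X\otimes I_M, I_N\otimes Y^M)))$ is Lipschitz in the HS norm with constant $L_F\lesssim t^{-1}M\sqrt{N}$ (the $\sqrt{M}$ factor stems from $\norm{A\otimes I_M}_2=\sqrt{M}\norm{A}_2$, combined with $\norm{\cdot}_1\leq\sqrt{MN}\norm{\cdot}_2$ and $\norm{h_t'}_\infty\lesssim t^{-1}$). Since $\{\mathcal{N}(X^N)\leq a-t\}\subset\{F(X^N)=0\}$, Gaussian concentration applied to $F$ gives $\P(\mathcal{N}(X^N)\leq a-t)\leq 2\exp(-cN^{2}t^{2+2\beta})+de^{-N}$, whence integration produces $\E[\mathcal{N}(X^N)]\geq a-CN^{-1/(3+\beta)}$. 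A final application of Gaussian concentration to $\mathcal{N}(X^N)$ itself at scale $r=\delta N^{-1/(3+\beta)}$ produces the exponent $Nr^{2}=\delta^{2}N^{(1+\beta)/(3+\beta)}$, and combined with the expectation estimate this is precisely \eqref{lower}.

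The main obstacle is the lower tail: one must convert the essentially subsequential edge-mass estimate \eqref{taileq} into a quantitative lower bound on $\tau(h_t(P))$ at the specific scale $t\sim N^{-1/(3+\beta)}$ forced by balancing the error term in Theorem \ref{imp1} (driven by $\norm{h_t}_{\mathcal{C}^6}$), the fluctuation of $\tr(h_t(P))$ around its mean (which demands that the signal $MNt^\beta$ dominate $L_F$), and the final fluctuation of $\mathcal{N}(X^N)$. A secondary delicate point is the careful book-keeping of the $\sqrt{M}$ amplification inherent to the tensor structure when computing the Lipschitz constant of $F$; this factor, together with the $\sqrt{MN}$ cost of passing from Schatten-$2$ to Schatten-$1$ norm, is precisely what determines the exponent $(1+\beta)/(3+\beta)$ in \eqref{lower}.
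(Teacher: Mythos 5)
Your overall strategy coincides with the paper's: obtain an expectation bound $\E[\norm{P(X^N\otimes I_M,\dots)}]\leq \norm{P(x\otimes 1,\dots)}+O(N^{-1/4})$ (resp.\ $\geq\dots-O(N^{-1/(3+\beta)})$) by applying Theorem \ref{imp1}/\ref{imp2} to a smooth cut-off localised at the edge, and then upgrade it to a tail bound by applying Gaussian concentration to the operator norm, which is Lipschitz in the entries with constant $O(N^{-1/2})$ on a high-probability event. The only meaningful tactical differences are in how the expectation bound is produced: for the upper tail you use Markov's inequality directly on $\E[\tr(g_t(\cdot))]$, whereas the paper refines this with a second-moment (Poincar\'e/Chebyshev) bound, and for the lower tail you invoke exponential concentration of the trace of the cut-off whereas the paper uses Chebyshev with a crude variance bound $\lesssim\varepsilon^{-2}/N^{2}$; both routes lead to the same exponents.

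Two issues should be flagged. First, the theorem allows arbitrary (not necessarily self-adjoint) $P\in\C\langle X_1,\dots,Y_p\rangle$, so speaking of $\supp\mu_{P(x\otimes I_M,1\otimes Y^M)}\subset[-a,a]$ and of a spectral cut-off $g_t(P(\cdot))$ is not immediately legitimate: you need to pass to $P^*P$ (self-adjoint, non-negative), prove the estimate for $\norm{P^*P(\cdot)}$, and then translate back using $\norm{P}=\norm{P^*P}^{1/2}$. The paper does exactly this, and the translation contributes to the constant $C$. Second, and more substantively, you correctly identify as "the main obstacle" that \eqref{taileq} only asserts a $\limsup$ over $\varepsilon\to 0$, which does not in itself give $\rho([a-\varepsilon,a])\gtrsim\varepsilon^{\beta}$ at the particular $N$-dependent scale $\varepsilon\sim N^{-1/(3+\beta)}$ that the balancing of errors forces. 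Your parenthetical remark that a subsequential estimate "is sufficient" is not correct as stated; one genuinely needs a pointwise power-law lower bound for all small $\varepsilon$. The paper resolves this by going back to the analytic structure theorem of Shlyakhtenko--Skoufranis (Proposition \ref{tail}), which describes the density near each endpoint of the spectrum as a convergent Puiseux series, and hence shows that $\rho([a-\varepsilon,a])$ behaves like $\varepsilon^{\beta}$ (or has an atom) for \emph{all} small $\varepsilon$, not just along a sequence. Without invoking that stronger statement, the chain of estimates on the lower tail does not close.

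A minor internal inconsistency: you assert that $F(X)=\tr_{MN}(h_t(P(X\otimes I_M,\dots)))$ is Lipschitz in the Hilbert--Schmidt norm of $X$ with constant $\lesssim t^{-1}M\sqrt N$, and then claim the concentration bound $\exp(-cN^{2}t^{2+2\beta})$. To land on $N^{2}$ rather than $N$ one must first convert the HS-Lipschitz constant to a Lipschitz constant in the underlying standard Gaussian coordinates, which costs an extra factor $N^{-1/2}$ coming from the GUE normalisation, so the effective Lipschitz constant is $\lesssim t^{-1}M$. The stated exponent is the right one; the stated Lipschitz constant, as phrased, is not the one that enters the concentration inequality. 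You should make the normalisation explicit to avoid the appearance of a mismatch.
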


This theorem is interesting because of its similarity with Tracy and Widom's result about the tail of the law of the largest eingenvalue of a GUE 
matrix. We have smaller exponent in $N$, and thus we can only show the convergence towards $0$ with exponential speed, however we are 
not restricted to a single GUE matrix, we can chose any polynomial evaluated in GUE matrices. Besides by applying Borel-Cantelli's Lemma, 
we immediately get:

\begin{theorem}
	\label{meso}
	We consider
	\begin{itemize}
		\item $X^N = (X_1^N,\dots,X_d^N)$ independent $GUE$ matrices of size $N$,
		\item $x = (x_1,\dots,x_d)$ a system of free semicircular variable,
		\item $Y^M = (Y_1^M,\dots,Y_p^M)$ deterministic matrices of size $M$ a fixed integer and their adjoints.
	\end{itemize}
	
	\noindent Then almost surely, for any polynomial $P\in \C\langle X_1,\dots,X_{d},Y_1,\dots,Y_p\rangle$, there exists a constant 
	$c(P)>0$ such that for any $c(P)>\alpha>0$,
	
	$$ \lim_{N\to \infty} N^{\alpha} \Big| \norm{P(X^N\otimes I_M, I_N\otimes Y^M)} - \norm{P(x\otimes I_M, 1\otimes Y^M)} \Big| =0 \.$$
	Moreover, if $\beta$ satisfies \eqref{taileq},  then almost surely for any $\alpha< (3+\beta)^{-1}$ and $\varepsilon < 1/4$, for $N$ large 
	enough,
	
	$$ -N^{-\alpha} \leq \norm{P(X^N\otimes I_M, I_N\otimes Y^M)} - \norm{P(x\otimes I_M, 1\otimes Y^M)} \leq N^{-\varepsilon} \. $$	
	
\end{theorem}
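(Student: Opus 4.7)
The two inequalities are direct consequences of Theorem \ref{concentration} via the Borel--Cantelli lemma; the proof reduces to matching the speed in $\delta$ of the tail bounds \eqref{upper}--\eqref{lower} with the two exponents $\varepsilon<1/4$ and $\alpha<(3+\beta)^{-1}$ in the theorem. The first assertion is then a simple packaging: taking $c(P):=\min\bigl(1/4,(3+\beta(P))^{-1}\bigr)>0$, the statement $N^{\alpha}|\norm{P(X^N\otimes I_M,I_N\otimes Y^M)}-\norm{P(x\otimes I_M,1\otimes Y^M)}|\to 0$ for $\alpha<c(P)$ is exactly the combination of the two-sided bound in the second assertion.

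\textbf{Upper bound.} Fix a polynomial $P$ and $\varepsilon<1/4$, and let $K,C$ be the constants supplied by Theorem \ref{concentration}. For $N$ large enough, $\delta_N:=N^{1/4-\varepsilon}-C>0$, and \eqref{upper} evaluated at $\delta_N$ gives
\[
\P\Bigl(\norm{P(X^N\otimes I_M,I_N\otimes Y^M)}-\norm{P(x\otimes I_M,1\otimes Y^M)}\ge N^{-\varepsilon}\Bigr)\ \le\ e^{-K(N^{1/4-\varepsilon}-C)^{2}\sqrt{N}}+d\,e^{-N}.
\]
Since $(N^{1/4-\varepsilon}-C)^{2}\sqrt{N}\sim N^{1-2\varepsilon}$ and $1-2\varepsilon>1/2$, the right-hand side is summable in $N$. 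Borel--Cantelli then yields that almost surely, for all $N$ large enough, $\norm{P(X^N\otimes I_M,I_N\otimes Y^M)}-\norm{P(x\otimes I_M,1\otimes Y^M)}\le N^{-\varepsilon}$.

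\textbf{Lower bound.} Analogously, fix $\alpha<(3+\beta)^{-1}$ and substitute $\delta_N:=N^{1/(3+\beta)-\alpha}-C$ into \eqref{lower}. The Gaussian-type exponent computes to
\[
\delta_N^{2}\,N^{(1+\beta)/(3+\beta)}\ \sim\ N^{2/(3+\beta)-2\alpha+(1+\beta)/(3+\beta)}\ =\ N^{1-2\alpha},
\]
and $\alpha<(3+\beta)^{-1}\le 1/3$ forces $1-2\alpha>1/3>0$. Summability of the resulting probability and Borel--Cantelli give $\norm{P(X^N\otimes I_M,I_N\otimes Y^M)}-\norm{P(x\otimes I_M,1\otimes Y^M)}\ge -N^{-\alpha}$ almost surely for $N$ large enough.

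\textbf{Conclusion and obstacle.} To upgrade these bounds to the uniform statement over $\varepsilon<1/4$ and $\alpha<(3+\beta)^{-1}$, I intersect the almost-sure events along countable sequences $\varepsilon_k\uparrow 1/4$ and $\alpha_k\uparrow(3+\beta(P))^{-1}$; countable intersection preserves full measure. Uniformity in $P$ is already built into the statement of Theorem \ref{concentration}, so no density/approximation argument in the coefficients of $P$ is needed at this stage. The only point requiring care is the exponent bookkeeping: Theorem \ref{concentration} has been calibrated precisely so that the scale $N^{-\varepsilon}$ with $\varepsilon<1/4$ (respectively $N^{-\alpha}$ with $\alpha<(3+\beta)^{-1}$) turns the Gaussian-type tail into a summable series. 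No additional ingredient is required, which is to say that the main difficulty of the result lives upstream, in Theorem \ref{concentration}, rather than in this corollary.
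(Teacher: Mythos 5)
Your proposal is correct and coincides with the paper's approach: the paper gives no proof of this theorem beyond the remark ``by applying Borel--Cantelli's Lemma, we immediately get,'' and your exponent bookkeeping (summability of $e^{-KN^{1-2\varepsilon}}$ for $\varepsilon<1/4$ and of $e^{-KN^{1-2\alpha}}$ for $\alpha<(3+\beta)^{-1}$) is exactly what makes that remark valid. One small imprecision: applying the two-sided bound with exponent $\alpha$ itself only gives $N^{\alpha}\left|\cdot\right|\le 1$, not $\to 0$; to obtain the limit you should, for $\alpha<c(P)$, invoke the second assertion with some $\alpha'\in(\alpha,c(P))$ in both slots, giving $N^{\alpha}\left|\cdot\right|\le N^{\alpha-\alpha'}\to 0$.
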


In order to conclude this Introduction, we would like to say that while it is not always easy to compute the constant $\beta$ in all generality, it 
is possible for some polynomials. In particular, if our polynomial is evaluated in a single GUE matrix, then the computation are heavily 
simplified by the fact that we know the distribution of a single semicircular variable, and we can always process them. Finally, the constant 
$(3+\beta)^{-1}$ is clearly a worst case scenario and can be easily improved if $\beta$ is explicit.

This paper is organised as follows. In Section \ref{definit}, we recall the definitions and properties of free probability, non-commutative 
calculus and Random Matrix Theory needed for this paper. Section \ref{mainsec} contains the proof of Theorem \ref{imp1}. And finally in 
Section \ref{proofcoro} we give the proof of the remaining Theorem and Corollaries.

\section{Framework and standard properties}
\label{definit}

\subsection{Usual definitions in free probability}
\label{deffree}

In order to be self-contained, we begin by reminding the following definitions from free probability.

\begin{defi}
	\label{freeprob}
	\begin{itemize}
		\item A $\mathcal{C}^*$-probability space $(\A,*,\tau,\norm{.}) $ is a unital $\mathcal{C}^*$-algebra $(\A,*,\norm{.})$ endowed with 
		a state $\tau$, i.e. a linear map $\tau : \A \to \C$ satisfying $\tau(1_{\A})=1$ and $\tau(a^*a)\geq 0$ for all $a\in \A$. In this paper 
		we always assume that $\tau$ is a trace, i.e. that it satisfies $\tau(ab) = \tau(ba) $ for any $a,b\in\A$. An element of $\A$ is called a 
		(non commutative) random variable. We will always work with faithful trace, namely, for $a\in\A$, $\tau(a^*a)=0$ if and only if 
		$a=0$. In this case the norm is determined by $\tau$ thanks to the formula:
		$$ \norm{a} = \lim_{k\to\infty} \big(\tau\big((a^*a)^{2k}\big)\big)^{1/2k}. $$
		
		\item Let $\A_1,\dots,\A_n$ be $*$-subalgebras of $\A$, having the same unit as $\A$. They are said to be free if for all $k$, for all 
		$a_i\in\A_{j_i}$ such that $j_1\neq j_2$, $j_2\neq j_3$, \dots , $j_{k-1}\neq j_k$:
		$$ \tau\Big( (a_1-\tau(a_1))(a_2-\tau(a_2))\dots (a_k-\tau(a_k)) \Big) = 0. $$
		Families of non-commutative random variables are said to be free if the $*$-subalgebras they generate are free.
		
		\item Let $ A= (a_1,\ldots ,a_k)$ be a $k$-tuple of random variables. The joint distribution of the family $A$ is the linear form 
		$\mu_A : P \mapsto \tau\big[ P(A, A^*) \big]$ on the set of polynomials in $2k$ non commutative indeterminates. 
		By convergence in distribution, for a sequence of families of variables $(A_N)_{N\geq 1} = (a_{1}^{N},\ldots ,a_{k}^{N})_{N\geq 1}$ 
		in $\mathcal C^*$-algebras $\big( \mathcal A_N, ^*, \tau_N, \norm{.} \big)$,
		we mean the pointwise convergence of
		the map 
		$$ \mu_{A_N} : P \mapsto \tau_N \big[ P(A_N, A_N^*) \big],$$
		and by strong convergence in distribution, we mean convergence in distribution, and pointwise convergence
		of the map
		$$P \mapsto \big\| P(A_N, A_N^*) \big\|.$$
		
		\item A family of non commutative random variables $ x=(x_1,\dots ,x_p)$ is called 
		a free semicircular system when the non commutative random variables are free, 
		selfadjoint ($x_i=x_i^*$, $i=1 \dots p$), and for all $k$ in $\N$ and $i=1 , \dots , p$, one has
		\begin{equation*}
		\tau( x_i^k) =  \int t^k d\sigma(t),
		\end{equation*}
		with $d\sigma(t) = \frac 1 {2\pi} \sqrt{4-t^2} \ \mathbf 1_{|t|\leq2} \ dt$ the semicircle distribution.
		
	\end{itemize}
	
\end{defi}

The strong convergence of   non-commutative random variables is actually equivalent to the convergence of the spectrum of their 
polynomials for the Hausdorff distance. More precisely we have the following proposition whose proof can be found in \cite[Proposition 2.1]
{collins_male} :

\begin{prop}
	\label{hausdorff}
	Let $\mathbf x_N = (x_1^N,\dots, x_p^N)$ and $\mathbf x = (x_1,\dots, x_p)$ be  $p$-tuples of variables in $\mathcal C^*$-probability 
	spaces, $(\mathcal A_N, .^*, \tau_N, \| \cdot \|)$ and $(\mathcal A, .^*, \tau, \| \cdot \|)$, with faithful states. Then, the following assertions 
	are equivalent.
	
	\begin{itemize}
		\item $\mathbf x_N$ converges strongly in distribution to $\mathbf x$.
		\item For any self-adjoint variable $h_N=P(\mathbf x_N)$, where $P$ is a fixed polynomial, $\mu_{h_N}$ converges in weak-$*$ 
		topology to $\mu_h$ where $h=P( \mathbf x)$. Weak-$*$ topology means relatively to continuous functions on $\mathbb C$. 
		Moreover, the spectrum of $h_N$ converges in Hausdorff distance to the spectrum of $h$, that is, for any $\varepsilon >0$, there 
		exists $N_0$ such that for any $N\geq N_0$, 
		\begin{equation}
			\sigma(h_N) \ \subset \ \sigma(h) \ + (-\varepsilon,\varepsilon).
		\end{equation}
	\end{itemize}
	
	\noindent In particular, the strong convergence in distribution of a single self-adjoint variable is equivalent to  its convergence in 
	distribution together with the Hausdorff convergence of its spectrum.
	
\end{prop}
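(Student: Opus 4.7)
The plan is to prove both implications separately. For the forward direction, assume $\mathbf{x}_N$ converges strongly in distribution to $\mathbf{x}$, and fix a self-adjoint polynomial $P$, writing $h_N = P(\mathbf{x}_N)$ and $h = P(\mathbf{x})$. Weak-$\ast$ convergence of $\mu_{h_N}$ to $\mu_h$ will follow from the method of moments: convergence in distribution gives $\int x^k\, d\mu_{h_N} = \tau_N(h_N^k) \to \tau(h^k) = \int x^k\, d\mu_h$ for every $k$, while norm convergence bounds $\sup_N \|h_N\| < \infty$, so the spectral measures are uniformly compactly supported and hence tight, and the limit is determined by its moments.

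For the Hausdorff convergence of $\sigma(h_N)$ to $\sigma(h)$, I would treat the two inclusions separately. To show $\sigma(h_N) \subset \sigma(h) + (-\varepsilon,\varepsilon)$ eventually, I argue by contradiction: if along a subsequence there exists $\lambda_N \in \sigma(h_N)$ with $d(\lambda_N, \sigma(h)) \geq \varepsilon$, I extract a further subsequence with $\lambda_N \to \lambda_\infty$, possible since all spectra lie in a common bounded interval. By Weierstrass approximation on that interval, I produce a polynomial $p$ with $|p| \leq \varepsilon/4$ on $\sigma(h)$ and $p(\lambda_\infty) \geq 1$; then $\|p(h)\| \leq \varepsilon/4$ while $\|p(h_N)\| \geq |p(\lambda_N)| \to 1$, contradicting norm convergence of $p(h_N)$ to $p(h)$. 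For the reverse inclusion $\sigma(h) \subset \sigma(h_N) + (-\varepsilon,\varepsilon)$, I first invoke faithfulness of the trace to conclude $\mathrm{supp}(\mu_h) = \sigma(h)$: any point of $\sigma(h)$ outside $\mathrm{supp}(\mu_h)$ would admit a continuous bump $f$ with $f(h) \neq 0$ by the continuous functional calculus but $\tau(f(h)^\ast f(h)) = 0$. Then every neighborhood $U$ of $\lambda \in \sigma(h)$ satisfies $\mu_h(U) > 0$, so weak convergence forces $\mu_{h_N}(U) > 0$ for large $N$, placing an element of $\sigma(h_N)$ in $U$.

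For the converse direction, assume weak-$\ast$ convergence and Hausdorff convergence of spectra for every self-adjoint polynomial. Convergence in distribution reduces to the self-adjoint case by expressing any polynomial as a complex combination of self-adjoint ones, after which $\tau_N(R^k) = \int x^k\, d\mu_{R_N} \to \int x^k\, d\mu_R$, because Hausdorff convergence supplies a common compact support on which $x^k$ is bounded continuous. Norm convergence is immediate for self-adjoint $P$ since $\|P(\mathbf{x}_N)\|$ equals the spectral radius, which by Hausdorff convergence tends to $\|P(\mathbf{x})\|$; the general case follows from the $C^\ast$-identity $\|Q\|^2 = \|Q^\ast Q\|$ applied to the self-adjoint variable $Q^\ast Q$.

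The main obstacle is the construction of the separating polynomial $p$ in the first Hausdorff inclusion: it must be small on $\sigma(h)$ yet large at the outlier, and must serve as a valid test whose $C^\ast$-norm is controlled simultaneously for all $h_N$ and for $h$. The uniform norm bound from strong convergence places all relevant spectra inside a common compact interval, and Weierstrass approximation on that interval delivers $p$ with no further difficulty, making this obstacle structural rather than technical.
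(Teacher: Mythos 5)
Your proof is correct and follows the standard argument; the paper itself does not prove this proposition but simply cites \cite{collins_male} (Proposition 2.1), whose proof proceeds along essentially the same lines (moments for weak-$*$ convergence, norm convergence plus Weierstrass for the forward spectral inclusion, faithfulness plus Portmanteau for the reverse one, and the $C^*$-identity to handle non-self-adjoint polynomials). One routine point to tighten: in the reverse inclusion $\sigma(h)\subset\sigma(h_N)+(-\varepsilon,\varepsilon)$, your argument produces, for each fixed $\lambda\in\sigma(h)$, an $N_0(\lambda)$, and you must cover the compact set $\sigma(h)$ by finitely many balls of radius $\varepsilon/2$ to obtain a uniform $N_0$; also, the statement of the second bullet in the proposition records only one inclusion, and you are right that the other one (which gives $\liminf_N\|h_N\|\ge\|h\|$ in the converse direction) is recovered from weak-$*$ convergence together with faithfulness of $\tau$, by the very argument you already give in the forward direction.
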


It is important to note that thanks to Theorem 7.9 from \cite{nica_speicher_2006}, that we recall below, one can consider free version of any 
random variable.

\begin{theorem}
	\label{freesum}
	
	Let $(\A_i,\phi_i)_{i\in I}$ be a family of $\mathcal{C}^*$-probability spaces such that the functionals $\phi_i : \A_i\to\C$, $i\in I$, are 
	faithful traces. Then there exist a $\mathcal{C}^*$-probability space $(\A,\phi)$ with $\phi$ a faithful trace, and a family of norm-
	preserving unital $*$-homomorphism $W_i: \A_i\to\A$, $i\in I$, such that:
	
	\begin{itemize}
		\item $\phi \circ W_i = \phi_i$, $\forall i \in I$.
		\item The unital $\mathcal{C}^*$-subalgebras form a free family in $(\A,\phi)$.
	\end{itemize}
\end{theorem}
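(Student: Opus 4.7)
The plan is to carry out the free product construction of $C^*$-probability spaces, which is the standard route. First I would pass through the GNS construction: for each $i \in I$, represent $(\A_i, \phi_i)$ on a Hilbert space $H_i$ with a distinguished cyclic unit vector $\xi_i$ so that $\phi_i(a) = \langle \pi_i(a)\xi_i, \xi_i\rangle_{H_i}$ for $a \in \A_i$. Since $\phi_i$ is a faithful trace, $\pi_i$ is faithful (hence isometric) and $\xi_i$ is separating for $\pi_i(\A_i)$.

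Next I would build the free product of pointed Hilbert spaces $(H,\xi) = *_{i \in I}(H_i,\xi_i)$, defined as
\begin{equation*}
H = \C\xi \;\oplus\; \bigoplus_{n\geq 1}\ \bigoplus_{i_1\neq i_2\neq\cdots\neq i_n}\, H_{i_1}^\circ \otimes \cdots \otimes H_{i_n}^\circ,
\end{equation*}
where $H_i^\circ := H_i \ominus \C\xi_i$. For each $i$, one defines the free product representation $\lambda_i : \A_i \to B(H)$ by using the standard unitary identification $H \cong H_i \otimes H(i)$, where $H(i)$ is the free product of the $(H_j,\xi_j)$ for $j\neq i$ together with a copy of $\C\xi$, and letting $\lambda_i(a)$ act as $\pi_i(a) \otimes \mathrm{id}$. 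Then I would take $\A$ to be the $C^*$-algebra generated by $\bigcup_{i\in I} \lambda_i(\A_i)$ inside $B(H)$, set $\phi(a) = \langle a\xi, \xi\rangle$, and define $W_i = \lambda_i$.

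With these definitions the verifications split into four points. First, $\phi\circ W_i = \phi_i$ follows immediately from the identity $\lambda_i(a)\xi = \pi_i(a)\xi_i$ in the identification above. Second, $W_i$ is norm-preserving: it is a $*$-homomorphism by construction, and its injectivity comes from the fact that $\lambda_i(a)\xi = 0 \Leftrightarrow \pi_i(a)\xi_i = 0 \Leftrightarrow a=0$ since $\xi_i$ is separating. Third, freeness of the family $\bigl(\lambda_i(\A_i)\bigr)_{i\in I}$ is a direct computation: an alternating product of centered elements $\lambda_{i_1}(a_1)\cdots\lambda_{i_k}(a_k)$ with $i_1\neq\cdots\neq i_k$ sends $\xi$ into the component $H_{i_1}^\circ \otimes \cdots \otimes H_{i_k}^\circ$, which is orthogonal to $\C\xi$, so its inner product with $\xi$ vanishes.

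The hard part, as usual with this construction, is the fourth point: checking that $\phi$ is a faithful trace on $\A$. Traciality is verified on the dense $*$-subalgebra of alternating words by an inductive computation (the Voiculescu moment-cumulant type argument), reducing a trace of an alternating product to shorter ones via the freeness relations and the fact that each $\phi_i$ is tracial. Faithfulness requires more care: one shows that $\xi$ is separating for $\A$, using that each $\xi_i$ is separating for $\pi_i(\A_i)$ and that the free product of separating vectors is separating. Equivalently, one can build a trace-preserving conditional expectation onto each $\lambda_i(\A_i)$ and iterate, or simply appeal to Voiculescu's original construction. This faithfulness step is the only delicate point; everything else in the statement is essentially formal once the free product Hilbert space is in place.
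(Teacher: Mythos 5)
The paper does not prove this statement: it is imported verbatim as Theorem~7.9 of Nica and Speicher \cite{nica_speicher_2006}, which the authors invoke so that they can form the free sum $(\A_N,\tau_N)$ of $\M_N(\C)$ with a free semicircular family. Your sketch reconstructs the standard reduced free product argument that underlies that reference, so in spirit it coincides with the cited proof.

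One step in your outline deserves tightening. The phrase ``the free product of separating vectors is separating'' is not a ready-made lemma you can invoke; it is precisely the content to be established, and in general it is a nontrivial theorem (for non-tracial faithful states faithfulness of the reduced free product state is a result of Dykema, proved by quite different means, not an abstract transfer of the separating property). In the tracial setting the correct mechanism is: first verify, as you indicate, that $\phi$ is a trace on the dense $*$-algebra of alternating reduced words; then observe that the vector state $\omega_\xi$ is normal on $M := \A''$ and tracial there by weak density; finally use the elementary fact that if $\xi$ is cyclic for $M$ and $\omega_\xi$ is tracial on $M$, then $\xi$ is automatically separating (one checks $a\xi=0\Rightarrow a^*\xi=0$ via $\|a^*\xi\|^2=\omega_\xi(aa^*)=\omega_\xi(a^*a)=\|a\xi\|^2$, and then $\|ab\xi\|^2=\omega_\xi(b^*a^*ab)=\omega_\xi(abb^*a^*)=\|b^*a^*\xi\|^2=0$ for all $b$). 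Faithfulness of $\phi$ on $\A\subset M$ then follows by restriction. As written, your faithfulness step conflates the conclusion with the mechanism and does not make clear that traciality, not separation of the $\xi_i$ alone, is what drives it. The remaining items --- GNS, the pointed free product Hilbert space, the definition of $\lambda_i$ via $H\cong H_i\otimes H(i)$, the verifications of $\phi\circ W_i=\phi_i$, of isometry of $W_i$, and of freeness by the orthogonality computation --- are correct and are exactly the standard argument.
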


Let us now fix a few notations concerning the spaces and traces that we use in this paper.

\begin{defi}
\begin{itemize}
	\item $(\A_N,\tau_N)$ is the free sum of $\M_N(\C)$ with a system of $d$ free semicircular variable, this is the $\mathcal{C}^*$-
	probability space built in Theorem \ref{freesum}. Note that when restricted to $\M_N(\C)$, $\tau_N$ is just the regular renormalized trace 
	on matrices. The restriction of $\tau_{N}$ to the $\mathcal{C}^*$-algebra generated by the free semicircular system $x$ is denoted as 
	$\tau$.
	\item $\tr_N$ is the non-renormalized trace on $\M_N(\C)$.
	\item $\M_N(\C)_{sa}$ is the set of self adjoint matrix of $\M_N(\C)$.
	\item We regularly identify $\M_N(\C)\otimes \M_k(\C)$ with $\M_{kN}(\C)$ through the isomorphism 
	$E_{i,j}\otimes E_{r,s} \mapsto E_{i+rN,j+sN} $, similarly we identify $\tr_N\otimes\tr_k$ with $\tr_{kN}$.
	\item If $A^N=(A_1^N,\dots,A_d^N)$ and $B^M=(B_1^M,\dots,B_d^M)$ are two vectors of random matrices, then we denote 
	$A^N\otimes B^M=(A_1^N\otimes B^M_1,\dots,A_d^N\otimes B^M_d)$. We typically use the notation $X^N\otimes I_M$ for the vector 
	$(X^N_1\otimes I_M,\dots,X^N_1\otimes I_M)$.
\end{itemize}
\end{defi}

\subsection{Non-commutative polynomials and derivatives}
\label{poly}

We set $\A_{d,q}=\C\langle X_1,\dots,X_d,Y_1,\dots,Y_q,Y_1^*,\dots,Y_q^*\rangle$ the set of non-commutative polynomial in $d+2q$ 
indeterminates. We endow this vector space with the norm

\begin{equation}
	\label{normA}
	\norm{P}_A = \sum_{M \text{monomial}} |c_M(P)| A^{\deg M} \,
\end{equation}

\noindent where $c_M(P)$ is the coefficient of $P$ for the monomial $M$ and $\deg M$ the total degree of $M$ (that is the sum of its degree 
in each letter $X_1,\dots,X_d,Y_1,\dots,Y_q,Y_1^*,\dots,Y_q^*$). Let us define several maps which we use frequently in the sequel
First, for $A,B,C\in \A_{d,q}$, let

$$ A\otimes B \# C = ACB \,$$

$$ A\otimes B \tilde{\#} C = BCA \,$$

$$ m(A\otimes B) = BA \.$$

\begin{defi}
	\label{application}
	
	If $1\leq i\leq d$, one defines the non-commutative derivative $\partial_i: \A_{d,q} \longrightarrow \A_{d,q} \otimes \A_{d,q}$  by its value 
	on a monomial  $M\in \A_{d,q}$  given by
	$$ \partial_i M = \sum_{M=AX_iB} A\otimes B \,$$
	and then extend it by linearity to all polynomials. Similarly one defines the cyclic derivative  $D_i: \A_{d,q} \longrightarrow \A_{d,q}$ for $P\in \A_{d,q}$ by
	
	$$ D_i P = m\circ \partial_i P \ . $$

\end{defi}

The map $\partial_i$ is called the non-commutative derivative. It is related to Schwinger-Dyson equation on semicircular variable thanks to the following property \ref{SDE}. One can find a proof of the first part in \cite{alice}, Lemma 5.4.7. As for the second part it is a direct consequence of the 
first one which can easily be verified by taking $P$ monomial and then concluding by linearity.

\begin{prop}
\label{SDE}
	Let $ x=(x_1,\dots ,x_p)$ be a free semicircular system, $y = (y_1,\dots,y_q)$ be non-commutative random variable free from $x$, if the 
	family $(x,y)$ belongs to the $\mathcal{C}^*$-probability space $(\A,*,\tau,\norm{.}) $, then for any $P\in \A_{d,q}$,
	$$ \tau(P(x,y,y^*)\ x_i) = \tau\otimes\tau(\partial_i P(x,y,y^*))\ .$$
	
	\noindent Moreover, one can deduce that if $Z^{NM}$ are matrices in $\M_N(\C)\otimes \M_M(\C)$ that we view as a subspace of 
	$\A_N\otimes \M_M(\C)$, then for any $P\in \A_{d,q}$,
	$$ \tau_N\otimes\tau_M\Big( P(x\otimes I_M,Z^{NM},{Z^{NM}}^*)\ x_i\otimes I_M \Big) = \tau_M\Big( (\tau_N\otimes I_M) \bigotimes (\tau_N\otimes I_M) \big(\partial_i P(x\otimes I_M,Z^{NM},{Z^{NM}}^*)\big)\Big) \ . $$
	
\end{prop}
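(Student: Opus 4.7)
The plan is to reduce both identities to the case where $P$ is a monomial via linearity, and then to invoke the moment-cumulant formula together with the structure of the free cumulants of a semicircular system free from $y$.

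For the first identity, write $P = M = Z_{j_1} \cdots Z_{j_n}$ with each $Z_{j_k}$ among the generators $x_1, \dots, x_p, y_1, \dots, y_q, y_1^*, \dots, y_q^*$. By the definition of $\partial_i$,
\[ \partial_i M = \sum_{k \; : \; Z_{j_k} = x_i} Z_{j_1} \cdots Z_{j_{k-1}} \otimes Z_{j_{k+1}} \cdots Z_{j_n}, \]
so the right-hand side of Proposition \ref{SDE} equals $\sum_{k \; : \; Z_{j_k} = x_i} \tau(Z_{j_1} \cdots Z_{j_{k-1}}) \; \tau(Z_{j_{k+1}} \cdots Z_{j_n})$. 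I would expand the left-hand side $\tau(Z_{j_1} \cdots Z_{j_n} x_i)$ via the moment-cumulant formula $\sum_{\pi \in NC(n+1)} \prod_{V \in \pi} \kappa_V$, and invoke two facts: mixed free cumulants between $x$'s and $y$'s vanish (freeness of $x$ from $y$), and the only non-vanishing free cumulants among the $x_j$'s are $\kappa_2(x_j, x_k) = \delta_{jk}$ (semicircularity plus mutual freeness). Thus the block of $\pi$ through position $n+1$ must be a pair $\{k, n+1\}$ with $Z_{j_k} = x_i$, and non-crossing forces the remaining blocks to partition $\{1, \dots, k-1\}$ and $\{k+1, \dots, n\}$ separately. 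Re-summing the moment-cumulant formula inside each interval yields $\tau(Z_{j_1} \cdots Z_{j_{k-1}}) \; \tau(Z_{j_{k+1}} \cdots Z_{j_n})$, and summing over $k$ matches the right-hand side.

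For the matrix-amplified version, linearity again reduces the problem to a monomial $M$. The key observation is that $x_i \otimes I_M$ is a semicircular element in $(\A_N \otimes \M_M(\C), \tau_N \otimes \tau_M)$ with the same distribution as $x_i$, since $(\tau_N \otimes \tau_M)((x_i \otimes I_M)^k) = \tau_N(x_i^k)$. After substitution, each segment $Z_{j_1} \cdots Z_{j_{k-1}}$ becomes an element of $\A_N \otimes \M_M(\C)$. The scalar Schwinger-Dyson identity, applied in the $\A_N$-direction, pairs the trailing $x_i \otimes I_M$ with each internal occurrence of $x_i \otimes I_M$; the intervening factors in $\M_M(\C)$ behave as passive coefficients since $x_i \otimes I_M$ commutes with $I_N \otimes \M_M(\C)$. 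Applying $(\tau_N \otimes I_M)$ to each tensor side of $\partial_i M(x \otimes I_M, Z^{NM}, {Z^{NM}}^*)$, multiplying the two $\M_M(\C)$-valued outputs in the order dictated by $m$, and evaluating with $\tau_M$ recovers exactly $\tau_N \otimes \tau_M(M(x \otimes I_M, Z^{NM}, {Z^{NM}}^*) \cdot (x_i \otimes I_M))$.

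The main obstacle is the bookkeeping for the matrix-amplified identity: one must verify that the product $m$ implicit in the definition of $\partial_i$ corresponds, after taking the two partial traces, to matrix multiplication of the $\M_M(\C)$-valued outputs in the correct (reversed) order, consistent with the tracial property of $\tau_M$. Once this is checked on a general monomial, linearity concludes.
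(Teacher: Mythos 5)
Your proposal is correct, and it matches the route the paper takes (the paper itself simply cites \cite{alice}, Lemma 5.4.7 for the first identity, and observes that the second follows by taking $P$ monomial and using linearity — exactly your plan). Your moment–cumulant argument for the scalar identity is the standard one: the block of the non-crossing partition containing the final $x_i$ must be a pair with an earlier $x_i$ because semicircularity kills all other cumulants involving $x_i$ and freeness kills mixed cumulants, and non-crossingness then factorizes the remaining moments into the two intervals. For the amplified version, your bookkeeping reduces to the observation that $x_i\otimes I_M$ has $I_M$ in its $\M_M(\C)$-leg, so the $\M_M(\C)$-components of the monomial concatenate unchanged across the deleted $x_i$; this yields $(\tau_N\otimes I_M)(\mathcal A)\cdot(\tau_N\otimes I_M)(\mathcal B)$ in the same left-to-right order as $\partial_i$ produces, with no reversal. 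Your closing worry about the map $m$ is therefore a red herring: $m$ enters the definition of the cyclic derivative $D_i$, not $\partial_i$, and the proposition's operator $(\tau_N\otimes I_M)\bigotimes(\tau_N\otimes I_M)$ multiplies in the natural order, so no order-reversal needs to be reconciled.
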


We define an involution $*$ on $\A_{d,q}$ such that

$$(X_i)^* = X_i\, \quad (Y_i)^* = Y_i^*\,\quad (Y_i^*)^* = Y_i$$

\noindent and then we extend it to $\A_{d,q}$ by the formula $(\alpha P Q)^* = \overline{\alpha} Q^* P^*$. $P\in \A_{d,q}$ is said to be self-
adjoint if $P^* = P$. Self-adjoint polynomials have the property that if $x_1,\dots,x_d,z_1,\dots,z_q$ are elements of a $\mathcal{C}^*$-
algebra such as $x_1,\dots,x_d$ are self-adjoint, then so is $P(x_1,\dots,x_d,z_1,\dots,z_q,z_1^*,\dots,z_q^*)$. Now that we have defined the 
notion of self-adjoint polynomial we give a property which justifies computations that we will do later on:

\begin{prop}
	Let the following objects be given,
	\begin{itemize}
		\item $x=(x_1,\dots ,x_p)$ a free semicircular system ,
		\item $X^N = (X_1^N,\dots,X_d^N)$ self-ajoint matrices of size $N$,
		\item $X_t^N = e^{-t/2} X^N + (1-e^{-t})^{1/2} x$ elements of $\A_N$,
		\item $Z^{NM}$ matrices in $\M_N(\C)\otimes \M_M(\C)$,
		\item $f\in \mathcal{C}^0(\R)$,
		\item $P$ a self-adjoint polynomial.
	\end{itemize}
	Then the following  map is measurable:
	$$ (X^N,Z^{NM}) \mapsto \tau_N\otimes\tau_M\left(f\left(P(X^N_t\otimes I_M,Z^{NM},{Z^{NM}}^*)\right)\right) .$$
\end{prop}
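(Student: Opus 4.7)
The plan is to upgrade the claim from measurability to continuity in the Euclidean topology on matrix entries, which is strictly stronger than Borel measurability. The argument factors into three norm-continuous building blocks, each essentially standard once the $C^*$-algebraic framework is fixed.

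First, because $x$ is fixed once and for all, the assignment $X^N \mapsto X^N_t \otimes I_M$ is affine from $(\M_N(\C)_{sa})^d$ into $(\A_N \otimes \M_M(\C))^d$ equipped with the operator norm, hence continuous. Coupled with the continuity of $Z^{NM} \mapsto {Z^{NM}}^*$ and the joint norm-continuity of addition and multiplication in the $C^*$-algebra $\A_N \otimes \M_M(\C)$, every polynomial expression is norm-continuous in its arguments, so
\[ (X^N, Z^{NM}) \longmapsto P\bigl(X^N_t \otimes I_M,\, Z^{NM},\, {Z^{NM}}^*\bigr) \]
is continuous into the norm topology on $\A_N \otimes \M_M(\C)$. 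Since $P$ is self-adjoint and each $X^N_t \otimes I_M$ is self-adjoint as a real linear combination of the self-adjoint elements $X^N \otimes I_M$ and $x \otimes I_M$, the image of this map lies in the self-adjoint part of $\A_N \otimes \M_M(\C)$.

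Second, I would invoke the continuous functional calculus: for any $f \in \CC^0(\R)$, the map $B \mapsto f(B)$ on self-adjoint elements of a unital $C^*$-algebra is continuous in the norm topology. Third, $\tau_N \otimes \tau_M$ is a positive linear functional of norm $1$, in particular norm-continuous. Composing these three norm-continuous maps yields continuity, and hence Borel measurability, of the function appearing in the statement.

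The only step deserving genuine care is the continuous functional calculus step, for which the subtle point is to control the spectra of the self-adjoint elements uniformly on neighborhoods of a given $B$. This is handled by the classical fact that $B_n \to B$ in norm implies $\sigma(B_n) \subset \sigma(B) + (-\varepsilon,\varepsilon)$ for $n$ large, so one may approximate $f$ uniformly by polynomials on a single fixed compact interval via Stone--Weierstrass and then invoke the obvious norm-continuity of polynomial functional calculus. This is the main, but routine, technical point in the proof.
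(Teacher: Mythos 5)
Your proof is correct, and it delivers a strictly stronger conclusion (norm-continuity rather than mere measurability) than the paper asks for. The paper takes a more economical route: it picks polynomials $f_k$ with $\sup_{[-k,k]}|f - f_k| < 1/k$, observes that each map $(X^N, Z^{NM}) \mapsto \tau_N\otimes\tau_M\bigl(f_k(P(\cdot))\bigr)$ is a polynomial in the matrix entries and hence measurable, and then concludes by pointwise convergence, since for any fixed argument the relevant spectrum sits in a bounded set on which the $f_k$ converge uniformly to $f$. The paper even remarks that one could show continuity but that it is unnecessary for the intended application — which is precisely what you did. Both arguments hinge on the same mechanism (uniform polynomial approximation of $f$ on a compact spectral window plus norm control of polynomial evaluation), but they package it differently: the paper uses it once, globally, to produce a pointwise-converging sequence of measurable functions, whereas you fold it into the continuous-functional-calculus step and then compose three norm-continuous maps. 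Your version is more conceptual and yields continuity for free; the paper's is shorter and stays at the measurability level deliberately, since all it needs is that the random variable $\tau_N\otimes\tau_M\bigl(f(P(X^N_t\otimes I_M, Z^{NM}, {Z^{NM}}^*))\bigr)$ be well-defined.
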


\begin{proof}
	Let $(f_k)_k$ be a sequence of polynomial such that $ sup_{[-k,k]} |f-f_k| < 1/k$, then the map 
	$$ (X^N,Z^{NM}) \mapsto \tau_N\otimes\tau_M\left(f_k\left(P(X^N_t\otimes I_M,Z^{NM},{Z^{NM}}^*)\right)\right) $$
	is measurable since it is a polynomial in the entries of the matrices. Besides it converges pointwise towards
	$$ (X^N,Z^{NM}) \mapsto \tau_N\otimes\tau_M\left(f\left(P(X^N_t\otimes I_M,Z^{NM},{Z^{NM}}^*)\right)\right). $$
	Hence this map is measurable.
\end{proof}

Actually we could easily prove that this map is continuous, however we do not need it. The only reason we need this property is to justify that 
if $X^N$ is a vector of $d$ independent GUE matrices, then the random variable 
$\tau_N\otimes\tau_M\left(f\left(P(X^N_t\otimes I_M,Z^{NM},{Z^{NM}}^*)\right)\right)$ is well-defined.

\subsection{GUE random matrices}

We conclude this section by reminding the definition of Gaussian random matrices and stating a few useful properties about them. 

\begin{defi}
	\label{GUEdef}
	A GUE random matrix $X^N$ of size $N$ is a self adjoint matrix whose coefficients are random variables with the following laws:
	\begin{itemize}
		\item For $1\leq i\leq N$, the random variables $\sqrt{N} X^N_{i,i}$ are independent centered Gaussian random variables of 
		variance $1$.
		\item For $1\leq i<j\leq N$, the random variables $\sqrt{2N}\ \Re{X^N_{i,j}}$ and $\sqrt{2N}\ \Im{X^N_{i,j}}$ are independent 
		centered Gaussian random variables of variance $1$, independent of $\left(X^N_{i,i}\right)_i$.
	\end{itemize}
\end{defi}

We now present two of the most useful tools when it comes to computation with Gaussian variable, the Poincar\'e inequality and Gaussian 
integration by part. Firstly, the Poincar\'e inequality:

\begin{prop}
	\label{Poinc}
	Let $(x_1,\dots,x_n)$ be i.i.d. centered Gaussian random variable with variance $1$, let $f:\R^n\to \R$ be $\mathcal{C}^1$, then
	
	$$ \var\big(f(x_1,\dots,x_n)\big) \leq \E\big[ \norm{\nabla f (x_1,\dots,x_n)}_2^2 \big] \.$$
	
\end{prop}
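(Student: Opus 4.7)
The plan is to prove this via the Ornstein--Uhlenbeck semigroup, a choice especially natural here since an analogous interpolation underlies the proof of Theorem \ref{imp1}. Write $X = (x_1,\dots,x_n)$ and let $Y=(Y_1,\dots,Y_n)$ be an independent copy of $X$. For $t\geq 0$, define the Mehler semigroup acting on measurable functions $f:\R^n \to \R$ by
$$ P_t f(z) = \E\left[ f\left( e^{-t} z + \sqrt{1-e^{-2t}}\, Y \right) \right] \. $$
Then $P_0 f = f$, $P_t f(z) \to \E[f(X)]$ as $t\to\infty$, and the standard Gaussian law is its invariant measure. Its generator $L = \Delta - z\cdot\nabla$ satisfies $\partial_t P_t f = L P_t f$ and the Dirichlet identity $\E[(Lg)(X)\,h(X)] = -\E[\nabla g(X)\cdot \nabla h(X)]$ (Gaussian integration by parts). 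The key property, obtained by differentiating under the expectation, is the commutation relation
$$ \nabla P_t f = e^{-t}\, P_t(\nabla f) \. $$

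Next, setting $g = f - \E[f(X)]$ and $\phi(t) = \E[g(X)\,(P_t g)(X)]$, one has $\phi(0)=\var(f(X))$ and $\phi(\infty)=0$. Writing $\var(f(X)) = -\int_0^\infty \phi'(t)\,dt$, invoking $\partial_t P_t = LP_t$, then the Dirichlet identity, and finally the commutation relation, I obtain
$$ \var(f(X)) = \int_0^\infty e^{-t}\, \E\left[ \nabla f(X) \cdot P_t(\nabla f)(X) \right] dt \. $$
By self-adjointness of $P_t$ on $L^2$ of the Gaussian measure, the integrand equals $\E\bigl[|P_{t/2}(\nabla f)(X)|^2\bigr]$, and Jensen's inequality applied inside $P_{t/2}$ upper bounds this by $\E[P_{t/2}(|\nabla f|^2)(X)] = \E[|\nabla f(X)|^2]$. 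Since $\int_0^\infty e^{-t}\,dt = 1$, the Poincar\'e inequality follows.

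There is no serious obstacle in this classical argument; the only technicality is justifying the interchanges of expectation, derivative and limit as $t\to\infty$. This is routine: one first proves the inequality for smooth bounded $f$ with bounded derivatives (where every quantity is manifestly finite and all exchanges are legal), then approximates a general $\mathcal{C}^1$ function by convolution with a mollifier, which preserves $\mathcal{C}^1$ and produces $\nabla f$ in the limit; if $\E[|\nabla f(X)|^2] = \infty$ the bound is trivial.
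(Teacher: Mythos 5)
The paper does not prove Proposition~\ref{Poinc}; it is stated as a standard tool and the reader is referred to Anderson--Guionnet--Zeitouni (\cite{alice}, Definition~4.4.2). So there is no proof in the paper to compare against, and your proposal should be judged on its own.

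Your semigroup proof is correct. The four ingredients — invariance of the standard Gaussian under the Mehler semigroup $P_t$, the generator identity $\partial_t P_t = L P_t$ with $L=\Delta - z\cdot\nabla$, the Gaussian integration-by-parts Dirichlet identity $\E[(Lg)h]=-\E[\nabla g\cdot\nabla h]$, and the commutation $\nabla P_t = e^{-t}P_t\nabla$ — combine exactly as you describe to give $\var(f(X))=\int_0^\infty e^{-t}\,\E\bigl[\nabla f(X)\cdot P_t(\nabla f)(X)\bigr]\,dt$, and then $P_t=P_{t/2}P_{t/2}$ together with self-adjointness and Jensen closes the argument. (An equivalent finish is Cauchy--Schwarz plus $L^2$-contractivity of $P_t$, avoiding the $t/2$ splitting.) The closing approximation remarks are slightly terse — to reduce to bounded $f$ with bounded gradient you should compose with a bounded smooth cutoff $\psi_R$ with $|\psi_R'|\le 1$ (which only decreases $|\nabla f|$) and then mollify, using monotone/dominated convergence — but this is genuinely routine for the Gaussian Poincar\'e inequality and your statement that it is so is acceptable. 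A nice touch is your observation that the Mehler interpolation $e^{-t}z+\sqrt{1-e^{-2t}}Y$ is the scalar analogue of the free Ornstein--Uhlenbeck interpolation $X^{t,N}=e^{-t/2}X^N+(1-e^{-t})^{1/2}x$ driving the proof of Theorem~\ref{imp1}, so the proof fits the spirit of the paper even though the authors themselves chose to cite the result.
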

For more details about the Poincar\'e inequality, we refer to Definition 4.4.2 in \cite{alice}. As for Gaussian integration by part, it comes 
from the following formula, if $Z$ is a centered Gaussian variable with variance $1$ and $f$ a $\mathcal{C}^1$ function, then

\begin{equation}
\label{IPPG}
	\E[Z f(Z)] = \E[\partial_Z f(Z)] \ .
\end{equation}

\noindent A direct consequence of this, is that if $x$ and $y$ are centered Gaussian variable with variance $1$, and 
$Z = \frac{x+\i y}{\sqrt{2}}$, then

\begin{equation}
\label{IPPG2}
	\E[Z f(x,y)] = \E[\partial_Z f(x,y)]\quad \text{ and  }\quad \E[\overline{Z} f(x,y)] = \E[\partial_{\overline{Z}} f(x,y)] \ ,
\end{equation}

\noindent where $\partial_Z = \frac{1}{2} (\partial_x + \i \partial_y)$ and $\partial_{\overline{Z}} = \frac{1}{2} (\partial_x - \i \partial_y)$. When 
working with GUE matrices, an important consequence of this are the so-called Schwinger-Dyson equation, which we summarize 
in the following 
proposition. For more information about these equations and their applications, we refer to \cite{alice}, Lemma 5.4.7.

\begin{prop}
\label{SD}
	Let $X^N$ be GUE matrices of size $N$, $Q\in \A_{d,q}$, then for any $i$,
	
	$$ \E\left[ \frac{1}{N}\tr_N(X^N_i\ Q(X^N)) \right] = \E\left[ \left(\frac{1}{N}\tr_N\right)^{\otimes 2} (\partial_i Q(X^N)) \right] . $$
	
\end{prop}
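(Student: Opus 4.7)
The plan is to derive the identity from Gaussian integration by parts, using the covariance structure of the GUE to produce exactly the combinatorial sum defining $\partial_i$. By linearity of both sides in $Q$, I would reduce to the case where $Q$ is a monomial, say $Q = X_{j_1} \cdots X_{j_k}$ in the formal variables $X_1,\dots,X_d$ (the argument for general $Q \in \A_{d,q}$ is the same, noting that $\partial_i$ vanishes on the $Y_j,Y_j^*$ letters). I would then expand the trace in matrix entries:
$$\frac{1}{N}\tr_N\bigl(X_i^N\, Q(X^N)\bigr)=\frac{1}{N}\sum_{a_0,a_1,\dots,a_k}(X_i^N)_{a_0,a_1}\,(X^N_{j_1})_{a_1,a_2}\cdots (X^N_{j_k})_{a_k,a_0}.$$

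Next, I would apply Gaussian integration by parts to the factor $(X_i^N)_{a_0,a_1}$. Treating the diagonal entries as real Gaussians of variance $1/N$ (using \eqref{IPPG}) and the off-diagonal entries through the decomposition $(X_i^N)_{a,b}=(x+\i y)/\sqrt{2N}$ with \eqref{IPPG2}, one obtains in all cases the unified covariance identity
$$\E\bigl[(X_i^N)_{a,b}\,(X_j^N)_{c,d}\bigr]=\frac{1}{N}\,\delta_{i,j}\,\delta_{a,d}\,\delta_{b,c},$$
and correspondingly $\E[(X_i^N)_{a_0,a_1}\,F(X^N)]=\frac{1}{N}\,\E[D_{a_0,a_1}^{\,i}F(X^N)]$, where $D_{a_0,a_1}^{\,i}$ is the derivation on polynomial functions of the entries which sends $(X_j^N)_{c,d}$ to $\delta_{i,j}\,\delta_{a_0,d}\,\delta_{a_1,c}$. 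Applying Leibniz's rule, the derivation acts on each factor $(X^N_{j_\ell})_{a_\ell,a_{\ell+1}}$ (with indices mod $k+1$, $a_{k+1}=a_0$) and produces the term $\delta_{i,j_\ell}\,\delta_{a_0,a_{\ell+1}}\,\delta_{a_1,a_\ell}$.

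For each $\ell$ with $j_\ell = i$, summing out the constrained indices splits the original cyclic product into two independent traces corresponding to the subwords to the left and right of the selected $X_i$-letter in the monomial $X_i Q$. Writing $Q = R_\ell\, X_i\, S_\ell$ for this factorisation, one obtains
$$\E\Bigl[\tfrac{1}{N}\tr_N(X_i^N Q(X^N))\Bigr]=\sum_{\ell:\,j_\ell=i}\E\Bigl[\tfrac{1}{N}\tr_N(R_\ell(X^N))\cdot\tfrac{1}{N}\tr_N(S_\ell(X^N))\Bigr],$$
which is precisely $\E[(\tfrac{1}{N}\tr_N)^{\otimes 2}(\partial_i Q(X^N))]$ by the definition of $\partial_i$ in Definition \ref{application}. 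This is exactly the claimed formula.

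The only real obstacle is bookkeeping: one must justify the Gaussian IBP in the Hermitian (complex) setting carefully so that the real and imaginary parts of off-diagonal entries and the diagonal entries combine cleanly into the single covariance identity above, and then verify that the Leibniz sum over $\ell$ matches the cyclic count of occurrences of $X_i$ in $Q$ rather than in $X_i Q$. Both are routine, and both are treated in \cite{alice}, Lemma 5.4.7, which we may simply invoke.
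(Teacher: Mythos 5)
Your proposal is correct and follows essentially the same route as the paper's proof: expand the trace in matrix entries, apply Gaussian integration by parts to the pulled-out entry of $X_i^N$, and identify the resulting Leibniz sum with the non-commutative derivative $\partial_i Q$. The paper writes the same computation more compactly by recording the entrywise derivative as $\partial_{x_{r,s}^i} Q(X^N) = N^{-1/2}\,\partial_i Q(X^N) \# E_{s,r}$ and then resumming, whereas you unfold it into Kronecker deltas on indices, but the content is identical; the subtlety you flag (the pulled-out factor is not hit by Leibniz, so the sum ranges over occurrences of $X_i$ in $Q$, not $X_iQ$) is indeed the point that makes the identity come out cleanly.
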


\begin{proof}
One can write $X^N_i = \frac{1}{\sqrt{N}} (X_{r,s}^i)_{1\leq r,s\leq N}$ and thus
	
\begin{align*}
	\E\left[ \frac{1}{N}\tr_N(X^N_i\ Q(X^N)) \right] &= \frac{1}{N^{3/2}} \sum_{r,s} \E\left[ X_{r,s}^i\ \tr_N(E_{r,s}\ Q(X^N)) \right] \nonumber \\
	&= \frac{1}{N^{3/2}} \sum_{r,s} \E\left[ \tr_N(E_{r,s}\ \partial_{x_{r,s}^i} Q(X^N)) \right] \\
	&= \frac{1}{N^{2}} \sum_{r,s} \E\left[ \tr_N(E_{r,s}\ \partial_i Q(X^N) \# E_{s,r}) \right] \nonumber \\
	&= \E\left[ \left(\frac{1}{N}\tr_N\right)^{\otimes 2} (\partial_i Q(X^N)) \right] . \nonumber
\end{align*}

\end{proof}

Now to finish this section we state a property that we use several times in this paper:

\begin{prop}
	\label{bornenorme}
	There exist constants $C,D$ and $\alpha$ such that for any $N\in\N$, if $X^N$ is a GUE random matrix of size $N$, then for any 
	$u\geq 0$,
	
	$$ \P\left(\norm{X^N}\geq u+D \right) \leq e^{-\alpha u N} . $$
	
	\noindent Consequently, for any $k\leq \alpha N /2 $,
	
	$$ \E\left[\norm{X^N}^k\right] \leq C^k .$$
	
\end{prop}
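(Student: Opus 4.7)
The plan is to combine two standard ingredients: a uniform bound on $\E[\|X^N\|]$, and Gaussian concentration of measure for Lipschitz functions, and then to integrate the resulting tail estimate to obtain the moment bound.

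First, I would show that $\E[\|X^N\|]$ is bounded uniformly in $N$ by some constant $D_0$. This can be done by an elementary moment estimate: for any integer $p\geq 1$,
$$ \E[\|X^N\|^{2p}] \leq \E[\tr_N((X^N)^{2p})] = N \cdot \E\!\left[\tfrac{1}{N}\tr_N((X^N)^{2p})\right], $$
and the renormalized moment is bounded by the $2p$-th moment of the semicircle law, $C_p = \binom{2p}{p}/(p+1) \leq 4^p$, up to an error of order $N^{-2}$ from the standard Wick formula calculation. Taking $p \asymp \log N$ and $2p$-th roots yields $\E[\|X^N\|] \leq D_0$ for some absolute constant $D_0$ (slightly larger than $2$).

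Second, I would invoke Gaussian concentration. The map $A\mapsto \|A\|$ is $1$-Lipschitz with respect to the operator norm, hence $1$-Lipschitz with respect to the Hilbert--Schmidt norm. Writing $X^N$ as a linear combination of independent real standard Gaussians with coefficients of order $N^{-1/2}$, the map sending these Gaussians to $\|X^N\|$ is $\sqrt{2/N}$-Lipschitz on $\R^{N^2}$. Gaussian concentration (which is a direct consequence of Proposition~\ref{Poinc} via the Herbst argument) then gives a constant $c>0$ with
$$ \P\bigl( \bigl|\|X^N\| - \E[\|X^N\|]\bigr| \geq t \bigr) \leq 2 e^{-cNt^2}, \qquad t \geq 0. $$

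Third, set $D = D_0 + 1$. For $u\geq 0$, writing $t = u+1$ and using $(u+1)^2 \geq 2u$,
$$ \P\bigl(\|X^N\| \geq u + D\bigr) \leq \P\bigl(\|X^N\| - \E[\|X^N\|] \geq u+1\bigr) \leq 2 e^{-cN(u+1)^2} \leq 2 e^{-2cNu} e^{-cN}, $$
which is bounded by $e^{-\alpha u N}$ for an appropriate $\alpha>0$, absorbing the factor of $2e^{-cN}$ by slightly decreasing $\alpha$ (and adjusting $D$ if needed to handle the finitely many small $N$). For the moment bound, by the layer-cake formula
$$ \E[\|X^N\|^k] = \int_0^\infty k t^{k-1} \P\bigl(\|X^N\|\geq t\bigr)\,dt \leq D^k + \int_0^\infty k(u+D)^{k-1} e^{-\alpha u N}\,du. $$
Splitting $(u+D)^{k-1} \leq 2^{k-1}(D^{k-1} + u^{k-1})$ and using $\int_0^\infty u^{k-1} e^{-\alpha u N} du = \Gamma(k)/(\alpha N)^k \leq (k/(\alpha N))^k$ via Stirling, the constraint $k \leq \alpha N/2$ makes the second integral bounded by $(2/e)^k$ times a polynomial factor, and everything can be absorbed into a single $C^k$.

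The only step with any subtlety is the uniform bound on $\E[\|X^N\|]$; all the remaining work is the standard Gaussian concentration machinery applied to a trivially Lipschitz functional and a routine moment integration. No substantial obstacle is expected.
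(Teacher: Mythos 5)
Your proof is correct, and the second half (the layer-cake integration of the tail, splitting $(u+D)^{k-1}$ and using Stirling with the constraint $k\le \alpha N/2$) is essentially the same computation the paper performs. The genuine difference is in the tail bound itself: the paper simply cites Lemma 2.2 of \cite{G-MS} for the exponential estimate, whereas you rederive it from scratch via (i) a uniform bound on $\E\|X^N\|$ obtained by comparing $\E[\tr_N((X^N)^{2p})]$ to the semicircle moments with $p\asymp\log N$, and (ii) Gaussian concentration for the $1$-Lipschitz functional $A\mapsto\|A\|$. This is the standard proof of that lemma, so nothing is lost, and your version has the virtue of being self-contained; the cost is that step (i) is a little delicate and you state it loosely — the Wick/Harer--Zagier error terms are not literally $O(N^{-2})$ uniformly in $p$, and to push $p$ up to $\asymp\log N$ you need a genuine non-asymptotic bound on $\E[\tr_N((X^N)^{2p})]$ (e.g. something like $\E[\frac{1}{N}\tr_N((X^N)^{2p})]\le 4^p$ for $p\lesssim\sqrt N$, which does hold for GUE but deserves a precise citation or a short combinatorial argument). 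With that one point tightened, your route is a clean elementary replacement for the paper's citation.
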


\begin{proof}
	The first part is a direct consequence of Lemma 2.2 from \cite{G-MS} in the specific case of the GUE. As for the second part, if 
	$k\leq \alpha N /2$, then we have, 
	\begin{align*}
	\E\left[\norm{X^N}^k\right] &= k \int_0^{\infty} \P\left(\norm{X^N}\geq u\right) u^{k-1} du \\
	&\leq k D^k + k\int_D^{\infty} e^{-N \alpha (u-D)} u^{k-1} du \\
	&\leq k D^k + k e^{DN\alpha} \int_D^{\infty} e^{(k-N \alpha)u} du \\
	&\leq k D^k + \frac{2k}{\alpha N} e^{kD}\leq C^k
	\end{align*}
for some $C$ independent of $N$ and $k$. In the third line we used  that $\ln|u|\le u$ for all positive real numbers, 	
	
\end{proof}

\section{Proof of the main result Theorem \ref{imp1}}
\label{mainsec}

\subsection{Overview of the proof}
\label{overview}

Given two families of non-commutative random variables, $(X^N\otimes I_M, Z^{NM})$ and $(x\otimes I_M, Z^{NM})$, and we want to 
study the difference between their distributions. As mentioned in the introduction, the main idea of the proof is to interpolate these two families 
with the help of a free Ornstein-Uhlenbeck process $X^{t,N} =(X^{t,N}_i)_i$ started in deterministic matrices $(X^{N,0}_i)_i$ of size $N$. 
However, as we shall explain in this subsection, we are only interested into the law of the marginals at time $t$ of this process, hence we do not 
need to define it globally. We refer to \cite{BianeSpeicher2001} for more information about it. 
Some properties of this process are well understood. For example, like in the classical case, we know its distribution at time $t$. In the classical case, if $(S_t)_t$ was an Ornstein-Uhlenbeck 
process, then it is well-known that for any function $f$ and $t\geq 0$, 

$$ \E[f(S_t)] = \E[f(e^{-t/2} S_0 + (1-e^{-t})^{1/2} X)] $$

\noindent where $X$ is a centered Gaussian random variable of variance $1$ independent of $S_0$. Likewise, if $\mu$ is the trace on the 
$\mathcal{C}^*$-algebra which contains $(X^{t,N})_{t\geq 0}$, we have for any function $f$ such that this is well-defined and $t\geq 0$,
\begin{equation}
	\mu(f(X^{t,N})) = \tau_N\left(f(e^{-t/2} X^{N,0} + (1-e^{-t})^{1/2} x)\right)
\end{equation}

\noindent where $x$ is a system of free semicircular variables, free from $\M_N(\C)$. Thus a free Ornstein-Uhlenbeck process started at time 
$t$ has the same distribution in the sense of Definition \ref{freeprob} as the family

$$ e^{-t/2} X^{N,0} + (1-e^{-t})^{1/2} x \ . $$

\noindent Consequently, from now on, we write $X^{t,N} = e^{-t/2} X^{N,0} + (1-e^{-t})^{1/2} x$. 
Since our aim in this subsection is not to give a rigorous proof but to outline the strategy used in subsection \ref{technical}, we also assume 
that we have no matrix $Z^{NM}$ and that $M=1$. Now under the assumption that this is well-defined, if 
$Q\in \A_{d,0} = \C\langle X_1,\dots,X_d\rangle$,

$$	\E\left[\frac{1}{N}\tr_{N}\Big(Q\left(X^N\right)\Big)\right] - \tau\Big(Q\left(x\right)\Big) = - \int_{0}^{\infty} \E\left[\frac{d}{dt}\Big(\tau_N\left(Q(X^{t,N})\right)\Big)\right]\ dt\ .$$
On the other hand, using the free Markov property of the free Brownian motion, we have for $Q\in \mathcal A_{d,0}$

$$ \frac{d}{dt} \tau_N(Q(X^{t,N})) = - \frac{1}{2} \sum_i\left\{ \tau_{N}\left( X_i^{t,N}  (D_iQ)(X^{t,N})\right) - \tau_N\otimes\tau_N\left( (\partial_i D_iQ) (X^{t,N}) \right)\right\} .$$

\noindent One can already recognize the Schwinger-Dyson equation, indeed thanks to Proposition \ref{SD}, one can see that

$$ \E\left[\frac{d}{dt} \tau_N(Q(X^{t,N})) \right]\bigg|_{t=0} = - \frac{1}{2} \sum_i \E\left[\tau_{N}\left( X_i^N\  (D_iQ)(X^N)\right) - \tau_N\otimes\tau_N\left( (\partial_i D_iQ) (X^N) \right) \right] = 0 \ .$$

\noindent And then, thanks to Proposition \ref{SDE},

$$ \E\left[\frac{d}{dt} \tau_N(Q(X^{t,N})) \right] \bigg|_{t=\infty}= - \frac{1}{2} \sum_i \left\{\tau\left( x_i\ (D_iQ)(x)\right) - \tau\otimes\tau\left( (\partial_i D_iQ) (x) \right)\right\} = 0 \ .$$

\noindent However what happens at time $t$ is much harder to estimate and is the core of the proof. The main idea to deal with this issue is 
to view the family $(X^N,x)$ as the asymptotic limit when $k$ goes to infinity of the family $(X^N\otimes I_k, R^{kN})$ where $R^{kN}$ are 
independent GUE matrices of size $kN$ and independent of $X^N$.

Another issue is that to prove Theorem \ref{imp1}, we would like to set $Q = f(P)$ but since $f$ is not polynomial this means that we need to 
extend the definition of operators such as $\partial_i$. In order to do so we assume that there exist $\mu$ a measure on $\R$ such that,

$$ \forall x\in\R,\quad f(x) = \int_{\R} e^{\i xy}\ d\mu(y) \ .$$

\noindent While we have to assume that the support of $\mu$ is indeed on the real line, $\mu$ can be a complex measure. However we will 
usually work with measure such that $|\mu|(\R)$ is finite. Indeed under this assumption we can use Fubini's Theorem, and we get

$$	\E\left[\frac{1}{M}\tr_{N}\Big(f\left(P(X^N)\right)\Big)\right] - \tau\Big(f\left(P(x)\right)\Big) = \int_{\R}	\left\{ \E\left[\frac{1}{N}\tr_{N}\Big(e^{\i y P\left(X^N\right)}\Big)\right] - \tau\Big(e^{\i y P\left(x\right)}\Big)\right\}\ d\mu(y)\ . $$

\noindent We can then set $Q = e^{\i y P}$. And even though this is not a polynomial function, since it is a power series, most of the 
properties associated to polynomials remain true with some assumption on the convergence. The main difficulty with this method is that we 
need to find a bound which does not depend on too high moments of $y$. Indeed  terms of the form
$$ \int_{\R}|y|^l\ d|\mu|(y) $$
 appear in our estimates. Thanks to Fourier integration we can relate the exponent $l$ to the regularity of the function $f$, thus we want to 
 find a bound with $l$ as small as possible. It turns out that with our proof $l = 4$.

\subsection{Proof of Theorem \ref{imp1}}

\label{technical}

In this section we focus on proving Theorem \ref{imp1} from which we deduce all of the important corollaries, it will be a consequence of the 
Theorem below:

\begin{theorem}
\label{imp2}
Let the following objects be given,
\begin{itemize}
	\item $X^N = (X_1^N,\dots,X_d^N)$ independent $GUE$ matrices of size $N$,
	\item $x = (x_1,\dots,x_d)$ a system of free semicircular variables,
	\item $Z^{NM} = (Z_1^{NM},\dots,Z_q^{NM})$ deterministic matrices,
	\item $P\in \A_{d,q}$ a polynomial that we assume to be self-adjoint,
	\item $f:\R\mapsto\R$ such that there exists a measure on the real line $\mu$ with $\int (1+y^4)\  d|\mu|(y)\ < +\infty$ and for any $x\in\R$,
			$$ f(x) = \int_{\R} e^{\i x y}\ d\mu(y) \ . $$
\end{itemize}

\noindent Then, there exists a polynomial $L_P$ which only depends on $P$ such that for any $N,M$,

\begin{align*}
	\Bigg| &\E\left[\frac{1}{MN}\tr_{MN}\Big(f\left(P\left(X^N\otimes I_M,Z^{NM},{Z^{NM}}^*\right)\right)\Big)\right] - \tau_N\otimes\tau_M\Big(f\left(P\left(x\otimes I_M,Z^{NM},{Z^{NM}}^*\right)\right)\Big) \Bigg| \\
	&\leq \frac{M^2}{N^2} L_P\left(\norm{Z^{NM}}\right) \int_{\R} (|y|+y^4)\  d|\mu|(y) \ .
\end{align*}

\end{theorem}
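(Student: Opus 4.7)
The first step is a Fourier reduction. Substituting $f(x) = \int_{\R} e^{\i x y}\,d\mu(y)$ and applying Fubini's theorem (legitimate because $|\mu|(\R) \leq \int (1+y^4)\,d|\mu|(y) < +\infty$), the task reduces to establishing, for each fixed $y\in\R$, the pointwise bound
\begin{equation*}
\left|\E\left[(\tau_N\otimes\tau_M)\big(e^{\i y P(X^N\otimes I_M,Z^{NM},{Z^{NM}}^*)}\big)\right] - (\tau_N\otimes\tau_M)\big(e^{\i y P(x\otimes I_M,Z^{NM},{Z^{NM}}^*)}\big)\right| \leq \frac{M^2}{N^2}\, L_P(\norm{Z^{NM}})\, (|y|+y^4),
\end{equation*}
after which one integrates against $|\mu|$. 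The derivations $\partial_i$ and $D_i$ extend to the power series $e^{\i y P}$ through Duhamel's formula $\partial_i e^{\i y P} = \i y\int_0^1 e^{\i y s P}\otimes e^{\i y (1-s)P}\,ds\,\#\,\partial_i P$, which is the source of the polynomial powers of $y$ in the final estimate.

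To obtain the pointwise estimate, I would interpolate between $X^N$ and $x$ along the free Ornstein--Uhlenbeck trajectory $X_i^{t,N} = e^{-t/2}X_i^N + (1-e^{-t})^{1/2}x_i$, where $x$ is taken free from $\M_N(\C)$. The fundamental theorem of calculus expresses the difference above as $-\int_0^\infty \frac{d}{dt}\E\left[(\tau_N\otimes\tau_M)\big(e^{\i y P(X^{t,N}\otimes I_M,Z^{NM},{Z^{NM}}^*)}\big)\right]dt$, and differentiating through Duhamel splits the integrand, for each index $i$, into a Gaussian piece proportional to $-\tfrac{1}{2}e^{-t/2}X_i^N$ and a free-probabilistic piece proportional to $\tfrac{1}{2}e^{-t}(1-e^{-t})^{-1/2}x_i$. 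Applying the GUE Schwinger--Dyson identity of Proposition \ref{SD} to the former and the tensor form of the free Schwinger--Dyson identity of Proposition \ref{SDE} to the latter, the chain-rule factors $e^{-t/2}$ and $(1-e^{-t})^{1/2}$ combine cleanly with the prefactors to produce a common coefficient $\tfrac{1}{2}e^{-t}$ in front of a principal term of the shape $(\tau_N\otimes\tau_M)^{\otimes 2}$ applied to $\partial_i D_i e^{\i y P}$; the two principal contributions carry opposite signs and cancel.

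What is left is the defect of GUE Schwinger--Dyson at finite $N$: Proposition \ref{SD} yields a double partial trace $(\tfrac{1}{N}\tr_N)^{\otimes 2}$ on $\partial_i$ applied to an element of $\M_N(\C)\otimes\M_M(\C)\otimes\M_N(\C)\otimes\M_M(\C)$, rather than the free factorization required to match the semicircular side; computed explicitly, this discrepancy lifts the standard $1/N^2$ genus-expansion correction to an $M^2/N^2$ correction because the two partial traces split $\M_N(\C)$ away from $\M_M(\C)$ on each leg and force two separate $\M_M(\C)$-valued couplings between the two halves of $\partial_i$. This amplification disappears exactly when $Z^{NM} = I_N\otimes Y^M$ with commuting $Y^M_j$, in accordance with the sharper assertion of Theorem \ref{imp1}. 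Making the residual estimate rigorous with polynomial growth at most $|y|+y^4$ in $y$ is the main technical obstacle: following the approach outlined in Subsection \ref{overview}, one replaces the free semicircular $x$ by an independent GUE matrix $R^{kN}$ of size $kN$ and lets $k\to\infty$ at the end, which brings the computation entirely within the scope of Proposition \ref{SD} and allows the defect to be controlled by iterating the Schwinger--Dyson identity once more, together with a second-order Taylor remainder for the Duhamel expansion that produces the two additional powers of $y$ beyond those delivered by $\partial_i D_i$. Operator-norm control through Proposition \ref{bornenorme} then provides the polynomial dependence on $\norm{Z^{NM}}$ that defines $L_P$, and the uniform boundedness of the interpolation coefficients over $[0,\infty)$ makes the $t$-integration harmless.
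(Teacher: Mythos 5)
Your high-level architecture — reduction to $e^{\i y P}$ by Fourier/Fubini, interpolation along the free Ornstein--Uhlenbeck path $X^{t,N}$, differentiation in $t$ and Duhamel, and the simultaneous application of the Gaussian Schwinger--Dyson identity (Proposition \ref{SD}) to the $X^N$ piece and the free one (Proposition \ref{SDE}) to the $x$ piece, with the semicircular variables then replaced by a large independent GUE block $R^{kN}$ and $k\to\infty$ taken at the end — matches the paper's Lemma \ref{fourier} and Proposition \ref{limite_trace}. What you call the ``defect'' of the finite-$N$ Schwinger--Dyson identity is exactly the quantity $\mathcal{S}^{\alpha,\beta}_{N,t}(A,B,C,D)$ of Definition \ref{defS}, and you correctly identify that the whole theorem hinges on proving that this defect is $O\!\left((1+y^{2})M^{2}/N^{2}\right)$, which is Lemma \ref{core}.

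There is, however, a genuine gap in your mechanism for bounding the defect. You propose to control $\mathcal{S}^{\alpha,\beta}_{N,t}$ by ``iterating the Schwinger--Dyson identity once more, together with a second-order Taylor remainder for the Duhamel expansion''. This is not how the paper closes the argument, and by itself it does not produce the $M^{2}/N^{2}$ bound. After the $R^{kN}$ substitution, the paper (Lemma \ref{grcalc}) decomposes the matrix-form trace into a \emph{covariance} piece and a \emph{product-of-expectations} piece (equations \eqref{CS}--\eqref{CS2}). The covariance piece — the dominant contribution — is estimated via the \emph{Poincar\'e inequality} for the Gaussian entries of $R^{kN}$ (Proposition \ref{Poinc}), and this is precisely where the $1/N^{2}$ and the extra powers of $y$ originate; the $M^{2}$ amplification arises from a Gaussian integration by parts on the $\M_M(\C)$ factor, replacing $\frac{1}{M}\sum_{u,v}e_u e_v^*\,(\cdots)\,e_v e_u^*$ by an auxiliary independent GUE $K_M$ of size $M$ in equation \eqref{ccomb2}, and this is also exactly the step that collapses, removing the $M^{2}$, in the commuting $Z^{NM}=I_N\otimes Y^M$ case. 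Iterating Schwinger--Dyson \emph{is} used, but only in the auxiliary Lemma \ref{nondiag}, to show that the remaining off-diagonal product-of-expectations term (involving $P_{1,2}=I_N\otimes E_{1,2}\otimes I_M$) vanishes as $k\to\infty$ for fixed $N$. Without the Poincar\'e/covariance step your sketch has no quantitative source for the $M^{2}/N^{2}$ rate, so the outline as written cannot be completed into a proof.
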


The proof is a direct corollary of Lemmas \ref{fourier} and \ref{core} below. The first one shows that the crux of the proof lies in understanding 
the following quantity:

\begin{defi}\label{defS}
	Let the following objects be given,
	\begin{itemize}
		\item $E_{r,s}$ the matrix whose every coefficient is $0$ but the one in position $(r,s)$ which is $1$,
		\item $\alpha,\beta\in [0,1]$,
		\item $A,B,C,D\in \A_{d,q}$ monomials,
		\item $ X_t^N = e^{-t/2} X^N + (1-e^{-t})^{1/2} x $
		\item $Z_t^N = (X_t^N\otimes I_M, Z^{NM},{Z^{NM}}^*)$,
		\item $S_t = (A e^{\i \beta y P} B)(Z_t^N) $,
		\item $V_t = (C e^{\i \alpha y P} D)(Z_t^N) $.
	\end{itemize}
	
	\noindent Then we define:

	\begin{align*}
	\mathcal{S}_{N,t}^{\alpha,\beta}\Big(A,B,C,D\Big) =& \ \E\left[ \frac{1}{N} \sum_{1\leq s,r\leq N} \tau_N\otimes\tau_M\Big( E_{s,r}\otimes I_M\times S_t\times E_{r,s}\otimes I_M\times V_t \Big) \right] \\
	&- \E\left[ \tau_M\Big( (\tau_N\otimes I_M)(S_t)\ (\tau_N\otimes I_M)(V_t) \Big) \right] .
	\end{align*}	
\end{defi}

We can now state the next lemma which explains why this object appears:

\begin{lemma}
	\label{fourier}
	
	Let $f$ be a function such that there exists a measure $\mu$ such that for any $x\in \R$,
	
	$$ f(x) = \int_{\R} e^{ixy} d\mu(y) $$
	
	\noindent We also assume that $\int_{\R} (1 + y^4) d|\mu|(y) < \infty $. Then one can write
	
	$$ \E\left[\frac{1}{MN}\tr\Big(f\left(P\left(X^N\otimes I_M,Z^{NM},{Z^{NM}}^*\right)\right)\Big)\right] - \tau_N\otimes\tau_M\Big(f\left(P\left(x\otimes I_M,Z^{NM},{Z^{NM}}^*\right)\right)\Big) $$
	
	\noindent as a finite linear combination of terms of the following kinds :
	
	\begin{align}\label{qa}
	\int_{0}^{\infty} e^{-t}\ \int y^2\ \int_0^1 \S^{\alpha,1-\alpha}(A,B,C,D) d\alpha\ d\mu(y)\ dt \ ,
	\end{align}
	
	\noindent and 
	\begin{align}\label{qb}
	\int_{0}^{\infty} e^{-t}\ \int y\ \S^{1,0}(A,B,C,D) d\mu(y)\ dt \ 
	\end{align}
	where the monomials $A,B,C,D\in \A_{d,q}$  and the coefficients of the linear combination are uniquely determined by $P$.
\end{lemma}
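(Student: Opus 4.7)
The plan is to combine the Fourier representation of $f$, the free Ornstein--Uhlenbeck interpolation from Subsection \ref{overview}, and a double integration by parts---Gaussian on the GUE side via Proposition \ref{SD}, Schwinger--Dyson on the free side via Proposition \ref{SDE}. Setting $Z_t^N=(X_t^N\otimes I_M,Z^{NM},{Z^{NM}}^*)$ with $X_t^N=e^{-t/2}X^N+(1-e^{-t})^{1/2}x$ and $\Phi_y(t)=\E[\tau_N\otimes\tau_M(e^{\i yP(Z_t^N)})]$, Fubini's theorem---justified by $\int(1+y^4)\,d|\mu|(y)<\infty$---rewrites the left-hand side of the Lemma as $\int_{\R}(\Phi_y(0)-\Phi_y(\infty))\,d\mu(y)=-\int_{\R}\int_0^\infty\Phi_y'(t)\,dt\,d\mu(y)$, so the task reduces to computing $\Phi_y'(t)$ and bringing it into the announced form.

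The derivative is computed via Duhamel: although $\tfrac{d}{dt}e^{\i yP(Z_t^N)}$ is the $\alpha$-integral of $\i y\,e^{\i\alpha yP}\tfrac{dP(Z_t^N)}{dt}e^{\i(1-\alpha)yP}$, cyclicity of $\tau_N\otimes\tau_M$ recombines the two exponentials into $e^{\i yP(Z_t^N)}$, and
$$\Phi_y'(t)=\i y\sum_{i,k}\E\Bigl[\tau_N\otimes\tau_M\Bigl(\tfrac{dX_t^{N,i}\otimes I_M}{dt}\,B_k^i\,e^{\i yP(Z_t^N)}\,A_k^i\Bigr)\Bigr],$$
where $\partial_iP=\sum_kA_k^i\otimes B_k^i$ in $\A_{d,q}\otimes\A_{d,q}$ and every factor is evaluated at $Z_t^N$. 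Plugging in $\tfrac{dX_t^{N,i}}{dt}=-\tfrac{1}{2}e^{-t/2}X_i^N+\tfrac{1}{2}e^{-t}(1-e^{-t})^{-1/2}x_i$ separates this into a GUE piece (to which I apply Proposition \ref{SD}) and a free semicircular piece (to which I apply Proposition \ref{SDE}). The chain-rule factors $e^{-t/2}$ and $(1-e^{-t})^{1/2}$ produced when transferring the derivative through $X_t^N$ combine with these prefactors into respective weights $-\tfrac{1}{2}e^{-t}$ and $+\tfrac{1}{2}e^{-t}$---the algebraic miracle that puts the two integration-by-parts formulas on the same footing.

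Each resulting $\partial_i$ acts on $W_k^i:=B_k^i\,e^{\i yP(Z_t^N)}\,A_k^i$ and splits, via Leibniz, into polynomial contributions (from $\partial_iA_k^i,\partial_iB_k^i$) and an exponential contribution; the latter, by a second Duhamel, equals $\i y\int_0^1(e^{\i\alpha yP}\otimes e^{\i(1-\alpha)yP})\#\partial_iP\,d\alpha$ and brings in an extra factor $\i y$ together with a genuine $\alpha$-integral. The decisive observation is that Gaussian integration by parts produces exactly the first summand $\tfrac{1}{N}\sum_{r,s}\tau_N\otimes\tau_M((E_{s,r}\otimes I_M)S(E_{r,s}\otimes I_M)V)$ of Definition \ref{defS}, whereas Schwinger--Dyson produces exactly the second summand $\tau_M((\tau_N\otimes I_M)(S)(\tau_N\otimes I_M)(V))$, with identical $S$ and $V$; with the signs from the previous paragraph the GUE and semicircular contributions merge into $-\tfrac{1}{2}e^{-t}\mathcal{S}_{N,t}^{\alpha,\beta}(A,B,C,D)$. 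The exponential branches thus produce the integrand $y^2\int_0^1\mathcal{S}_{N,t}^{\alpha,1-\alpha}(A,B,C,D)\,d\alpha$ of \eqref{qa}, whereas the polynomial branches---after using the symmetry $\mathcal{S}_{N,t}^{\alpha,\beta}(A,B,C,D)=\mathcal{S}_{N,t}^{\beta,\alpha}(C,D,A,B)$ to normalise one exponent to zero---produce the integrand $y\,\mathcal{S}_{N,t}^{1,0}(A,B,C,D)$ of \eqref{qb}. A last application of Fubini's theorem rearranges the $t,y,\alpha$ integrals into the stated order.

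The main obstacle will be the combinatorial bookkeeping of the Leibniz expansion of $\partial_iW_k^i$ and the verification that every resulting term recombines into some $\mathcal{S}_{N,t}^{\alpha,\beta}(A,B,C,D)$ with no leftover remainders, so that the matching weights on the GUE and semicircular sides effect the expected cancellation of leading terms and leave precisely the ``error'' captured by $\mathcal{S}_{N,t}^{\alpha,\beta}$. The analytic justifications---differentiating under the expectation, applying the matrix Duhamel formula pointwise, and exchanging the $t,y,\alpha$ integrals---are routine given the integrability $\int(1+y^4)\,d|\mu|(y)<\infty$ and the operator-norm bounds on the evaluations of $A_k^i,B_k^i$ at $Z_t^N$ supplied by Proposition \ref{bornenorme}.
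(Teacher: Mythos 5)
Your proposal is correct and follows essentially the same route as the paper: you set up the same interpolation $X_t^N = e^{-t/2}X^N + (1-e^{-t})^{1/2}x$, differentiate the Fourier-dampened trace in $t$ (using cyclicity to recombine the Duhamel exponentials), and then split the $\dot X_t^N$ term into the GUE piece handled by Gaussian integration by parts (Proposition \ref{SD}) and the semicircular piece handled by Schwinger--Dyson (Proposition \ref{SDE}), observing that the chain-rule factors cancel the prefactors so that the two contributions pair up exactly into $\mathcal{S}_{N,t}^{\alpha,\beta}$, with the Leibniz branches through $\partial_i A,\partial_i B$ giving the $y$-weighted terms \eqref{qb} and the exponential branch giving the $y^2$-weighted $\alpha$-integral \eqref{qa}. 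The only point you gloss over is that on the semicircular side Proposition \ref{SDE} is stated for polynomials, so the paper must first expand $e^{\i y P}$ as a power series and then resum using the beta integral $\tfrac{1}{k!} = \int_0^1 \tfrac{\alpha^{l-1}(1-\alpha)^{k-l}}{(l-1)!(k-l)!}\,d\alpha$ rather than applying Duhamel directly; the end result is the same, but this justification step is not automatic.
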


\begin{proof}
	
	First, we define the natural interpolation between the trace of matrices at size $N$ and the trace of semicircular variables,
	
	\begin{equation*}
	s(t,y) = \E\left[ \tau_N\otimes\tau_M\left(e^{\i y P(Z_t^N)}\right) \right]\ .
	\end{equation*}
	
	\noindent By definition of $f$ we have
	
	\begin{eqnarray*} \int_{\R} s(0,y)\  d\mu(y) &=& \E\left[\frac{1}{MN}\tr_{MN}\Big(f(P(X^N\otimes I_M,Z^{NM},{Z^{NM}}^*))\Big)\right] ,\\
	 \int_{\R} s(\infty,y)\  d\mu(y) &=& \tau_N\otimes\tau_M\Big(f(P(x\otimes I_M,Z^{NM},{Z^{NM}}^*))\Big) .\end{eqnarray*}
	
	\noindent Thus under the assumption that this is well-defined, we have
	
	\begin{align}
		\E&\left[\frac{1}{MN}\tr_{MN}\Big(f\left(P\left(X^N\otimes I_M,Z^{NM},{Z^{NM}}^*\right)\right)\Big)\right] - \tau_N\otimes\tau_M\Big(f\left(P\left(x\otimes I_M,Z^{NM},{Z^{NM}}^*\right)\right)\Big)  \nonumber \\
		&= - \int_{0}^{\infty} \int_{\R} \partial_ts(t,y)\  d\mu(y)\ dt \label{integ} \ .
	\end{align}
	
	\noindent We compute
	
	\begin{align*}
	\label{impquant}
	\partial_ts(t,y) = \i y \frac{e^{-t}}{2}\ \E\left[ \tau_N\otimes\tau_M\left(e^{\i y P(Z_t^N)}\ \sum_i \partial_i P(Z_t^N) \# \left( \left( \frac{x_i}{(1-e^{-t})^{1/2}} - e^{t/2} X_i^N\right) \otimes I_M \right) \right) \right] .
	\end{align*}
	
	\noindent Since we assumed  that $\mu$ is such that $\int (1+y^4) d\mu(y) < +\infty $ and that since $X_i^N$ and $x_i$ have all 
	moments uniformly bounded by Lemma \ref{bornenorme}, we can find a constant $C$ independent from $y$ and $t$ such that
	
	$$ |\partial_ts(t,y)| \leq C\ y e^{-t/2} \, $$	
	
	\noindent we can deduce that \eqref{integ} is well-defined. Besides, writing $P=\sum c_Q(P) Q$ with monomials $Q\in \mathcal A_{d,q}$, we get
	\begin{equation}\label{exps}\partial_ts(t,y)= \i y \frac{e^{-t}}{2}\sum c_Q(P)\sum_{Q= B X_i A}
	 \E\left[ \tau_N\otimes\tau_M\left(e^{\i y P(Z_t^N)}\ B(Z_t^N) \left( \frac{x_i}{(1-e^{-t})^{1/2}} - e^{t/2} X_i^N\right) \otimes I_M A(Z_t^N)\right) \right].\end{equation}
	Hence, $\partial_t s$ is a finite linear combination of terms of the form
	\begin{equation}ye^{-t} \label{dif}S_t(A,B)= ye^{-t} S_t^1(A,B)- ye^{-t} S_t^2(A,B)\end{equation}
	with $$S^1_t(A,B)=S_t(A,B, (1-e^{-t})^{-1/2}x_i)\mbox{ and }S^2_t(A,B)=S_t(A,B, e^{t/2} X_i^N)$$
	where
	\begin{equation}
	\label{prob}
	S_t(A,B,G)= \E\left[ \tau_N\otimes\tau_M\left( A(Z_t^N)\  e^{\i y P(Z_t^N)} B(Z_t^N) G\otimes I_M \right) \right].
	\end{equation}
	
	\noindent  We first study $S^2_t(A,B)$. We denote by $Q= A e^{\i y P} B$. 
	We want to use Gaussian integration by part : if we set $\sqrt{N} X_i^N = (x_{s,r}^i)_{1\leq s,r\leq N}$, then with $\partial_{x_{s,r}^i}$ as 
	in equations \eqref{IPPG} and \eqref{IPPG2}, thanks to Duhamel formula
	
	\begin{align}
	\sqrt{N} e^{t/2}\ \partial_{x_{s,r}^i} Q(Z_t^N) =& \  \partial_i A(Z_t^N) \# (E_{r,s}\otimes I_M)\ e^{\i y P(Z_t^N)} B(Z_t^N)\nonumber  \\
	&+ \i y \int_0^1 A(Z_t^N) e^{\i (1-\alpha) y P(Z_t^N)}\ \partial_i P(Z_t^N) \# (E_{r,s}\otimes I_M)\ e^{\i \alpha y P(Z_t^N)} B(Z_t^N)\  d\alpha\label{deri} \\
	&+ A(Z_t^N) e^{\i y P(Z_t^N)}\ \partial_i B(Z_t^N) \# (E_{r,s}\otimes I_M) \.\nonumber
	\end{align}
	
	\noindent Consequently, expanding in $S^2_t(A,B)$ the product by $X_i^N$ in terms of its entries, we have	
	
	\begin{align}
	\label{premierterm}
	S^2_t(A,B)=&e^{t/2} \E\left[ \tau_N\otimes\tau_M\left( ( A e^{\i y P} B)(Z_t^N)\ X_i^N\otimes I_M \right) \right] \nonumber \\
	=& \ N^{-1/2} e^{t/2} \sum_{1\leq s,r\leq N} \E\left[ x^i_{s,r}\ \tau_N\otimes\tau_M\Big( E_{s,r}\otimes I_M\ ( A e^{\i y P} B)(Z_t^N) \Big) \right] \nonumber \\
	=& \ \frac{1}{N} \sum_{1\leq s,r\leq N} \E\left[ \tau_N\otimes\tau_M\Big( E_{s,r}\otimes I_M\ e^{t/2} \partial_{x_{s,r}^i} Q(Z_t^N) \Big) \right] \nonumber \\
	=& \ \E\left[ \frac{1}{N} \sum_{1\leq s,r\leq N} \tau_N\otimes\tau_M\Big( E_{s,r}\otimes I_M\ \partial_i A \# (E_{r,s}\otimes I_M)\ e^{\i y P} B \Big) \right] \nonumber \\
	&+ \i y \int_0^1 \E\left[ \frac{1}{N} \sum_{1\leq s,r\leq N} \tau_N\otimes\tau_M\Big( E_{s,r}\otimes I_M\ A e^{\i (1-\alpha) y P}\ \partial_i P \# (E_{r,s}\otimes I_M)\ e^{\i \alpha y P} B \Big) \right] d\alpha \nonumber \\
	&+ \E\left[ \frac{1}{N} \sum_{1\leq s,r\leq N} \tau_N\otimes\tau_M\Big( E_{s,r}\otimes I_M\ A e^{\i y P}\ \partial_i B \# (E_{r,s}\otimes I_M) \Big) \right] 
	\end{align}
	where $A,B,P$ are evaluated at $Z^N_t$.
	\noindent To deal with $S^1_t(A,B)$, since a priori we defined free integration by parts only for polynomials, we expand the exponential 
	as a power series,
	
	\begin{align*}
	&\tau_N\otimes\tau_M\left( A(Z_t^N)\  e^{\i y P(Z_t^N)}\ B(Z_t^N) \frac{x_i \otimes I_M}{(1-e^{-t})^{1/2}} \right) \\
	&= \sum_{k\geq 0} \frac{1}{k!}\ \tau_{N}\otimes\tau_M \left( A(Z_t^N)\  (\i y P(Z_t^N))^k\ B(Z_t^N) \frac{x_i \otimes I_M}{(1-e^{-t})^{1/2}} \right) .
	\end{align*}
	
	\noindent We define $(\tau_N\otimes I_M) \bigotimes (\tau_N\otimes I_M) : (\A_N\otimes \M_M(\C))^{\otimes 2} \to M_M(\C)$ the linear application which is defined  on simple tensor by $(\tau_N\otimes I_M) \bigotimes (\tau_N\otimes I_M)(A\otimes B) = (\tau_N\otimes I_M)(A) \times (\tau_N\otimes I_M)(B)$. Hence, thanks to Proposition \ref{SDE}, with the convention that $ A\times (B\otimes C)\times D = (AB)\otimes (CD) $, we have 
	
	\begin{align*}
		\tau_{N}&\otimes\tau_M \left( A(Z_t^N)\  (\i y P(Z_t^N))^k\ B(Z_t^N) \frac{x_i \otimes I_M}{(1-e^{-t})^{1/2}} \right) \\
		=&\ \tau_M\left( (\tau_N\otimes I_M) \bigotimes (\tau_N\otimes I_M) \Big( \partial_i A(Z_t^N)\  (\i y P(Z_t^N))^k\ B(Z_t^N) \Big) \right) \\
		&+ \i y \tau_M\left( (\tau_N\otimes I_M) \bigotimes (\tau_N\otimes I_M) \Big( A(Z_t^N)\  (\i y)^{k-1} \sum_{1\leq l\leq k}P(Z_t^N)^{l-1} \partial_i P(Z_t^N)\ P(Z_t^N)^{k-l} \ B(Z_t^N) \Big) \right) \\
		&+ \tau_M\left( (\tau_N\otimes I_M) \bigotimes (\tau_N\otimes I_M) \Big( A(Z_t^N)\  (\i y P(Z_t^N))^k\ \partial_i B(Z_t^N) \Big) \right) . \\
	\end{align*}
	
	\noindent Now we can use the fact that 
	
	$$ \frac{1}{k!} = \int_0^1 \frac{ \alpha^{l-1} (1-\alpha)^{k-l}}{(l-1)! (k-l)!} d\alpha \,$$
	
	\noindent to deduce that
	
	\begin{align*}
		&\tau_M\left( (\tau_N\otimes I_M) \bigotimes (\tau_N\otimes I_M) \Big( A(Z_t^N)\ \sum_{k\geq 1} \frac{(\i y)^{k-1}}{k!} \sum_{l=1}^k P(Z_t^N)^{l-1} \partial_i P(Z_t^N)\ P(Z_t^N)^{k-l} B(Z_t^N) \Big) \right) \\
		&= \int_0^1 \sum_{k\geq 1} \sum_{l=1}^k \tau_M\Bigg( (\tau_N\otimes I_M) \bigotimes (\tau_N\otimes I_M) \Bigg( A(Z_t^N)\  \frac{(\i y \alpha P(Z_t^N))^{l-1}}{(l-1)!}\ \partial_i P(Z_t^N) \\
		&\quad\quad\quad\quad\quad\quad\quad\quad\quad\quad\quad\quad\quad\quad\quad\quad\quad\quad\quad\quad\quad \frac{(\i y(1-\alpha)P(Z_t^N))^{k-l}}{(k-l)!} B(Z_t^N) \Bigg) \Bigg)d\alpha \\
		&= \int_0^1  \tau_M\left( (\tau_N\otimes I_M) \bigotimes (\tau_N\otimes I_M) \Big( A(Z_t^N)\ e^{\i (1-\alpha) y P(Z_t^N)}\ \partial_i P(Z_t^N)\ e^{\i \alpha y P(Z_t^N)} \ B(Z_t^N) \Big) \right) d\alpha \. \\
	\end{align*}

	\noindent And thus, by summing, we obtain
	
	\begin{align*}
	S^1_t(A,B)=& \  \tau_M\Big( (\tau_{N}\otimes I_M) \bigotimes (\tau_{N}\otimes I_M) \Big( \partial_i A\ e^{\i y P} B\Big) \Big) \\
	&+ \i y \int_0^1 \tau_M\Big( (\tau_{N}\otimes I_M) \bigotimes (\tau_{N}\otimes I_M) \Big( A e^{\i (1-\alpha) y P}\ \partial_i P\ e^{\i \alpha y P} B \Big) \Big) d\alpha \\
	&+ \tau_M\Big( (\tau_{N}\otimes I_M) \bigotimes (\tau_{N}\otimes I_M) \Big( A\ e^{\i y P}\ \partial_i B\Big) \Big) \.
	\end{align*}
	
	\noindent Therefore, after making the difference \eqref{dif} to compute $S_t(A,B)$, we conclude that the difference we wish to estimate  in \eqref{integ} is a linear combination of terms,  whose coefficients only depend on $P$, of the  form \eqref{qa} and \eqref{qb}.

\end{proof}

We need to study the quantity $\S^{\alpha,\beta}(A,B,C,D)$. Let us first explain why one can expect it to be small. 
Let $(g_i)_{1\leq i\leq N}$ be the canonical basis of $\C^N$ so that $E_{r,s}= g_r g_s^*$.   We observe that 
 $\mathcal{S}_{N,0}^{\alpha,\beta}(A,B,C,D) = 0$, since

\begin{align*}
&\frac{1}{N} \sum_{1\leq s,r\leq N} \tau_N\otimes\tau_M\Big( E_{s,r}\otimes I_M\ S_0\ E_{r,s}\otimes I_M\ V_0 \Big) \\
&= \frac{1}{N^2} \sum_{1\leq s,r\leq N} \tau_M\Big( g_r^*\otimes I_M\ S_0\ g_r\otimes I_M\ g_s^*\otimes I_M\ V_0\ g_s\otimes I_M\ \Big) \\
&= \tau_M\Big( (\tau_N\otimes I_M)(S_0)\ (\tau_N\otimes I_M)(V_0) \Big) \.\\
\end{align*}

\noindent Let us now estimate $\mathcal{S}_{N,\infty}^{\alpha,\beta}(A,B,C,D)$. We first notice that if $X,Y\in \A_N$ are free from $\M_N(\C)$ 
then (with constants being identified with constants times identity):

\begin{itemize}
	\item If $r\neq s$: $\tau_N\Big( E_{r,s} (X-\tau_N(X)) E_{s,r} (Y-\tau(Y)) \Big) = 0 $ .
	\item If $r = s$: $\tau_N\Big( \left(E_{r,r}-\frac{1}{N}\right) (X-\tau_N(X)) \left(E_{r,r}-\frac{1}{N}\right) (Y-\tau_N(Y)) \Big) = 0 $ .
\end{itemize}

\noindent Consequently, since $\tau_N(E_{r,s}E_{s,r})=1/N$ for all $r,s$, we get:

\begin{itemize}
	\item If $r\neq s$: $\tau_N\Big( E_{r,s} X E_{s,r} Y \Big) = \frac{1}{N} \tau_N(X) \tau_N(Y) $ .
	\item If $r = s$: $\tau_N\Big( E_{r,s} X E_{s,r} Y \Big) = \frac{1}{N} \tau_N(X) \tau_N(Y) + \frac{1}{N^2} \Big( \tau_N(XY) -\tau_N(X)\tau_N(Y) \Big)$ .
\end{itemize}
Hence 
$$ \frac{1}{N} \sum_{1\leq s,r\leq N} \tau_N\Big( E_{r,s} X E_{s,r} Y \Big) = \tau_N(X) \tau_N(Y) + \frac{1}{N^2} \Big( \tau_N(XY) -\tau_N(X)\tau_N(Y) \Big).$$
 This implies that $N^2 \mathcal{S}_{N,\infty}^{\alpha,\beta}(A,B,C,D)$ is bounded by a constant independent of $N$ or $y$ since

\begin{align*}
&\frac{1}{N} \sum_{1\leq s,r\leq N} \tau_N\otimes\tau_M\Big( E_{s,r}\otimes I_M\ S_\infty \ E_{r,s}\otimes I_M\ V_\infty \Big) \\
&= \tau_M\Big( (\tau_N\otimes I_M)(S_\infty)\ (\tau_N\otimes I_M)(V) \Big) + \frac{1}{N^2} \Big( \tau_N\otimes\tau_M(S_\infty V_\infty) - \tau_M( \tau_N\otimes I_M (S) \tau_N\otimes I_M (V_\infty) ) \Big) \. \\
\end{align*}

\noindent  With this in mind, we now study what happens at time $t$. More precisely we show:

\begin{lemma}
\label{core}

There is a polynomial $L$ which only depends on $A,B,C,D$ and $P$ such that for any $\alpha,\beta\in [0,1]$, $N\in\N$, $t\in\R^+$ and $y\in\R$,

$$ \Big| \S^{\alpha,\beta}\Big(A,B,C,D\Big) \Big| \leq \frac{(1+y^2) M^2}{N^2}  L(\norm{Z^{NM}}) \.$$

\end{lemma}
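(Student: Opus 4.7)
The plan is to apply the same Gaussian-integration-by-parts / Schwinger-Dyson strategy used in the proof of Lemma \ref{fourier}, but now directly to $\mathcal{S}_{N,t}^{\alpha,\beta}(A,B,C,D)$ itself. Two observations recorded just before the statement motivate this: $\mathcal{S}_{N,0}^{\alpha,\beta}=0$, and $\mathcal{S}_{N,\infty}^{\alpha,\beta}$ is bounded by an explicit $M^2/N^2$ quantity thanks to the freeness of $x$ from $\M_N(\C)$. The goal is to propagate the smallness to all intermediate $t$ via control of the $t$-derivative of $\mathcal{S}_{N,t}^{\alpha,\beta}$ along the free Ornstein-Uhlenbeck flow.

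First, I would differentiate $\mathcal{S}_{N,t}^{\alpha,\beta}$ in $t$. Since $X_t^N = e^{-t/2}X^N + (1-e^{-t})^{1/2}x$, each application of the chain rule inserts, at a position previously occupied by an $X_i^{t,N}$ inside $A,B,C,D$ or (via Duhamel's formula) inside one of the exponentials $e^{\i\beta yP}$, $e^{\i\alpha yP}$, either a factor $-\tfrac{1}{2}e^{-t/2}X_i^N$ or $\tfrac{1}{2}e^{-t}(1-e^{-t})^{-1/2}x_i$. I would then treat the $X^N_i$-insertions by Gaussian integration by parts (exactly as in the derivation of \eqref{premierterm}) and the $x_i$-insertions by the Schwinger-Dyson equation of Proposition \ref{SDE}. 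The key algebraic point is that the two pieces of $\mathcal{S}_{N,t}^{\alpha,\beta}$ --- the twisted trace with $E_{s,r}\otimes I_M$ insertions and the factorized product $\tau_M((\tau_N\otimes I_M)(S_t)(\tau_N\otimes I_M)(V_t))$ --- produce, after IBP/SDE, matching families of leading-order contributions that cancel pairwise. What survives are (i) genuine $1/N^2$ corrections arising from the identity $\tau_N(E_{r,s}XE_{s,r}Y)=N^{-1}\tau_N(X)\tau_N(Y)+N^{-2}\delta_{r,s}(\tau_N(XY)-\tau_N(X)\tau_N(Y))$ for $X,Y$ free from $\M_N(\C)$, and (ii) new $\mathcal{S}$-type expressions with strictly simpler $(A,B,C,D)$, which can be controlled by induction on total degree.

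The factors of $y$ are tracked as follows: Duhamel applied to an exponential $e^{\i\gamma yP}$ is the only mechanism producing a $y$, and it is used at most once in the $t$-differentiation and at most once more during the subsequent IBP/SDE on the other exponential, giving the claimed $1+y^2$ growth. The $M^2$ factor comes, as in Lemma \ref{fourier}, from the exterior trace $\tau_M$ applied to operators whose operator norms are polynomial in $\norm{Z^{NM}}$; the exponentials $e^{\i\gamma yP}$ have norm $1$, and the norms of $X^N$ and $x$ are uniformly controlled by Proposition \ref{bornenorme}. Integrating the resulting derivative estimate from $0$ to $t$ against the decaying $e^{-t/2}$ coming from the chain rule, together with $\mathcal{S}_{N,0}^{\alpha,\beta}=0$, yields the bound.

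The main obstacle will be the combinatorial bookkeeping: each of the many positions inside the six factors $A$, $B$, $e^{\i\beta yP}$, $C$, $D$, $e^{\i\alpha yP}$ contributes several IBP/SDE terms, and one must verify pairwise cancellation of the order-one contributions between the two pieces of $\mathcal{S}_{N,t}^{\alpha,\beta}$, so that only $1/N^2$ corrections and strictly simpler $\mathcal{S}$-quantities remain. This delicate cancellation, rather than any single analytic estimate, is the heart of the argument.
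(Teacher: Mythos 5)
Your proposal takes a genuinely different route from the paper, and unfortunately it has a gap at its very heart. The paper does \emph{not} differentiate $\mathcal S^{\alpha,\beta}_{N,t}$ in $t$ and show cancellation; instead it proves Proposition \ref{limite_trace}, which represents $Z_t^N$ as the $k\to\infty$ limit (in distribution) of a purely matricial model $U_t^k=\big((e^{-t/2}X^N\otimes I_k+(1-e^{-t})^{1/2}R^{kN})\otimes I_M,\,Z^{NM}\otimes I_k,\ldots\big)$ with $R^{kN}$ a GUE of size $kN$. Once everything is a finite matrix, the Poincar\'e inequality applies: Lemma \ref{grcalc} bounds $\Lambda_{k,N,t}$ by a sum of \emph{covariances}, and these covariances are controlled by a gradient computation, which is where the $M^2/N^2$ and the $(1+y^2)$ arise (the $M^2$ from the auxiliary GUE matrix $K_M$ introduced in \eqref{ccomb2}, and the $y^2$ from Duhamel applied at most twice). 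A separate Gronwall-type argument in Lemma \ref{nondiag} kills the remaining ``off-diagonal'' term $k^3\tau_M(\E_R[\cdots P_{1,2}]\E_R[\cdots P_{1,2}])$.

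The concrete gap in your proposal is the claimed ``pairwise cancellation of leading-order contributions'' and the use of the identity $\tau_N(E_{r,s}XE_{s,r}Y)=N^{-1}\tau_N(X)\tau_N(Y)+N^{-2}\delta_{r,s}(\ldots)$. That identity is valid only when $X,Y$ are \emph{free} from $\M_N(\C)$, which is the case at $t=\infty$ (since $S_\infty,V_\infty$ are built from $x$ and $Z^{NM}$ and one can trace out the semicircular part) but is emphatically not the case for $0<t<\infty$: $S_t$ and $V_t$ depend on $X^N$, which is a concrete element of $\M_N(\C)$, so there is no freeness to invoke. Consequently, after the $t$-derivative and IBP/SDE, you are left with quantities for which there is no a priori $1/N^2$ smallness, and your ``induction on total degree'' does not help because Duhamel does not strictly decrease degree (it reinserts an exponential). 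What is needed, and what your proposal is missing, is precisely the mechanism that upgrades the free computation at $t=\infty$ to a quantitative estimate valid for all $t$; in the paper this is accomplished by trading the semicircular variables for $kN\times kN$ GUE matrices and running a concentration (Poincar\'e) argument, not by an exact-cancellation argument.
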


This lemma is a direct consequence of Lemmas \ref{grcalc} and \ref{nondiag}, but firstly we need the following proposition. 
It justifies that the family $(X^N\otimes I_M, x\otimes I_M, Z^{NM})$ is actually the asymptotic distribution (in the sense of Definition 
\ref{freeprob}) as $k$ goes to infinity of the family $(X^N\otimes I_{kM}, R^{kN}\otimes I_M, Z^{NM}\otimes I_k)$ where $R^{kN}$ are 
independent GUE random matrices of size $kN$. The advantage of this representation is that it allows us to use classical analysis, and to 
treat the GUE variables and the semi-circle variables in a more symmetric way.  A direct proof using semi-circular variables should however 
be possible. 

\begin{prop}
	\label{limite_trace}
	
	If $R^{kN}$ are independent GUE random matrices of size $kN$, independent of $X^N$, we set
	
	\begin{equation*}
	U_t^k = \Big(\left(e^{-t/2}X^N\otimes I_k + (1-e^{-t})^{1/2} R^{kN}\right) \otimes I_M\ , Z^{NM}\otimes I_k, {Z^{NM}}^*\otimes I_k\Big) \.
	\end{equation*}
	
	\noindent Then if $q = A e^{\i \beta y P} B$, we have that $\P_{X^N}$-almost surely for any $t$,
	
	$$ (\tau_N\otimes I_M)\big(q(Z_t^N)\big) = \lim_{k\to\infty} \E_R\left[ (\tau_{kN}\otimes I_M)\big(q(U_t^k)\big) \right] \,$$
	
	\noindent where $\E_R$ is the expectation with respect to $R^{kN}$. Observe here that $M,N$ are kept fixed.
	
\end{prop}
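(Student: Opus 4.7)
The plan is to first establish the identity when $q$ is replaced by a polynomial via Voiculescu's asymptotic freeness theorem, then extend to $q = Ae^{\i\beta y P}B$ by Taylor expansion of the exponential with uniform-in-$k$ control of the remainder. As a preliminary reduction, since $\tau_N\otimes I_M$ and $\tau_{kN}\otimes I_M$ take values in $\M_M(\C)$, it suffices to test both sides against an arbitrary $m\in \M_M(\C)$ using the identity $\tau_M(m\cdot(\tau_{kN}\otimes I_M)(W)) = (\tau_{kN}\otimes\tau_M)((I_{kN}\otimes m)\cdot W)$, which turns the proposition into a statement about scalar-valued traces.

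For the polynomial case, I would condition on $X^N$ and observe that the joint $\tau_{kN}\otimes\tau_M$ $\star$-distribution of the deterministic family $\{X^N\otimes I_{kM},\ Z^{NM}\otimes I_k,\ {Z^{NM}}^*\otimes I_k,\ I_{kN}\otimes m\}$ is independent of $k$, and coincides with the $\tau_N\otimes\tau_M$-distribution of $\{X^N\otimes I_M,\ Z^{NM},\ {Z^{NM}}^*,\ I_N\otimes m\}$, since tensoring with $I_k$ merely replicates eigenvalues. The $d$-tuple $R^{kN}$ of independent GUE matrices of size $kN\to\infty$ is independent of these deterministic matrices, all of whose norms are bounded uniformly in $k$, so Voiculescu's asymptotic freeness theorem yields that, under $\E_R\otimes\tau_{kN}\otimes\tau_M$, the joint $\star$-moments of $R^{kN}$ together with the deterministic family converge to those of a free semicircular system $x$ free from the deterministic algebra. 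The affine combination $(e^{-t/2}X^N\otimes I_k + (1-e^{-t})^{1/2}R^{kN})\otimes I_M$ then has limiting moments equal to those of $X_t^N\otimes I_M = (e^{-t/2}X^N + (1-e^{-t})^{1/2}x)\otimes I_M$, so the identity holds whenever $q$ is replaced by a polynomial.

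To pass from polynomials to $q=Ae^{\i\beta yP}B$, I would approximate $q$ by $q_n = A(\sum_{j=0}^n(\i\beta yP)^j/j!)B$, which is polynomial for every $n$; the previous step gives the desired convergence for each $q_n$. For the remainder, bound in operator norm
$$\norm{q(U_t^k)-q_n(U_t^k)} \le \norm{A(U_t^k)}\,\norm{B(U_t^k)} \sum_{j>n}\frac{|\beta y|^j}{j!}\norm{P(U_t^k)}^j,$$
and use \eqref{normA} so that each factor is dominated by a polynomial in the constituent norms, themselves bounded by $\norm{X^N}+\norm{R^{kN}}+\norm{Z^{NM}}$. Proposition \ref{bornenorme} supplies the crucial uniform moment estimate $\E_R[\norm{R^{kN}}^p]\le C^p$ valid in the range $p=O(kN)$, together with a Gaussian tail handling larger $p$, so the expected remainder is dominated by the tail of an absolutely convergent series whose sum is independent of $k$; an analogous bound controls $\norm{q(Z_t^N)-q_n(Z_t^N)}$. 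The main obstacle is precisely this uniform-in-$k$ control of the Taylor tail, since the moment bound of Proposition \ref{bornenorme} is only valid up to order $\alpha kN/2$ and must be combined with the Gaussian tail to cover higher orders; once it is secured, a standard $\varepsilon/3$ argument (first choosing $n$ large uniformly in $k$, then $k$ large for the polynomial $q_n$) concludes.
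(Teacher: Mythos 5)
Your proposal shares the paper's first step (asymptotic freeness to handle the polynomial case) but takes a genuinely different route for the passage from polynomials to $q = Ae^{\i\beta yP}B$: you Taylor-expand the exponential and try to control moments of the tail, whereas the paper approximates $x\mapsto e^{\i\beta yx}$ uniformly by a polynomial on a fixed compact interval $[-1-C,C+1]$ and disposes of the complementary event $\{\norm{P(U_t^k)}\geq C+1\}$ by a large-deviation bound from Proposition~\ref{bornenorme} combined with Cauchy--Schwarz. Your route can be made to work, but as written it has a real gap in the remainder control.

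The problem is the bound
\begin{equation*}
\norm{q(U_t^k)-q_n(U_t^k)} \le \norm{A(U_t^k)}\,\norm{B(U_t^k)} \sum_{j>n}\frac{|\beta y|^j}{j!}\norm{P(U_t^k)}^j .
\end{equation*}
While this is a correct pointwise inequality, its $\E_R$-expectation is \emph{infinite} as soon as $\deg P\ge 2$, since the right-hand side is at least as large as a constant times $e^{|\beta y|\,\norm{P(U_t^k)}}$ and $\norm{P(U_t^k)}$ grows like $\norm{R^{kN}}^{\deg P}$, while the tail of $\norm{R^{kN}}$ provided by Proposition~\ref{bornenorme} is only $e^{-\alpha kNu}$ \emph{linearly} in $u$. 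Hence $\E_R\bigl[e^{c\norm{R^{kN}}^{\deg P}}\bigr]=\infty$ for $\deg P\ge 2$ and any $c>0$, and your claim that ``the expected remainder is dominated by the tail of an absolutely convergent series whose sum is independent of $k$'' is false: the term-by-term expectations $\frac{|\beta y|^j}{j!}\E_R[\norm{P(U_t^k)}^j]$ eventually blow up superexponentially in $j$ once $j\deg P\gg kN$, so there is no $k$-independent convergent majorant. The ``Gaussian tail handling larger $p$'' does not rescue this, for the same reason.

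The gap is repairable, but only if you replace the crude series tail by the sharp Taylor remainder valid for self-adjoint $P(U_t^k)$ via the spectral theorem: since $\lambda\mapsto e^{\i\beta y\lambda}$ has all derivatives bounded by $|\beta y|^n$ on $\R$,
\begin{equation*}
\norm{e^{\i\beta yP(U_t^k)}-\textstyle\sum_{j\le n}\frac{(\i\beta yP(U_t^k))^j}{j!}}\ \le\ \frac{(|\beta y|\,\norm{P(U_t^k)})^{n+1}}{(n+1)!}\, .
\end{equation*}
This is a polynomial of \emph{fixed} degree $(n+1)\deg P + \deg A + \deg B$ in $\norm{R^{kN}}$, so for each fixed $n$, Proposition~\ref{bornenorme} applies once $k$ is large enough (not uniformly in $k$, but that is all the $\varepsilon/3$ argument needs: pick $n$ to make $\limsup_k$ of the remainder small, then $k$ large for the polynomial $q_n$). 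With this substitution the argument goes through. The paper's uniform-approximation-plus-LDP argument is somewhat cleaner because it never needs moment estimates of unbounded degree; it only needs the single exponential bound $\P(\norm{P(U_t^k)}\ge C+1)\le e^{-LkN}$ together with the unit norm of $e^{\i\beta yP}$, so no delicate matching of factorials against moment growth is required.
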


\begin{proof}
	
	This proposition is mostly a corollary of Theorem 5.4.5 of \cite{alice}, indeed this theorem states that if $R^{kN}$ are GUE matrices and 
	$D^{kN}$ are deterministic matrices such that
	
	$$ \sup_{l\in\N}\ \max_{i}\ \sup_{k\in\N} \left( \frac{1}{N} \tr(|D_i^{kN}|^l) \right)^{1/l} < \infty \. $$
	
	\noindent Then, if $D^{kN}$ converges in distribution towards a family of non-commutative random variables $d$, the family 
	$(R^{kN},D^{kN})$ in the non-commutative probability space $(\M_{kN}(\C),*,\E[\frac{1}{kN}\tr])$ converges in distribution towards the 
	family $(x,d)$ where $x$ is a system of free semicircular variables free from $d$. In our situation we can write for every $i$,
	
	$$ Z_i^{NM} = \sum_{1\leq r,s\leq N} E_{r,s}\otimes A_{r,s,i}^M \.$$
	
	\noindent Thus, if $E^N = (E_{r,s})_{1\leq r,s\leq N}$, we fix $D^{k,N} = (X^N\otimes I_k, E^N\otimes I_k)$, and we can apply Theorem 5.4.5 from \cite{alice} to get that for any non-commutative polynomial $P$,
	
	$$ \lim_{k\to\infty} \E_R\left[\tau_{kN}(P(R^{kN},D^{k,N})) \right] = \tau_N\left(P(x,D^{k,1})\right) .$$
	
	\noindent Consequently, for any non-commutative polynomial $P$, we also have
	
	$$ \lim_{k\to\infty} \E_R\left[\tau_{kN}\otimes I_M (P(R^{kN},D^{k,N},A^M,{A^M}^*)) \right] = \tau_N\otimes I_M  \left(P(x,X^N,E^N,A^M,{A^M}^*)\right) .$$
	
	\noindent Hence, for any $P\in \A_{d,q}$,
	
	\begin{equation}\label{asf} \lim_{k\to\infty} \E_R\left[\tau_{kN}\otimes I_M (P(U_t^k)) \right] = \tau_N\otimes I_M \left(P(Z_t^N)\right) .\end{equation}
	
	Thanks to Property \ref{bornenorme}, we know that there exist $\alpha>0$ and $D<\infty$ such that for all $u\geq D$, for $N$ large 
	enough,
	
	\begin{equation}
		\label{LDP}
		\P\left(\norm{R_1^{kN}}\geq u\right) \leq e^{-\alpha\ u\ kN} \.
	\end{equation}
	
	\noindent Since if $c_M(P)$ is the coefficient of $P$ associated with the monomial $M$, one has
	
	$$ \norm{P(U_t^k)} \leq \sum_{M \mbox{ monomials}} |c_M(P)| \norm{M(U_t^k)} \,$$
	
	\noindent there exist constants $L$ and $C$ which do depend on $\norm{Z^{NM}_j}$ and $\norm{X_i^N}$ such that for $N$ large 
	enough
	
	\begin{equation}
		\label{LDP2}
		\P\left(\norm{P(U_t^k)}\geq C\right) \leq e^{-L kN} \.
	\end{equation}
	
	\noindent Knowing this, let $f_{\varepsilon}\in \C[X]$ be a polynomial which is $\varepsilon$-close from $x\mapsto e^{\i \beta y x}$ on the 
	interval $[-1-C,C+1]$. Since one can always assume that $C> \norm{P(Z_t^N)}$, we have, with $q = A e^{\i \beta y P} B$ :
	
	$$ \| (\tau_N\otimes I_M)\big(q(Z_t^N)\big) - (\tau_{N}\otimes I_M)\big((Af_{\varepsilon}(P)B)(Z_t^N)\big) \| \leq  D\varepsilon \,$$
	where $D$ is some constant which can depend on the dimensions $N,M$ but not on $k$.
	
	\noindent Thus
	
	\begin{align*}
		&\| (\tau_N\otimes I_M)\big(q(Z_t^N)\big) - \E_R\left[ (\tau_{kN}\otimes I_M)\big(q(U_t^k)\big) \right] \|\le D \varepsilon + D \E_R\left[ \norm{(q - Af_{\varepsilon}(P)B)(U_t^k)} \1_{\norm{P(U_t^k)}\geq C+1} \right] \\
		&\qquad\qquad\qquad\qquad  +\| (\tau_N\otimes I_M)\big((Af_{\varepsilon}(P)B)(Z_t^N)\big) - \E_R\left[ (\tau_{kN}\otimes I_M)\big((Af_{\varepsilon}(P)B)(U_t^k)\big) \right] \|  \\
	\end{align*}
	
	\noindent The last term goes to zero as $k$ goes to infinity by \eqref{asf}. Besides
	
	\begin{align*}
		&\E_R\left[ \norm{(q - Af_{\varepsilon}(P)B)(U_t^k)} \1_{\norm{P(U_t^k)}\geq C+1} \right] \\
		&\qquad\qquad \leq \E_R\left[ \left(\norm{A(U_t^k)} \norm{B(U_t^k)} + \norm{(Af_{\varepsilon}(P)B)(U_t^k)}\right)^2 \right]^{1/2} \P( \norm{P(U_t^k)}\geq C+1)^{1/2} \.
	\end{align*}
	
	\noindent The first term is bounded independently of $k$ thanks to \eqref{LDP} and the second converges exponentially fast towards 
	$0$ thanks to \eqref{LDP2}. Consequently
	
	$$\limsup\limits_{k\to \infty} \| (\tau_N\otimes I_M)\big(q(Z_t^N)\big) - \E_R\left[ (\tau_{kN}\otimes I_M)\big(q(U_t^k)\big) \right] \| \leq D\varepsilon \.$$
	
	\noindent Hence the conclusion since the left hand side does not depend on $\varepsilon$.
	
\end{proof}

Recall that by definition

\begin{equation}\label{defL}
\mathcal{S}_{N,t}^{\alpha,\beta}\Big(A,B,C,D\Big):=\mathbb E[\Lambda_{N,t}^{\alpha,\beta}\Big(A,B,C,D\Big)]\end{equation}
with, following the notations of Definition \ref{defS} :
\begin{align*}
	\Lambda_{N,t}^{\alpha,\beta}\Big(A,B,C,D\Big) =& \ \frac{1}{N} \sum_{1\leq s,r\leq N} \tau_N\otimes\tau_M\Big( E_{s,r}\otimes I_M\times S_t\times E_{r,s}\otimes I_M\times V_t \Big)  \\
	&- \tau_M\Big( (\tau_N\otimes I_M)(S_t)\ (\tau_N\otimes I_M)(V_t) \Big)  .
	\end{align*}
By Proposition \ref{limite_trace}, we deduce that
\begin{equation}\label{defLa}
\Lambda_{N,t}^{\alpha,\beta}\Big(A,B,C,D\Big) =\lim_{k\rightarrow\infty} \Lambda_{k,N,t}^{\alpha,\beta}\Big(A,B,C,D\Big)\end{equation}
where $\Lambda_{k,N,t}^{\alpha,\beta}\Big(A,B,C,D\Big)$ equals
\begin{eqnarray}\label{major}
&&\E_R\Big[ \frac{1}{N} \sum_{1\leq s,r\leq N} \tau_{kN}\otimes\tau_M\Big( E_{s,r}\otimes I_k\otimes I_M\ (A e^{\i \beta y P} B)(U_t^k)\ E_{r,s}\otimes I_k\otimes I_M\ (C e^{\i \alpha y P} D)(U_t^k)  \Big) \Big] \nonumber\\
&&\qquad - \tau_M\big( \E_R\left[\tau_{kN}\otimes I_M (A e^{\i \beta y P} B)(U_t^k)\right] \E_R\left[\tau_{kN}\otimes I_M(C e^{\i \alpha y P} D)(U_t^k) ) \right] \big)\end{eqnarray}

We can now prove the following intermediary lemma in view of deriving Lemma \ref{core}.

\begin{lemma}
	\label{grcalc}
	We define $U_t^k$ as in Proposition \ref{limite_trace}, we set
	\begin{itemize}
		\item $P_{1,2} = I_N\otimes E_{1,2}\otimes I_M$,
		\item $Q = (A e^{\i \beta y P} B)(U_t^k)$,
		\item $ T = (C e^{\i \alpha y P} D)(U_t^k) $.
	\end{itemize}
	
	\noindent Then there is a constant $C$ and a polynomial $L$ which only depend on $A,B,C,D$ and $P$ such that for any $\alpha,\beta\in [0,1]$, $M,N\in\N$, $t\in\R^+$ and $y\in\R$,
	
	\begin{align}
	\label{intermedi}
	|\Lambda_{k,N,t}^{\alpha,\beta}\Big(A,B,C,D\Big)|
	 \leq &\ \frac{(1+y^2) M^2}{N^2} L\left(\norm{Z^{NM}},\norm{X^N}\right) \\
		&+ k^3 \left| \tau_M\left( \E_R\left[(\tau_{kN}\otimes I_M)(QP_{1,2})\right] \E_R\left[(\tau_{kN}\otimes I_M)(TP_{1,2})\right] \right) \right|. \nonumber
	\end{align}
	
\end{lemma}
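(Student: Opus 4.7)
The plan is to reduce $\Lambda_{k,N,t}^{\alpha,\beta}(A,B,C,D)$ to a quantity involving partial traces over $\M_N(\C)$, then to exploit the $U(k)$-invariance of the law of the GUE family $R^{kN}$ combined with Gaussian integration by parts. The starting observation is the matrix-unit identity
\[
\sum_{s,r=1}^{N}(E_{s,r}\otimes I_k\otimes I_M)X(E_{r,s}\otimes I_k\otimes I_M)=N\cdot I_N\otimes(\tau_N\otimes I_k\otimes I_M)(X),
\]
valid for every $X\in\M_N(\C)\otimes\M_k(\C)\otimes\M_M(\C)$ (one checks it on matrix units). Applied inside the trace with $X=Q$, this gives
\[
\tfrac{1}{N}\sum_{s,r}\tau_{kN}\otimes\tau_M\bigl((E_{s,r}\otimes I_k\otimes I_M)Q(E_{r,s}\otimes I_k\otimes I_M)T\bigr)=\tau_k\otimes\tau_M(\hat Q\hat T),
\]
where $\hat Q:=(\tau_N\otimes I_k\otimes I_M)(Q)$, $\hat T:=(\tau_N\otimes I_k\otimes I_M)(T)$, and $\tilde Q:=(\tau_k\otimes I_M)(\hat Q)=(\tau_{kN}\otimes I_M)(Q)$. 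Decomposing $\hat Q=I_k\otimes\tilde Q+\hat Q^0$ with $(\tau_k\otimes I_M)(\hat Q^0)=0$, and similarly for $T$, one obtains $(\tau_k\otimes I_M)(\hat Q\hat T)=\tilde Q\tilde T+(\tau_k\otimes I_M)(\hat Q^0\hat T^0)$, which splits $\Lambda_{k,N,t}^{\alpha,\beta}$ into a \emph{scalar-covariance} term $\tau_M(\E_R[\tilde Q\tilde T]-\E_R[\tilde Q]\E_R[\tilde T])$ and a \emph{traceless} term $\tau_M(\E_R[(\tau_k\otimes I_M)(\hat Q^0\hat T^0)])$.

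For the scalar-covariance piece I would apply Gaussian integration by parts on the entries of $R^{kN}$ combined with Duhamel's formula to differentiate $e^{i\beta yP(U_t^k)}$ and $e^{i\alpha yP(U_t^k)}$, in the spirit of Lemma \ref{fourier} and Proposition \ref{SD}. The subtraction of $\E_R[\tilde Q]\E_R[\tilde T]$ cancels the leading factorised term, leaving a correction of the $N^{-2}$ order characteristic of GUE trace fluctuations; each Duhamel derivative contributes at most one factor of $y$, producing the $(1+y^2)$ factor. The $M^2$ factor appears when the resulting matrix-valued quantities are bounded via the non-commutative Cauchy--Schwarz inequality $|\tau_M(XY)|\le M^{-1}\|X\|_{HS}\|Y\|_{HS}$ combined with the sum over the $M^2$ matrix units entering the Hilbert--Schmidt norm; the remaining polynomial factors in $R^{kN}$, $X^N$ and $Z^{NM}$ are absorbed into $L(\|X^N\|,\|Z^{NM}\|)$.

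For the traceless piece I would expand $(\tau_k\otimes I_M)(\hat Q^0\hat T^0)=k^{-1}\sum_{a,c}(\hat Q^0)_{ac}(\hat T^0)_{ca}$ and invoke the $U(k)$-invariance of the law of $R^{kN}$ under conjugation by $I_N\otimes U$ (which fixes $X^N\otimes I_k\otimes I_M$ and $Z^{NM}\otimes I_k$): this makes all diagonal ($a=c$) summands equal in law and all off-diagonal ($a\neq c$) summands equal in law, so the sum reduces to $\E_R[(\hat Q^0)_{11}(\hat T^0)_{11}]+(k-1)\E_R[(\hat Q)_{12}(\hat T)_{21}]$ (using $(\hat Q^0)_{12}=(\hat Q)_{12}$ since $I_k\otimes\tilde Q$ is diagonal in the $\M_k(\C)$-factor). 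A direct matrix-unit computation identifies $(\hat Q)_{12}=k(\tau_{kN}\otimes I_M)(QP_{2,1})$ and $(\hat T)_{21}=k(\tau_{kN}\otimes I_M)(TP_{1,2})$. Writing $\E_R[\cdot\cdot]=\mathrm{Cov}_R(\cdot,\cdot)+\E_R[\cdot]\E_R[\cdot]$ on this off-diagonal piece, the product of expectations, together with the explicit factor $(k-1)k^2$, reproduces precisely the $k^3$ term of \eqref{intermedi}; the residual covariance and the diagonal $(a=c)$ term are controlled by the same IBP scheme as in the scalar-covariance piece and absorbed into $L$. The main obstacle will be executing the non-commutative calculus cleanly: each Duhamel differentiation of the exponentials produces nested integrals of products of exponentials and polynomials, and one must ensure that the estimates remain uniform in $k$ (so that the $k\to\infty$ limit \eqref{defLa} is valid) and polynomially controlled by $\|X^N\|$ and $\|Z^{NM}\|$.
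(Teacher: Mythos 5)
Your decomposition is a genuinely different, and valid, route to the same structure. Starting from the matrix-unit identity $\sum_{s,r}(E_{s,r}\otimes I_k\otimes I_M)X(E_{r,s}\otimes I_k\otimes I_M)=N\,I_N\otimes(\tau_N\otimes I_k\otimes I_M)(X)$ and splitting $\hat Q=I_k\otimes\tilde Q+\hat Q^0$ is a clean way to isolate the off-diagonal product $\E_R[(\hat Q)_{12}]\E_R[(\hat T)_{21}]$ and recognize, via the identification $(\hat Q)_{12}=k(\tau_{kN}\otimes I_M)(QP_{2,1})$, that it accounts exactly for the $k^3$ term (the swap $P_{2,1}\leftrightarrow P_{1,2}$ is harmless by the permutation invariance you invoke). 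The paper reaches the same place by keeping the double sum over $l,l'$, separating covariance from the product of expectations in \eqref{CS}, and then applying permutation invariance only to the latter; the two bookkeepings are equivalent and yours makes the $(k-1)$ multiplicity explicit, which is a minor presentational gain.

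The real gap is in your treatment of what remains after removing the $k^3$ term, i.e. the scalar-covariance piece, the diagonal traceless piece $\E_R[(\hat Q^0)_{11}(\hat T^0)_{11}]$, and the off-diagonal covariance carrying an explicit factor $(k-1)$. You wave at ``IBP + Duhamel'' and assert that the $M^2$ factor comes from the Hilbert--Schmidt bound $|\tau_M(XY)|\le M^{-1}\|X\|_{HS}\|Y\|_{HS}$ together with a sum over $M^2$ matrix units. This is not a valid derivation of the $M^2$: naively that inequality, combined with $\|X\|_{HS}\le\sqrt{M}\|X\|$, leaves no net $M$-dependence, and if you instead bound each entry and sum you land on $M$, not $M^2$. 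The paper's actual mechanism is quite specific. It uses Cauchy--Schwarz to reduce the covariance to a variance, Poincar\'e (Proposition \ref{Poinc}) to bound the variance by $\E\|\nabla F\|_2^2$, Duhamel to compute the gradient, and then the crucial algebraic reshuffling \eqref{ccomb1} that converts the sum over the $kN\times kN$ matrix units $E_{r,s}$ appearing in the gradient into a sum over the $M\times M$ matrix units $e_ue_v^*$. It is precisely this reshuffling, followed by the Gaussian integration-by-parts trick \eqref{ccomb2} that replaces $\tfrac{1}{M}\sum_{u,v}e_ue_v^*\otimes\cdots$ by an expectation over an auxiliary $M\times M$ GUE matrix $K_M$ with $\E_K[\|K_M\|^2]\le 3$, that produces the clean $\left(\tfrac{yM}{N}\right)^2$ in \eqref{majvar}, hence the $M^2/N^2$ in \eqref{fs1}. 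Without this (or an equivalent argument), your sketch does not establish the $M^2$ exponent nor the uniformity in $k$ needed to survive the $k\to\infty$ limit \eqref{defLa}. In particular you also need to see explicitly that the off-diagonal covariance is $O(1/k)$ so that the prefactor $(k-1)$ is harmless; this falls out of the Poincar\'e computation in the paper but is not visible from your proposal as written.
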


\begin{proof}
We denote in short $\Lambda_{k,N,t}^{\alpha,\beta}\Big(A,B,C,D\Big)=\Lambda_{k,N,M}=\E_R[\Lambda_{k,N,M}^1]-\Lambda_{k,N,M}^2$
with
\begin{eqnarray}
\Lambda_{k,N,M}^1&=&\frac{1}{N} \sum_{1\leq s,r\leq N} \tau_{kN}\otimes I_M\left( E_{s,r}\otimes I_k\otimes I_M\ Q\ E_{r,s}\otimes I_k\otimes I_M\ T\right)\nonumber\\
\Lambda_{k,N,M}^2&=& \tau_M\big( \E_R\left[\tau_{kN}\otimes I_M Q\right] \E_R\left[\tau_{kN}\otimes I_M(T) ) \right] \big)\label{major1}\end{eqnarray}
 Let $(g_i)_{i\in [1,N]}$ and $(f_i)_{i\in [1,k]}$ be the canonical basis of $\C^N$ and $\C^k$, $E_{i,j}$ is the matrix whose only non-zero coefficient is $(i,j)$ and this coefficient has value $1$, the size of the matrix $E_{i,j}$ will depend on the context. We use the fact that $E_{r,s} = g_r g_s^*$ and $I_k = \sum_l E_{l,l}$ with $E_{l,l} = f_l^*f_l$ to deduce that

\begin{align}
\label{alg}
\Lambda_{k,N,M}^1&= \frac{1}{N} \sum_{1\leq s,r\leq N} \sum_{1\leq l,l'\leq k} \tau_{kN}\otimes I_M\left( E_{s,r}\otimes E_{l,l}\otimes I_M\ Q\ E_{r,s}\otimes E_{l',l'}\otimes I_M\ T\right)  \nonumber \\
&= \frac{1}{N^2k} \sum_{1\leq l,l'\leq k} \sum_{1\leq r\leq N} g_r^*\otimes f_l^*\otimes I_M\ Q\ g_r\otimes f_{l'}\otimes I_M \sum_{1\leq s\leq N} g_s^*\otimes f_{l'}^*\otimes I_M\ T\ g_s\otimes f_l\otimes I_M \nonumber\\
&= \frac{1}{k} \sum_{1\leq l,l'\leq k} \left(\tau_N\otimes I_M\right)( I_N\otimes f_l^*\otimes I_M\ Q\ I_N\otimes f_{l'}\otimes I_M)\ \left(\tau_N\otimes I_M\right)(I_N\otimes f_{l'}^*\otimes I_M\ T\ I_N\otimes f_l\otimes I_M) \nonumber\\
&= k \sum_{1\leq l,l'\leq k} (\tau_{kN}\otimes I_M)\big( Q\ I_N\otimes E_{l',l}\otimes I_M\big)\ (\tau_{kN}\otimes I_M)\big( T\ I_N\otimes E_{l,l'}\otimes I_M\big).
\end{align}

\noindent The last line of the above equation prompts us to set $P_{l',l} = I_N\otimes E_{l',l}\otimes I_M$. If $(e_i)_{i\in [1,M]}$ is the canonical basis of $\C^M$, we set

$$ F^q_{l,l',u,v}(R^{kN}) = e_u^*\ (\tau_{kN}\otimes I_M)\Big( q\big((e^{-t/2}X^N\otimes I_k + (\frac{1-e^{-t}}{Nk})^{1/2} R^{kN} )\otimes I_M, Z^{NM},{Z^{NM}}^*\big)\ P_{l',l}\Big)\ e_v$$

\noindent with $q = Q=A e^{\i \beta y P} B$ or $q=T=C e^{\i \alpha y P} D$.  We thus have with \eqref{alg}

\begin{eqnarray}
	\label{CS}
	\E_R\left[\Lambda_{k,N,M}^1\right]&=&k \sum_{1\leq l,l'\leq k} \tau_M\left( \E_R\left[ (\tau_{kN}\otimes I_M)\big( Q\ P_{l',l}\big)\ (\tau_{kN}\otimes I_M)\big( T\ P_{l,l'}\big) \right] \right) \\
	&=&\ \frac{k}{M} \sum\limits_{\substack{ 1\leq l,l' \leq k \\ 1\leq u,v\leq M }} \cov_R\left(F^Q_{l,l',u,v}(R^{kN}), F^T_{l',l,u,v}(R^{kN})\right)
	 \nonumber \\
	&&+ k \sum_{1\leq l,l'\leq k} \tau_M\left( \E_R\left[ (\tau_{kN}\otimes I_M)\big( Q\ P_{l',l}\big) \right]\ \E_R\left[(\tau_{kN}\otimes I_M)\big( T\ P_{l,l'}\big) \right] \right) \nonumber \.
\end{eqnarray}

\noindent However, the law of $U_t^k$ is invariant under conjugation by $I_N\otimes U\otimes I_M$, where $U\in M_k(\C)$ is a permutation matrix. Therefore, if $l=l'$, $\E_R[\tau_{kN}( Q\ P_{l',l})] = \E_R[\tau_{kN}( Q\ P_{1,1})]$, and if $l\neq l'$, $\E_R[\tau_{kN}( Q\ P_{l',l})] = \E_R[\tau_{kN}( Q\ P_{1,2})]$. We get the same equation when replacing $Q$ by $T$. Consequently, we get

\begin{eqnarray*}
&&k \sum_{1\leq l,l' \leq k} \E_R\Big[(\tau_{kN}\otimes I_M)\big( Q\ P_{l',l}\big)\Big] \E_R\Big[(\tau_{kN}\otimes I_M)\big( T\ P_{l,l'}\big)\Big] \nonumber\\
&&\qquad=\ k^2\ \E_R[(\tau_{kN}\otimes I_M)(QP_{1,1})]\ \E_R[(\tau_{kN}\otimes I_M)(TP_{1,1})] \\
&&\qquad+ (k-1)k^2\ \E_R[(\tau_{kN}\otimes I_M)(QP_{1,2})]\ \E_R[(\tau_{kN}\otimes I_M)(TP_{1,2})] \. \nonumber
\end{eqnarray*}
where the first term in the right hand side equals $\Lambda_{k,N,M}^2=\E_R[(\tau_{kN}\otimes I_M)(Q)]\ \E_R[(\tau_{kN}\otimes I_M)(T)]$ 
because $I_M=\sum_l P_{l,l}$.
Thus equation \eqref{CS} yields 
\begin{eqnarray}
	\label{CS2}
	|\Lambda_{k,N,M}|&\le &\frac{k}{M} \sum\limits_{\substack{ 1\leq l,l' \leq k \\ 1\leq u,v\leq M }} \left|  \cov_R\left(F^Q_{l,l',u,v}(R^{kN}), F^T_{l',l,u,v}(R^{kN})\right)
 \right| \\
	&&+ \left| k^3 \tau_M\Big( \E_R[(\tau_{kN}\otimes I_M)(QP_{1,2})]\ \E_R[(\tau_{kN}\otimes I_M)(TP_{1,2})] \Big) \right| \.\nonumber
\end{eqnarray}
Hence, we only need to bound the first term to complete the proof of the lemma.
Thanks to  Cauchy-Schwartz's inequality, it is enough to bound the covariance of $F^q_{l,l',u,v}(R^{kN})$, for $q=Q$ and $T$.
 To study these variances, we shall use the Poincar\'e inequality, see Proposition \ref{Poinc}. If we set $x_{r,s}^i$ and $y_{r,s}^i$ the real and 
 imaginary part of $ \sqrt{2kN} (R_i^{kN})_{r,s}$ for $r<s$ and $x_{r,r}^i = \sqrt{kN} (R_i^{kN})_{r,r}$, then these are real centered Gaussian 
 random variables of variance $1$ and one can view $F^q_{l,l',u,v}$ as a function on $ (x_{r,s}^i)_{r\leq s,i}$ and $ (y_{r,s}^i)_{r<s,i}$. By a 
 computation 
similar to \eqref{deri}, we find

\begin{equation}
\label{gradiant}
\frac{kN}{1-e^{-t}}  \norm{\nabla F^q_{l,l',u,v}}_2^2 
= \sum_i \sum_{1\leq r,s\leq kN} e_u^*\ (\tau_{kN}\otimes I_M)\Big( \partial_i q \# E_{r,s}\otimes I_M\ P_{l',l} \Big) e_ve_v^*\ (\tau_{kN}\otimes I_M)\Big( \partial_i q \# E_{r,s}\otimes I_M\ P_{l',l} \Big)^* e_u \nonumber \\\nonumber .
\end{equation}
 It is worth noting that here the matrices $E_{r,s}$ have size $kN$ in this formula. Thanks to Poincar\'e inequality (see Proposition \ref{Poinc}), 
 we deduce 

\begin{eqnarray}
&&\frac{k}{M}\sum_{1\leq u,v\leq M} \var_R(F^q_{l,l',u,v}(R_{kN})) \leq \frac{k}{M}\sum_{1\leq u,v\leq M} \E\left[ \norm{\nabla F^q_{l,l',u,v}}_2^2 \right] \nonumber\\
&&\leq\frac{1}{N} \sum_i \sum_{1\leq r,s\leq kN} \E_R\Bigg[ \frac{1}{M}\sum_{1\leq u,v\leq M} e_u^*\ (\tau_{kN}\otimes I_M)\Big( \partial_i q \# E_{r,s}\otimes I_M\ P_{l',l} \Big) e_ve_v^*\label{comb1} \\
&&\quad \quad \quad \quad \quad \quad \quad \quad \quad \quad \quad \quad \quad \quad \quad \times (\tau_{kN}\otimes I_M)\Big( \partial_i q \# E_{r,s}\otimes I_M\ P_{l',l} \Big)^* e_u \Bigg] \nonumber \\
&&\leq \frac{1}{N} \sum_i \sum_{1\leq r,s\leq kN} \E_R\left[ \tau_M\left( (\tau_{kN}\otimes I_M)\Big( \partial_i q \# E_{r,s}\otimes I_M\ P_{l',l} \Big) (\tau_{kN}\otimes I_M)\Big( \partial_i q \# E_{r,s}\otimes I_M\ P_{l',l} \Big)^* \right) \right] \.\nonumber 
\end{eqnarray}

\noindent Moreover we have, if $e_l$ is an orthornormal basis of $\mathbb C^k$,

\begin{align}
\label{comb2}
	&\sum\limits_{1\leq l,l' \leq k} \tau_M\left( (\tau_{kN}\otimes I_M)\Big( \partial_i q \# E_{r,s}\otimes I_M\ P_{l',l} \Big) (\tau_{kN}\otimes I_M)\Big( \partial_i q \# E_{r,s}\otimes I_M\ P_{l',l} \Big)^* \right) \nonumber \\
	&= \frac{1}{k^2}\sum\limits_{1\leq l,l' \leq k} \tau_M\Bigg( e_l^* \otimes I_M\ (\tau_{N}\otimes I_k \otimes I_M)\Big( \partial_i q \# E_{r,s}\otimes I_M\ \Big)\  e_{l'}e_{l'}^*\otimes I_M\ \\
	& \quad \quad \quad \quad \quad \quad \quad \quad \quad (\tau_{N}\otimes I_k \otimes I_M)\Big( \partial_i q \# E_{r,s}\otimes I_M \Big)^* e_l\otimes I_M\ \Bigg) \nonumber \\	
	&= \frac{1}{k}\tau_k\otimes\tau_M\left( (\tau_{N}\otimes I_k \otimes I_M)\Big( \partial_i q \# E_{r,s}\otimes I_M\ \Big)\ (\tau_{N}\otimes I_k \otimes I_M)\Big( \partial_i q \# E_{r,s}\otimes I_M \Big)^* \right) \. \nonumber \\ \nonumber 
\end{align}

\noindent Hence by combining equations \eqref{comb1} and \eqref{comb2} we have proved that

\begin{align}
\label{interm2}
	&\frac{k}{M} \sum\limits_{\substack{ 1\leq l,l' \leq k \\ 1\leq u,v\leq M }} \var_R\left(F^q_{l,l',v,u}(R^{kN})\right) \nonumber \\
	&\leq \frac{1}{kN} \sum_i \sum_{1\leq r,s\leq kN} \E_R\left[ \tau_k\otimes\tau_M\left( (\tau_{N}\otimes I_{kM})\Big( \partial_i q \# E_{r,s}\otimes I_M\ \Big)\ (\tau_{N}\otimes I_{kM})\Big( \partial_i q \# E_{r,s}\otimes I_M \Big)^* \right) \right]
\end{align}

\noindent Moreover, let us remind that, with the convention $A\times (B\otimes C)\times D=(AB)\otimes (CD)$, we have (for $q=Q=Ae^{\i\beta y P}$ but with obvious changes for $q=T$ )

$$ \partial_s q = \partial_s A\  e^{\i \beta y P}\ B + \i \beta y A\ \int_0^1  e^{\i (1-u)\beta y P}\ \partial_s P\ e^{\i u\beta y P}\ B du + A\  e^{\i \beta y P}\ \partial_s B \.$$

\noindent Consequently \eqref{interm2} is a finite linear combination of terms  of the three following types $Q_N^i=\mathbb E_R[q_N^i]$, $1\le i\le 3$, with

	\begin{align*}
	q_N^1 = \frac{1}{kN} \sum_{1\leq r,s\leq kN}  \tau_k\otimes\tau_M\Big( &(\tau_{N}\otimes I_{kM})\Big( A_1 E_{r,s}\otimes I_M\ A_2 e^{\i \beta y P}\ A_3 \Big) \\
	&(\tau_{N}\otimes I_{kM})\Big( B_3 E_{s,r}\otimes I_M\ B_2 e^{-\i \beta y P}\ B_1 \Big) \Big)\,
	\end{align*}
	
	\item
	\begin{align*}
	q_N^2 = \frac{\beta y}{kN} \int_{0}^{1} \sum_{1\leq r,s\leq kN} \tau_k\otimes\tau_M\Bigg( &(\tau_{N}\otimes I_{kM})\Big( A_1 e^{\i (1-u)\beta y P}\ A_2 E_{r,s}\otimes I_M\ A_3 e^{\i u\beta y P}\ A_4 P_{l',l} \Big) \\
	&(\tau_{N}\otimes I_{kM})\Big( P_{l,l'} B_3\ E_{s,r}\otimes I_M\ B_2 e^{-\i \beta y P}\ B_1 \Big) \Bigg)  du \,
	\end{align*}
	
	\item
	\begin{align}
\label{lastmaj}
	q_N^3 = \frac{(\beta y)^2}{kN} \int_{0}^{1} \int_{0}^{1} \sum_{1\leq r,s\leq kN}  \tau_k\otimes&\tau_M\Bigg( (\tau_{N}\otimes I_{kM})\Big( A_1 e^{\i (1-u)\beta y P}\ A_2 E_{r,s}\otimes I_M\ A_3 e^{\i u\beta y P}\ A_4 \Big) \\
	&(\tau_{N}\otimes I_{kM})\Big( B_4 e^{-\i v\beta y P}\ B_3 E_{s,r}\otimes I_M\ B_2 e^{-\i (1-v)\beta y P}\ B_1 \Big) \Bigg)  du\ dv \nonumber ,
	\end{align}

\noindent where the $A_i$ and $B_i$ are monomial in $U_t^k$. Besides the coefficients of this linear combination only depend on $A,B$ and 
$P$.

\noindent We first show how to estimate $q_N^3$. Let us recall that we set $(e_i)_{1\leq i\leq N}$, $(f_i)_{1\leq i\leq k}$ and 
$(g_i)_{1\leq i\leq M}$ as the canonical basis of $\C^M$, $\C^k$ and $\C^N$. Then, for any matrices 
$A,B,C,D\in \M_N(\C)\otimes\M_k(\C)\otimes\M_M(\C)$, we have 

\begin{align}
\label{ccomb1}
&\sum_{1\leq r,s\leq kN} \tr_{kM}\Bigg( \tr_N\otimes I_{kM} \Big( A\ E_{r,s}\otimes I_M\ B \Big) \times \tr_N\otimes I_{kM}\Big( C\ E_{s,r}\otimes I_M\ D \Big) \Bigg) \\
& = \sum_{ 1\leq a,b,r_1,s_1 \leq N} \sum_{ 1\leq c,d,r_2,s_2 \leq k} \sum_{ 1\leq e,f,g,h \leq M} g_a^*\otimes f_c^*\otimes e_e^*\ A\  g_{r_1}\otimes f_{r_2}\otimes e_f \times g_{s_1}^*\otimes f_{s_2}^*\otimes e_f^*\ B\ g_a\otimes f_d\otimes e_g \nonumber\\
& \quad \quad \quad \quad \quad \quad \quad \quad \quad \quad \quad \quad \quad \quad \quad \quad \quad \times g_b^*\otimes f_d^*\otimes e_g^*\ C\  g_{s_1}\otimes f_{s_2}\otimes e_h \times g_{r_1}^*\otimes f_{r_2}^*\otimes e_h^*\ D\  g_b\otimes f_c\otimes e_e \nonumber\\
& \nonumber\\
& = \sum\limits_{\substack{ 1\leq a \leq N \\ 1\leq c,d \leq k \\ 1\leq e,f,g,h \leq M}} g_a^*\otimes f_c^*\otimes e_e^*\ A\  I_N\otimes I_k\otimes (e_f e_h^*)\ D\ I_N\otimes (f_c f_d^*)\otimes (e_e e_g^*)\ C\ I_N\otimes I_k\otimes (e_h e_f^*)\ B\ g_a\otimes f_d\otimes e_g \nonumber\\ 
&= \sum_{1\leq u,v\leq M} \tr_N\Big( I_N\otimes\tr_{kM}( A\ I_{kN}\otimes e_u e_v^*\ D )\  I_N\otimes\tr_{kM}( C\ I_{kN}\otimes e_v e_u^*\ B ) \Big) \. \nonumber
\end{align}

\noindent Let $K_M$ be a $GUE$ matrix of size $M$, independent of everything else. Performing a Gaussian integration by part, we get

\begin{align}
\label{ccomb2}
&\frac{1}{M} \sum_{1\leq u,v\leq M} \tr_N\Big( I_N\otimes\tr_{kM}( A\ I_{kN}\otimes e_u e_v^*\ D )\  I_N\otimes\tr_{kM}( C\ I_{kN}\otimes e_v e_u^*\ B ) \Big) \\
&= \E_K\Bigg[ \tr_N\Bigg( I_N\otimes\tr_{kM}\Big( A\ I_{kN}\otimes K_M\ D \Big)\ I_N\otimes\tr_{kM}\Big( C\ I_{kN} \otimes K_M\ B \Big) \Bigg) \Bigg] . \nonumber
\end{align}

\noindent Consequently by combining equations \eqref{ccomb1} and \eqref{ccomb2}, we have

\begin{align*}
q_N^3=\left( \frac{\beta y M}{N} \right)^2 \int_0^1 \int_0^1 \E_K\Bigg[ \tau_N\Bigg( &(I_N\otimes \tau_{kM}) \Big( A_1 e^{\i (1-u)\beta y P} A_2\ I_{kN}\otimes K_M\ B_2 e^{-\i (1-v)\beta y P} B_1 \Big) \\
&\times (I_N\otimes \tau_{kM})\Big( B_4 e^{-\i v\beta y P} B_3\ I_{kN}\otimes K_M\ A_3 e^{\i u\beta y P} A_4 \Big) \Bigg) \Bigg] du\ dv \.
\end{align*}

\noindent Since $P$ is self-adjoint, we know that for any real $r$, $\norm{e^{\i\ r P(U_t^k)}}=1$. Besides $\norm{I_{N}\otimes \tau_{kM}(A)}\leq \norm{A}$, thus we can bound $q_N^3$ in \eqref{lastmaj} by

\begin{equation}
\label{majvar}
	|q_N^3|\le \left(\frac{yM}{N}\right)^2 \norm{A_1}\norm{A_2}\norm{A_3}\norm{A_4} \norm{B_1}\norm{B_2}\norm{B_3}\norm{B_4}\ \E_K\left[ \norm{K_M}^2 \right] .
\end{equation}

\noindent Finally, by \cite{monomat}, $\E_K\left[ \norm{K_M}^2 \right] $ is bounded by $3$. One can bound similarly $q_N^1$ and $q_N^2$, 
the only difference on the final result is that we would have $1$ or $y$ instead of $y^2$. Finally after taking the expectation with respect to 
$R^{kN}$ in equation \eqref{majvar} and using Proposition \ref{bornenorme}, we deduce that there exists $S$ which only depends on $A,B$ 
and $P$, hence is independent of $N,M,y,t,\alpha$ or $\beta$, such that the covariance in \eqref{interm2} is bounded by 

$$\frac{k}{M} \sum\limits_{\substack{ 1\leq l,l' \leq k \\ 1\leq u,v\leq M }} \var_R\left(F^q_{l,l',v,u}(R^{kN})\right)\le  \frac{(1+y^2)M^2}{N^2} S\left(\norm{X^N},\norm{Z^{NM}}\right) .$$

\noindent 

Thus, we deduce that there exists a polynomial $H$ which only depends on $A,B,C,D$ and $P$ such that the first term in the right hand side  
of \eqref{CS2} is bounded by

\begin{equation}\label{fs1}\frac{k}{M} \sum\limits_{\substack{ 1\leq l,l' \leq k \\ 1\leq u,v\leq M }} \left|  \cov_R\left(F^Q_{l,l',u,v}(R^{kN}), F^T_{l',l,u,v}(R^{kN})\right)\right| \le  \frac{(1+y^2) M^2}{N^2} H\left(\norm{X^N},\norm{Z^{NM}}\right) . \end{equation}
This completes the proof of the Lemma in the general case.
For the specific case where $Z^{NM} = (I_N\otimes Y_1^M,\dots ,I_N\otimes Y_q^M)$ and that these matrices commute, we can get better 
estimate in equation \eqref{majvar} thanks to a refinement of equation \eqref{ccomb2}. Indeed if $A,B,C,D$ are monomials in $U_t^k$, then 
we can write $A = A_1\otimes A_2$ in $\M_{kN}(\C)\otimes\M_{M}(\C)$ and likewise for $B,C,D$ such that $A_2,B_2,C_2,D_2$ commute. 
Thus,

\begin{align*}
&\frac{1}{M} \sum_{1\leq u,v\leq M} \tr_N\Big( I_N\otimes\tr_{kM}( A\ I_{kN}\otimes e_u e_v^*\ D )\  I_N\otimes\tr_{kM}( C\ I_{kN}\otimes e_v e_u^*\ B ) \Big) \\
&= \frac{1}{M} \tr_N\Big( I_N\otimes\tr_{k}( A_1 D_1 )\  I_N\otimes\tr_{k}( C_1 B_1 ) \Big) \sum_{1\leq u,v\leq M} \tr_{M}(A_2\ e_u e_v^*\ D_2) \tr_{M}(C_2\ e_u e_v^*\ B_2) \\
&= \frac{1}{M} \tr_N\Big( I_N\otimes\tr_{k}( A_1 D_1 )\  I_N\otimes\tr_{k}( C_1 B_1 ) \Big) \tr_{M}(D_2 A_2 B_2 C_2) \\
&= \frac{1}{M} \tr_N\Big( I_N\otimes\tr_{k}( A_1 D_1 )\  I_N\otimes\tr_{k}( C_1 B_1 ) \Big) \tr_{M}(A_2 D_2 C_2 B_2) \\
&= \frac{1}{M} \tr_{NM}\Big( I_{NM}\otimes\tr_{k}( A D )\  I_{NM}\otimes\tr_{k}( C B ) \Big) \.
\end{align*}

\noindent By linearity and density this equality is true if we assume that $A,B,C,D$ are power series in $U_t^k$. Thus combining this equality 
with equation \eqref{ccomb1}, we get that in this case
\begin{equation*}
|q_N^3|\le \left(\frac{y}{N}\right)^2 \norm{A_1}\norm{A_2}\norm{A_3}\norm{A_4} \norm{B_1}\norm{B_2}\norm{B_3}\norm{B_4} \.
\end{equation*}

\noindent The same argument as in the general case applies and the proof follows. 

\end{proof}

In order to prove Lemma \ref{core}, we show in the following lemma that the term appearing in the second line of equation \eqref{intermedi} 
vanishes.

\begin{lemma}
	\label{nondiag}
	Let $U_t^k,P_{1,2},Q$ and $T$ be defined as in Lemma \ref{grcalc}, then $\P_{X^N}$-almost surely,
	
	$$  \lim_{k\to \infty} k^3 \tau_M\left( \E_R\left[(\tau_{kN}\otimes I_M)(QP_{1,2})\right] \E_R\left[(\tau_{kN}\otimes I_M)(TP_{1,2})\right] \right) = 0 \.$$
	
\end{lemma}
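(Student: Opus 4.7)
The plan is to prove the much stronger statement that $\E_R[(\tau_{kN}\otimes I_M)(QP_{1,2})]=0$ identically for every $k$ (and likewise for $T$), so the limit is trivially zero. The key ingredient is the unitary invariance of the GUE matrix $R^{kN}$.

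First I would verify that, conditionally on $X^N$, the joint law of $U_t^k$ is invariant under conjugation by unitaries of the form $I_N\otimes V\otimes I_M$ with $V\in U(k)$. The deterministic pieces $X^N\otimes I_k\otimes I_M$ and $Z^{NM}\otimes I_k$ commute with $I_N\otimes V\otimes I_M$, since conjugation acts trivially on the $I_k$ sitting in the middle tensor factor, and hence are fixed by the conjugation. As for $R^{kN}\otimes I_M$, the law of the GUE matrix $R^{kN}$ is invariant under conjugation by any unitary in $U(kN)$, and in particular by $I_N\otimes V$. It follows that $Q=(A\,e^{\i\beta yP}B)(U_t^k)$ has the same conditional distribution as $(I_N\otimes V\otimes I_M)\,Q\,(I_N\otimes V^*\otimes I_M)$.

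Next I would exploit a partial-cyclic property of $\tau_{kN}\otimes I_M$: for any $B\in M_{kN}$ and $X\in M_{kN}\otimes M_M$, expanding $X$ as a sum of simple tensors and using the cyclicity of $\tau_{kN}$ gives $(\tau_{kN}\otimes I_M)((B\otimes I_M)X)=(\tau_{kN}\otimes I_M)(X(B\otimes I_M))$. Applied with $B=I_N\otimes V$ to the identity above, this yields, for every $V\in U(k)$,
\[
\E_R\big[(\tau_{kN}\otimes I_M)(QP_{1,2})\big]=\E_R\big[(\tau_{kN}\otimes I_M)\big(Q\,(I_N\otimes V^*E_{1,2}V\otimes I_M)\big)\big].
\]

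Finally I would average over $V$ with Haar measure on $U(k)$ and invoke the elementary identity $\int_{U(k)}V^*MV\,dV=\tau_k(M)\,I_k$ for $M\in M_k$. Since $\tau_k(E_{1,2})=0$, the right-hand side collapses to $0$, and therefore $\E_R[(\tau_{kN}\otimes I_M)(QP_{1,2})]=0$, $\P_{X^N}$-almost surely. The identical reasoning, swapping $Q$ for $T$, gives $\E_R[(\tau_{kN}\otimes I_M)(TP_{1,2})]=0$, so the expression inside the limit vanishes for every $k$. The only mildly delicate step is the partial-cyclic identity; everything else is a direct application of GUE invariance and a one-line Haar averaging, and no genuine $k\to\infty$ estimate is needed.
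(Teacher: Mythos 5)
Your proof is correct, and it takes a genuinely different (and markedly shorter) route than the paper's. The paper only exploits invariance of $U_t^k$ under conjugation by permutation matrices $I_N\otimes U\otimes I_M$ (in the proof of Lemma \ref{grcalc}), and then proves Lemma \ref{nondiag} by a substantial analytic argument: introducing the generating function $g_{k,a}(y)=\sum_n d_n(y)a^n$ over degree-indexed suprema $d_n(y)$ of the relevant expectations, reducing to $y=0$ via a Gronwall inequality, and then bounding the $y=0$ case inductively by combining the Schwinger--Dyson equation with a Poincar\'e-inequality estimate on the covariance term, ultimately deducing only the decay rate $o(k^{-3/2})$. Your observation is that the same conjugation invariance, pushed to the full unitary group $I_N\otimes U(k)\otimes I_M$ rather than just permutations, already kills the off-diagonal term exactly: $\E_R\big[(\tau_{kN}\otimes I_M)(QP_{1,2})\big]=0$ for every $k$, so the limit is trivial. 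All three ingredients in your argument check out: the deterministic pieces $X^N\otimes I_k\otimes I_M$ and $Z^{NM}\otimes I_k=\sum_{u,v}E_{u,v}\otimes I_k\otimes A^j_{u,v}$ do commute with $I_N\otimes V\otimes I_M$; the GUE law of $R^{kN}$ is invariant under conjugation by $I_N\otimes V\in U(kN)$; the partial-cyclic identity $(\tau_{kN}\otimes I_M)((B\otimes I_M)X)=(\tau_{kN}\otimes I_M)(X(B\otimes I_M))$ for $B\in\M_{kN}$ follows at once by writing $X$ as a sum of simple tensors and using cyclicity of $\tau_{kN}$; and $\int_{U(k)}V^*E_{1,2}V\,dV=\tau_k(E_{1,2})I_k=0$. (You do not even need the Haar average: conjugating by the single diagonal unitary $V=\mathrm{diag}(1,-1,1,\dots,1)$ sends $E_{1,2}$ to $-E_{1,2}$ and already forces the expectation to vanish.) What your argument buys is considerable: the proof collapses to a few lines, yields the exact vanishing rather than an asymptotic rate, and in fact, if carried back into the proof of Lemma \ref{grcalc}, shows that the $k^3$ cross-term there never arises in the first place, so Lemma \ref{nondiag} could be dispensed with entirely. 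What the paper's approach buys is, arguably, nothing for this particular lemma; it appears to be an artifact of having committed to the permutation-invariance reduction earlier in the argument.
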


\begin{proof}

It is enough to show that given $y\in \R$ and monomial $A$ and $B$, we have

$$ \lim_{k\to \infty} k^{3/2} \E_R\left[(\tau_{kN}\otimes I_M)((A\ e^{\i y P}\ B)(U_t^k)\ P_{1,2})\right] = 0 \.$$

\noindent For this purpose, let us define for monomials $A,B$ and $y\ge 0$

$$ f_{A,B}(y) = \E_R\left[(\tr_{kN}\otimes I_M)((A\ e^{\i y P}\ B)(U_t^k)\ P_{1,2})\right] \.$$

\noindent  We want to show that $f_{A,B}$ goes to zero faster than $k^{-3/2}$ and first show that we can reduce the problem to the case 
$y=0$. To this end, we also define

$$ d_n(y) = \sup\limits_{\deg(A)+\deg(B)\leq n} \norm{f_{A,B}(y)} \.$$

\noindent We know thanks to Proposition \ref{bornenorme} that there exist constants $\alpha$ and $C$ such that for any $i$ and $n\leq \alpha kN/2$,

$$ \E\left[ \norm{R^{kN}_i}^n \right] \leq C^n \.$$

\noindent Consequently, $P_{X^N}$-almost surely, there exist constants $\gamma$ and $D$ (which do depend on, $N$, $\norm{X^N}$ and $\norm{Z^{NM}}$) such that for any $n\leq \gamma k$,
\begin{equation}
	\label{majorationstraightfor}
	d_n(y) \leq D^n \. 
\end{equation}

\noindent It is important to point out that this constant $D$ can be very large in $N$, it does not matter since in the end we will show 
that this quantity will go towards $0$ when $k$ goes to infinity and the other parameters such as $N,M$ or $y$ are fixed. Next, we define

$$g_{k,a}(y) = \sum_{0 \leq n\leq \gamma k} d_n(y) a^n \.$$

\noindent But if we set $c_L(P)$ to be  the coefficient associated to the monomial $L$ in $P$, $P=\sum c_L(P) L$,  we have

$$ \left| \frac{d f_{A,B}(y)}{dy} \right| \leq \sum_{L \text{ monomials}} |c_L(P)|\ d_{\deg(A)+\deg(B)+ \deg(L)}(y)  \.$$

\noindent Thus, for any $y\geq 0$, any monomials $A,B$ with $\deg (A)+\deg(B)=n$, 

$$ f_{A,B}(y) \leq f_{A,B}(0) + \sum_{L \text{ monomials}} |c_L(P)|\ \int_0^y d_{n + \deg(L)}(u)\  du \. $$

\noindent Therefore, we have for $y\geq 0$ and any $n\geq 0$,

$$ a^n d_n(y) \leq a^n d_n(0) + \sum_{L \text{ monomials}} |c_L(P)| a^{-\deg(L)}\ \int_0^y d_{n + \deg(L)}(u) a^{n+ \deg(L)} du \. $$

\noindent And with $\norm{.}_{a^{-1}}$ defined as in \eqref{normA}, thanks to \eqref{majorationstraightfor}, we have a constant $c_a$ independent of $k$ such that 

$$ g_{k,a}(y) \leq g_{k,a}(0) + c_a (aD)^{\gamma k} + \norm{P}_{a^{-1}} \int_0^y g_{k,a}(u) du \.$$

\noindent As a consequence of  Gronwall's inequality, we deduce that for $y\geq 0$,

\begin{equation}
	\label{gronwall}
	g_{k,a}(y) \leq \left( g_{k,a}(0) + c_a (aD)^{\gamma k} \right) e^{y\norm{P}_{a^{-1}}} \.
\end{equation}
Hence, it is enough   to find an estimate on $g_{k,a}(0)$.  First for any $j$, one can write $Z^{NM}_j = \sum_{1\leq u,v\leq N} E_{u,v}\otimes I_k\otimes A_{u,v}^j$ for some matrices $A_{u,v}^j$, then we define

$$ U_{N,k} = \Big( R^{kN}, X^N\otimes I_k , (E_{u,v}\otimes I_k)_{u,v} \Big) \, \quad c_n = \sup_{\deg(L)\leq n,\ L \text{ monomial}} \left| \E_R\left[\tr_{kN}(L(U_{N,k})\ P_{1,2})\right] \right| \.$$

\noindent Note that since we are taking the trace of $L(U_{N,k}) P_{1,2}$ with $P_{1,2} = I_N \otimes f_1 f_2^*\otimes I_M$, we have $c_0 = c_1 = 0$. We consider $K$ the supremum over $u,v,j$ of $\norm{A_{u,v}^j}$, we also naturally assume that $K\geq 1$. Thus, since 

$$ Z^{NM}_j = \sum_{1\leq u,v\leq N} E_{u,v}\otimes I_k\otimes A_{u,v}^j \,\quad X_t^N = e^{-t/2}X^N\otimes I_k + (1-e^{-t})^{1/2} R^{kN} \, $$

\noindent if $L$ is a monomial in $U_t^k = (X_t^N\otimes I_M,Z^{NM}\otimes I_k,{Z^{NM}}^*\otimes I_k)$ of degree $n$, then we can 
view $L(U_t^k)$ as a sum of at most $2^n N^{2n}$ monomials in $e^{-t/2}X^N\otimes I_k$, $(1-e^{-t})^{1/2} R^{kN}$, 
$E_{u,v}\otimes I_k\otimes A_{u,v}^j$, $E_{v,u}\otimes I_k\otimes {A_{u,v}^j}^*$. Consequently, since $\sup_{u,v,j} \norm{A^j_{u,v}} \leq K$, 
we have
\begin{align*}
	\norm{ \E_R\left[\tr_{kN}\otimes I_M(L(U_t^k) P_{1,2})\right] } \leq 2^n N^{2n} K^n c_n \.
\end{align*}

\noindent Thus, if we set 
$$f_p(a) = \sum_{0 \leq n\leq p} c_n a^n \, $$ 

\noindent we have 
\begin{equation}
	\label{gppf}
	g_{k,a}(0) \leq f_{\gamma k}(2N^2K a) \.
\end{equation}

Now we need to study the behaviour of $f_k(a)$ when $k$ goes to infinity for $a$ small enough. In order to do so, let us consider a 
monomial $L$ in $U_{N,k}$. Since $X^N\otimes I_k$ and $E_{u,v}\otimes I_k$ commute with $P_{1,2}$, one can assume that 
$L=R_i^{kN} S$ for some $i$ (unless $L$ is a monomial in $X^N\otimes I_k$ and $E_{u,v}\otimes I_k$ in which case 
$\tr_{kN}(L P_{1,2}) = 0$), thus thanks to Schwinger-Dyson equation (see Proposition \ref{SD}),

\begin{equation}\label{asd}
\E_R\left[\tr_{kN}(L P_{1,2})\right] = \frac{1}{N k} \E_R\left[ \tr_{kN}\otimes\tr_{kN}( \partial_i (S P_{1,2})) \right] = \frac{1}{Nk} \sum_{S = UR_i V} \E[\tr_{Nk}(U) \tr_{Nk}(VP_{1,2}) ] . \\
\end{equation}

\noindent To use this Schwinger-Dyson equation as an inductive bound we shall use Poincar\'e inequality to bound the covariance in the above right hand side.We hence compute for any monomial $V$,

\begin{align}
\norm{\nabla \tr_{kN}(VP_{1,2})}_2^2 &= \frac{1}{Nk} \sum_i \sum_{r,s}\tr_{kN}(\partial_s V\# E_{r,s}P_{1,2}) \tr_{kN}(\partial_s V\# E_{s,r}P_{1,2})^* \nonumber \\
&= \sum_i\sum_{V=AR_iB,V=CR_iD} \frac{1}{Nk} \tr_{kN}(BP_{1,2}AC^*P_{1,2}^*D^*)\label{lkj}
\end{align}

\noindent Thus with $\Theta = \max \left\{ C, \norm{X^N}, 1 \right\}$, since $P_{1,2}$ is of rank $N$, we get
$$ \text{Var}_R(\tr_{kN}(V P_{1,2})) \leq \frac{1}{k}(\deg V)^2 \Theta^{2\deg V} . $$

\noindent Likewise, for any monomial $U$, 
$$ \text{Var}_R(\tr_{kN}(U)) \leq (\deg U)^2 \Theta^{2\deg U} . $$

\noindent Therefore,  if $n$ is the degree of $L$, we deduce from \eqref{lkj}, \eqref{asd} and Poincar\'e inequality that 

\begin{align*}
\left| \E_R\left[\tr_{kN}(L P_{1,2})\right] \right| &\leq \frac{1}{ k^{3/2}N} \sum_{i=0}^{n-2} i(n-2-i) \Theta^n + \sum_{S=UR_iV} \left| \frac{1}{Nk} \E_R[\tr_{kN}(U)] \E_R[\tr_{kN}(V P_{1,2})] \right| \\
&\leq \frac{n^3 \Theta^n}{k^{3/2}N} + \sum_{S=UR_iV} \left| \E_R[\tr_{kN}(V P_{1,2})] \right| \Theta^{\deg U} \. \\
\end{align*}

\noindent By replacing $D$ by $\max \{D,\Theta\}$, we can always assume that $\Theta < D$. We also bound $N^{-1}$ by $1$, thus for $n\geq 2$,

$$ c_n \leq \frac{n^3 D^n}{k^{3/2}} + \sum_{i=0}^{n-2} c_i D^{n-2-i} \.$$

\noindent We use this estimate to bound $f_g(a)$ with 
$g$ such that $g^3 D^g \leq \sqrt{k}$. Since $c_0=c_1=0$ and for any $n\leq g$, $n^3 D^n k^{-3/2} \leq k^{-1}$, we have

\begin{equation*}
f_g(a) = \sum_{n=2}^g c_n a^n \leq \frac{1}{k}\times \frac{a^2-a^{g+1}}{1-a} + a^2 \sum_{m=0}^{g-2} \sum_{n=0}^m c_i D^{n-i} a^n 
\leq \frac{1}{k}\times \frac{a^2}{1-a} + a^2 \frac{f_g(a)}{1-Da} \.
\end{equation*}

\noindent Thus, for $a$ small enough,

$$ f_g(a) \leq \frac{(1-Da)a^2}{(1-a)(1-Da-a^2)}\times \frac{1}{k} \. $$

\noindent Besides, we want $g$ such that $g^3 D^g \leq \sqrt{k}$, hence we can take $g$ the integer part of $\frac{\ln k}{2(\ln D +3)}$. 
Since by definition we have $c_n\leq \Theta^n$, this also means that $c_n\leq D^n$, thus

\begin{equation*}
\sum_{g<n\leq \gamma k} c_n a^n \leq \sum_{n>g} (Da)^n \leq \frac{(Da)^{g+1}}{1-Da} \leq k^{\frac{\ln(Da)}{2(\ln D +3)}}\times \frac{1}{1-Da} \.
\end{equation*}

\noindent Thus, if we fix $a$ small enough, $f_{\gamma k} (a)= O(1/k)$. Hence, we deduce from  \eqref{gppf} that for $a$ small enough (depending on $N,K$ but not $k$)

$$ g_{k,a}(0)\leq f_k(2N^2K a).$$

\noindent Therefore, by plugging this inequality in \eqref{gronwall}, we obtain for $a$ small enough and $y\geq 0$, $g_{k,a}(y)= O(1/k)$. 
By replacing $P$ by $-P$, we have for $a$ small enough and any $y\in\R$, $g_{k,a}(y)= O(1/k)$. By definition of $d_n$, we have

$$ k^{3/2}\left| \E_R\left[(\tau_{kN}\otimes I_M)((A\ e^{\i y P}\ B)(U_t^k)\ P_{1,2})\right]\right| \leq k^{1/2} d_{\deg(A)+\deg(B)}(y) \.$$

\noindent Hence for $k$ large enough,

$$ k^{3/2} \left| \E_R\left[(\tau_{kN}\otimes I_M)((A\ e^{\i y P}\ B)(U_t^k)\ P_{1,2})\right]\right| \leq k^{1/2} g_{k,a}(y) a^{-\deg(A)-\deg(B)} \, $$
which goes to zero as $k$ goes to infinity since $g_{k,a}(y)=O(\frac{1}{k})$.

\end{proof}

\vspace{1cm}

We can now prove Theorem \ref{imp1}.

\begin{proof}[Proof of Theorem \ref{imp1}] It is based  on Theorem \ref{imp2}. To use it,  we want to take the Fourier transform of $f$ and 
use Fourier inversion formula. However we did not assume that $f$ is integrable. Thus the first step of the proof is to show that up to a 
term of order $e^{-N}$, we can assume that $f$ has compact support. Thanks to Proposition \ref{bornenorme}, there exist constants $D$ 
and $\alpha$ such that for any $N$ and $i$, for any $u\geq 0$,

$$ \P\left(\norm{X^N_i}\geq u+D \right) \leq e^{-\alpha u N} \. $$

\noindent Thus, there exist constants $C$ and $K$, independent of $M,N,P$ and $f$, such that

\begin{align*}
&\left| \E\left[\frac{1}{MN}\tr\Big(f\left(P\left( X^N\otimes I_M,Z^{NM},{Z^{NM}}^*\right)\right)\Big) \1_{\left\{ \exists i, \norm{X_i^N} > D+1 \right\}} \right] \right| \\
&\leq \E\left[\norm{f\left(P\left( X^N\otimes I_M,Z^{NM},{Z^{NM}}^*\right)\right)} \1_{\left\{ \exists i, \norm{X_i^N} > D+1 \right\}} \right] \\
&\leq \norm{f}_{\infty} \P\left( \exists i, \norm{X_i^N} > D+1 \right) \\
&\leq C \norm{f}_{\infty} e^{-K N} \.\\
\end{align*}

\noindent There exists a polynomial $H$ which only depends on $P$ such that

$$ \norm{P\left(X^N\otimes I_M,Z^{NM},{Z^{NM}}^*\right)} \1_{\left\{ \forall i, \norm{X_i^N}\leq D+1 \right\}} \leq H\left( \norm{Z^{NM}} \right) \. $$

\noindent We can also assume that $\norm{P(x\otimes I_M,Z^{NM},{Z^{NM}}^*)} \leq H\left( \norm{Z^{NM}} \right)$. 
We take $g$ a $\mathcal{C}^{\infty}$-function which takes value $1$ on $[-H\left( \norm{Z^{NM}} \right),H\left( \norm{Z^{NM}} \right)]$, 
$0$ on $[-H\left( \norm{Z^{NM}} \right)-1,H\left( \norm{Z^{NM}} \right)+1]^{c}$ and belongs to  $[0,1]$ elsewhere. From the bound above, 
we deduce

\begin{align}
\label{redux}
&\left| \E\left[\frac{1}{MN}\tr\Big(f\left(P\left( X^N\otimes I_M,Z^{NM},{Z^{NM}}^*\right)\right)\Big)\right] - \tau\Big(f\left(P\left( x\otimes I_M,Z^{NM},{Z^{NM}}^*\right)\right)\Big) \right| \nonumber \\
&\leq \Bigg| \E\left[\frac{1}{MN}\tr\Big(f\left(P\left( X^N\otimes I_M,Z^{NM},{Z^{NM}}^*\right)\right)\Big) \1_{\left\{ \forall i, \norm{X_i^N}\leq D+1 \right\}}\right] \nonumber \\
&\quad \quad - \tau\Big(f\left(P\left( x\otimes I_M,Z^{NM},{Z^{NM}}^*\right)\right)\Big) \Bigg| +C  \norm{f}_{\infty} e^{-K N} \\
&\leq \Bigg| \E\left[\frac{1}{MN}\tr\Big((fg)\left(P\left( X^N\otimes I_M,Z^{NM},{Z^{NM}}^*\right)\right)\Big)\right] \nonumber \\
& \quad \quad - \tau\Big((fg)\left(P\left( x\otimes I_M,Z^{NM},{Z^{NM}}^*\right)\right)\Big) \Bigg| + 2 C   \norm{f}_{\infty} e^{-K N} . \nonumber 
\end{align}

\noindent Since $fg$ has compact support and can be differentiated six times, we can take its Fourier transform and then invert it so that with the convention $ \hat{h}(y) = \frac{1}{2\pi} \int_{\R} h(x) e^{-\i xy} dx$, we have

$$ \forall x\in\R,\quad (fg)(x) = \int_{\R} e^{\i xy} \widehat{fg}(y)\ dy \. $$

\noindent Besides, since if $h$ has compact support bounded by $K$ then $\norm{\hat{h}}_{\infty} \leq 2K \norm{h}_{\infty} $, we have

\begin{align*}
	\int_{\R} (|y|+y^4)\ \left| \widehat{fg}(y) \right|\ dy &\leq \int_{\R} \frac{|y|+|y|^3+y^4+y^6}{1+y^2}\ \left| \widehat{fg}(y) \right|\ dy \\
	&\leq \bigintss_{\R} \frac{\left| \widehat{(fg)^{(1)}}(y) \right| + \left| \widehat{(fg)^{(3)}}(y) \right| + \left| \widehat{(fg)^{(4)}}(y) \right| + \left| \widehat{(fg)^{(6)}}(y) \right|}{1+y^2}\ dy \\
	&\leq 2 \left( H\left(\norm{Z^{NM}}\right) + 1\right) \norm{fg}_{\mathcal{C}^6} \int_{\R} \frac{1}{1+y^2}\ dy \\
	&\leq C \left( H\left(\norm{Z^{NM}}\right) + 1\right) \norm{f}_{\mathcal{C}^6} \,
\end{align*}

\noindent for some absolute constant $C$. Hence $fg$ satisfies the hypothesis of Theorem \ref{imp2} with $\mu(dy) = \widehat{fg}(y) dy$. 
Therefore,  combining with equation \eqref{redux}, we conclude that

\begin{align*}
&\left| \E\left[\frac{1}{MN}\tr\Big(f\left(P\left( X^N\otimes I_M,Z^{NM},{Z^{NM}}^*\right)\right)\Big)\right] - \tau\Big(f\left(P\left( x\otimes I_M,Z^{NM},{Z^{NM}}^*\right)\right)\Big) \right| \nonumber \\
&\leq \norm{f}_{\infty} e^{-K N} + \frac{M^2}{N^2} L_P\left(\norm{Z^{NM}}\right) \int_{\R} (|y|+y^4)\ \left| \widehat{fg}(y) \right|\ dy \\
&\leq \frac{M^2}{N^2} \left( C L_P\left(\norm{Z^{NM}}\right) \left( H\left(\norm{Z^{NM}}\right) + 1\right) + e^{-K N} \right) \norm{f}_{\mathcal{C}^6} \. \\
\end{align*}

\end{proof}

\section{Consequences}
\label{proofcoro}
In this section, we deduce Corollaries \ref{stieljes} and \ref{LU}, as  well as  Theorems \ref{strongconv} and \ref{concentration}.

\subsection{Proof of Corollary \ref{stieljes}}

We could directly apply Theorem \ref{imp1} to $f_z : x\to (z-x)^{-1}$, however we have $\norm{f}_{\mathbb{C}^6} = O\left((\Im z)^7 \right) $ 
when we want an exponent $5$. Since $\overline{G_{P(x)}(z)} = G_{P(x)}(\overline{z})$ we can assume that $\Im z < 0$, but then

$$ f_z(x) = \int_{0}^{\infty} e^{\i x y}\ (\i e^{-\i y z})\ dy \.$$

\noindent Consequently, with $\mu_z(dy) = \i e^{-\i y z}\ dy $, we have

\begin{align*}
	\int_{0}^{\infty} (y+y^4)\ d|\mu_z|(y) &= \frac{1}{|\Im z|^2} + \frac{24}{|\Im_z|^5} \.
\end{align*}

\noindent Thus, by applying Theorem \ref{imp2} with $Z^{NM} = \left(I_N\otimes Y_1^M,\dots,I_N\otimes Y_p^M\right)$, $P$ and $f_z$, 
we have

$$ \left| \E\left[ G_{P(X^N\otimes I_M, I_N\otimes Y^M)}(z) \right] - G_{P(x\otimes I_M, I_N\otimes Y^M)}(z) \right| \leq \frac{M^2}{N^2} L_P\left(\norm{Z^{NM}}\right) \int_{\R} (1+y^4)\  d|\mu_z|(y) \.$$

\noindent Now since $\norm{Z^{NM}} = \left(\norm{Y^M_1},\dots,\norm{Y_p^M}\right)$ which does not depend on $N$, we get the 
desired estimate

$$ \left| \E\left[ G_{P(X^N)}(z) \right] - G_{P(x)}(z) \right| \leq \frac{M^2}{N^2} L_P\left(\norm{Y^M_1},\dots,\norm{Y_p^M}\right)  \left(\frac{1}{|\Im z|^2} + \frac{24}{|\Im_z|^5}\right) \. $$

\subsection{Proof of Corollary \ref{LU}}

Let $f:\R\to\R$ be a Lipschitz function uniformly bounded by $1$ and with Lipschitz constant at most $1$. We want to bound from above 
the quantity

\begin{equation}
\label{initi}
	\Delta_{N,M}(f)=\Bigg| \E\left[\frac{1}{MN}\tr_{NM}\Big(f\left(P\left(X^N\otimes I_M,I_N\otimes Y_M\right)\right)\Big)\right] - \tau\otimes\tau_M\Big(f\left(P\left(x\otimes I_M,I_N\otimes Y_M\right)\right)\Big) \Bigg| 
\end{equation}

\noindent Firstly, one can see that with the same argument as in the proof of Theorem \ref{imp1} (in particular equation \eqref{redux}), we 
can assume that the support of $f$ is bounded by a constant $S = H(\norm{Y^M})$ for some polynomial $H$ independent of everything. 
However, we cannot apply directly Theorem \ref{imp1} since $f$ is not regular enough. In order to deal with this issue we use the 
convolution with Gaussian random variable, thus let $G$ be a centered Gaussian random variable, we set

$$ f_{\varepsilon} : x\to \E[f(x+\varepsilon G)] \.$$

\noindent Since $f$ has Lipschitz constant $1$, we have for any $x\in\R$,

$$ \left| \E[f(x+\varepsilon G)] -f(x)\right| \leq \varepsilon \.$$

\noindent Since $f_{\varepsilon}$ is regular enough we could now apply Theorem \ref{imp1}, however we a get better result by using 
Theorem \ref{imp2}. Indeed we have

\begin{align*}
	f_{\varepsilon}(x) &= \frac{1}{\sqrt{2\pi}} \int_{\R} f(x+\varepsilon y) e^{-y^2/2}\  dy \\
	&= \frac{1}{\sqrt{2\pi}} \int_{\R} f(y) \frac{e^{-\frac{(x-y)^2}{2\varepsilon^2}}}{\varepsilon}\  dy \\
	&= \frac{1}{2\pi} \int_{\R} f(y) \int_{\R} e^{\i (x-y) u} e^{-(u\varepsilon)^2/2}\ du\ dy \.
\end{align*}

\noindent Since the support of $f$ is bounded, we can apply Fubini's Theorem:

\begin{align*}
f_{\varepsilon}(x) &= \frac{1}{2\pi} \int_{\R} e^{\i ux} \int_{\R} f(y) e^{- \i y u}  dy\  e^{-(u\varepsilon)^2/2}\  du \.
\end{align*}

\noindent And so with the convention $ \hat{h}(u) = \frac{1}{2\pi} \int_{\R} h(y) e^{-\i uy} dy$, we have

$$ f_{\varepsilon}(x) = \int_{\R} e^{\i ux} \hat{f}(u)  e^{-(u\varepsilon)^2/2} du \.$$

\noindent Thus, if we set $\mu_{\varepsilon}(dy) = \hat{f}(y)  e^{-(y\varepsilon)^2/2} dy$, then, since $\norm{f}_{\infty} \leq 1$,

$$ \int_{\R} (1+y^4) d|\mu_{\varepsilon}|(y) \leq 2S \int_{\R} (1+y^4)e^{-y^2/2}\ dy\ \varepsilon^{-5} \. $$

\noindent Consequently, we can apply Theorem \ref{imp2} with $f_{\varepsilon}$ and since $\norm{f-f_{\varepsilon}}_{\infty}\leq \varepsilon$, 
there exists a polynomial $R_P$ such that the difference in  \eqref{initi} can be bounded by:

$$\Delta_{N,M}(f)\le  2\varepsilon + R_P\left(\norm{Y^M}\right) \frac{M^2}{N^2 \varepsilon^5}\,. $$
We finally choose  $\varepsilon = N^{-1/3}$ to get the desired bound
$$\Delta_{N,M}(f)\le 2 R_P\left(\norm{Y^M}\right) \frac{M^2}{N^{1/3}} \. $$

\subsection{Proof of Theorem \ref{strongconv}}

Firstly, we need to define properly the operator norm of tensor of $\CC^*$-algebras. When writing the proof it appears that we should 
work with the minimal tensor product.

\begin{defi}
	\label{mini}
	Let $\A$ and $\B$ be $\CC^*$-algebras with faithful representations $(H_{\A},\phi_{\A})$ and $(H_{\B},\phi_{\B})$, then if $\otimes_2$ 
	is the tensor product of Hilbert spaces, $\A\otimes_{\min}\B$ is the completion of the image of $\phi_{\A}\otimes\phi_{\B}$ in 
	$B(H_{\A}\otimes_2 H_{\B})$ for the operator norm in this space. This definition is independent of the representations that we fixed.
\end{defi}

The following two lemmas are  well  known facts in operator algebra. The first one is Lemma 4.1.8 from \cite{ozabr}:
\begin{lemma}
	\label{faith}
	Let $(\A,\tau_{\A})$ and $(\B,\tau_{\B})$ be $\CC^*$-algebra with faithful traces, then $\tau_{\A}\otimes\tau_{\B}$ extends uniquely 
	to a faithful trace $\tau_{\A}\otimes_{\min}\tau_{\B}$ on $\A\otimes_{\min}\B$. 
\end{lemma}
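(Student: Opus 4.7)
The plan is to realize $\tau_{\A}\otimes_{\min}\tau_{\B}$ as a vector state on a faithful representation of $\A\otimes_{\min}\B$, and then to obtain faithfulness from the standard fact that the GNS cyclic vector associated to a faithful tracial state is separating for the generated von Neumann algebra.

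First I would take $(\pi_{\A},H_{\A},\xi_{\A})$ and $(\pi_{\B},H_{\B},\xi_{\B})$ to be the GNS triples for $\tau_{\A}$ and $\tau_{\B}$. Faithfulness of these traces implies that $\pi_{\A}$ and $\pi_{\B}$ are faithful $*$-homomorphisms, and that $\xi_{\A}$, $\xi_{\B}$ are cyclic and separating for the enveloping von Neumann algebras $M_{\A}:=\pi_{\A}(\A)''$ and $M_{\B}:=\pi_{\B}(\B)''$ (a faithful trace produces a cyclic vector which, by the tracial property, is also separating for the bicommutant). By the very definition of the minimal $\CC^*$-norm, $\pi_{\A}\otimes\pi_{\B}$ extends to a faithful isometric embedding $\A\otimes_{\min}\B\hookrightarrow B(H_{\A}\otimes_2 H_{\B})$, and I would define $\tau_{\A}\otimes_{\min}\tau_{\B}$ to be the pullback of the vector state at $\xi_{\A}\otimes\xi_{\B}$ through this embedding. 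On an elementary tensor $a\otimes b$ this state equals $\tau_{\A}(a)\tau_{\B}(b)$, so it extends the algebraic functional $\tau_{\A}\otimes\tau_{\B}$; uniqueness follows from norm-density of the algebraic tensor product, and traciality, being an algebraic identity on the dense subalgebra, transfers to the extension by continuity.

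The hard part is faithfulness. If $x\in\A\otimes_{\min}\B$ is positive with $(\tau_{\A}\otimes_{\min}\tau_{\B})(x)=0$, then $\|x^{1/2}(\xi_{\A}\otimes\xi_{\B})\|^2=0$, so $x^{1/2}(\xi_{\A}\otimes\xi_{\B})=0$. I would then view $x^{1/2}$ inside the spatial von Neumann algebra tensor product $M_{\A}\bar{\otimes}M_{\B}$ and invoke the commutant theorem $(M_{\A}\bar{\otimes}M_{\B})'=M_{\A}'\bar{\otimes}M_{\B}'$, together with the fact that $\xi_{\A}\otimes\xi_{\B}$ is cyclic for $M_{\A}'\bar{\otimes}M_{\B}'$ (each $\xi_i$ is cyclic for $M_i'$ since it is separating for $M_i$), to conclude that $\xi_{\A}\otimes\xi_{\B}$ is separating for $M_{\A}\bar{\otimes}M_{\B}$. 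This forces $x^{1/2}=0$, hence $x=0$. The main obstacle is precisely this separating-vector lemma for von Neumann tensor products, and it is where the minimal choice of $\CC^*$-tensor norm is essential: the spatial realization $\A\otimes_{\min}\B\subset M_{\A}\bar{\otimes}M_{\B}$ is what enables the argument in the first place.
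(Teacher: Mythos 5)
The paper does not actually give a proof of Lemma \ref{faith}: it states the result and cites Brown--Ozawa \cite{ozabr}, Lemma 4.1.8 for it, so there is no paper proof to compare against. Your argument is correct and is essentially the standard one found in such references: GNS, spatial realization of the minimal tensor product, and separating property of $\xi_{\A}\otimes\xi_{\B}$ for $M_{\A}\bar{\otimes}M_{\B}$. One remark worth making: you invoke the full Tomita commutation theorem $(M_{\A}\bar{\otimes}M_{\B})'=M_{\A}'\bar{\otimes}M_{\B}'$, which is a deep result, but you only need the elementary inclusion $M_{\A}'\bar{\otimes}M_{\B}'\subseteq(M_{\A}\bar{\otimes}M_{\B})'$. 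Indeed, cyclicity of $\xi_{\A}\otimes\xi_{\B}$ for the subalgebra $M_{\A}'\bar{\otimes}M_{\B}'$ already forces cyclicity for the larger commutant $(M_{\A}\bar{\otimes}M_{\B})'$, which is what you use. In the tracial setting this can be made entirely explicit without appealing to any tensor commutant result: the modular conjugations $J_{\A},J_{\B}$ (the antiunitary extensions of $a\xi_{\A}\mapsto a^*\xi_{\A}$, $b\xi_{\B}\mapsto b^*\xi_{\B}$) satisfy $J_{\A}M_{\A}J_{\A}=M_{\A}'$ and fix $\xi_{\A}$; hence if $x\in M_{\A}\bar{\otimes}M_{\B}$ kills $\xi_{\A}\otimes\xi_{\B}$, then $(J_{\A}\otimes J_{\B})x(J_{\A}\otimes J_{\B})$ lies in $M_{\A}'\bar{\otimes}M_{\B}'$, commutes with $M_{\A}\bar{\otimes}M_{\B}$, and kills the cyclic vector $\xi_{\A}\otimes\xi_{\B}$, so it is zero. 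Your proof is a valid (slightly heavier) version of the same mechanism.
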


We did not find  a reference with an explicit proof for the  following Lemma, so we give our own. In order to learn more about this 
second lemma, especially how to weaken the hypothesis, we refer to \cite{pisier}.

\begin{lemma}
	\label{tensorconv}
	Let $\mathcal{C}$ be an exact $\mathcal{C}^*$-algebra endowed with a faithful state $\tau_{\CC}$, let $Y^N \in \mathcal{A}_N$ be a 
	sequence of family of noncommutative random variables in a $\mathcal{C}^*$-algebra $\mathcal{A}_N$ which converges strongly 
	towards a family $Y$ in a $\mathcal{C}^*$-algebra $\mathcal{A}$ endowed with a faithful state $\tau_{\A}$. Let $S\in \mathcal{C}$ be a 
	family of noncommutative random variables, then the family $(S\otimes 1, 1\otimes Y^N)$ converges strongly in distribution towards the 
	family $(S\otimes 1, 1\otimes Y)$.
\end{lemma}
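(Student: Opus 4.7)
The plan is to split the strong convergence statement into two parts: convergence of the joint distribution, and convergence of the operator norms of all polynomials in the joint family. Since $S\otimes 1$ and $1\otimes Y^N$ live in commuting tensor factors, any monomial in the joint family rewrites as an elementary tensor $s(S)\otimes p(Y^N)$. Using the faithful trace $\tau_{\CC}\otimes_{\min}\tau_{\A_N}$ provided by Lemma \ref{faith}, which factorizes on elementary tensors, convergence in distribution follows at once by linearity from the hypothesis $Y^N\to Y$ in distribution. So the whole difficulty lies in showing, for every non-commutative polynomial $P$, that $\|P(S\otimes 1,1\otimes Y^N)\|_{\min}\to \|P(S\otimes 1,1\otimes Y)\|_{\min}$.

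For the lower bound I would use the trace formula for the norm in a $\CC^*$-algebra with a faithful tracial state: for self-adjoint $b$, $\|b\|=\lim_{k}\tau(b^{2k})^{1/(2k)}$. At any fixed $k$, since $\tau_{\CC}\otimes_{\min}\tau_{\A_N}$ is a faithful trace (Lemma \ref{faith}), one has
\[
\|P(S\otimes 1,1\otimes Y^N)\|^{2k}\ \ge\ (\tau_{\CC}\otimes_{\min}\tau_{\A_N})\bigl((P^*P)(S\otimes 1,1\otimes Y^N)^{k}\bigr),
\]
and the right-hand side converges as $N\to\infty$ to the corresponding trace in the limit family by the distribution convergence already established. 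Taking $\liminf_{N}$, then the $(2k)$-th root, then $k\to\infty$ yields $\liminf_{N}\|P(S\otimes 1,1\otimes Y^N)\|\ge \|P(S\otimes 1,1\otimes Y)\|$.

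The upper bound is the heart of the proof and is where exactness of $\CC$ enters. Fix a free ultrafilter $\mathcal U$ on $\N$ and form the $\CC^*$-algebraic ultraproduct $\mathcal M=\prod_{\mathcal U}\CC^*(Y^N)$. The strong convergence $Y^N\to Y$ implies that the assignment $Y\mapsto [Y^N]_{\mathcal U}$ extends to an isometric $*$-homomorphism $\pi:\CC^*(Y)\hookrightarrow\mathcal M$, since the ultraproduct norm of $Q([Y^N]_{\mathcal U})$ is the $\mathcal U$-limit of $\|Q(Y^N)\|$, which converges to $\|Q(Y)\|$ by hypothesis. By exactness of $\CC$, in Kirchberg's form that $\CC\otimes_{\min}(\cdot)$ preserves injective $*$-homomorphisms, $\mathrm{id}_{\CC}\otimes \pi$ is an isometric embedding $\CC\otimes_{\min}\CC^*(Y)\hookrightarrow \CC\otimes_{\min}\mathcal M$. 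The crucial further fact, also equivalent to exactness of $\CC$ (see \cite{pisier}), is that $\CC\otimes_{\min}(\cdot)$ commutes isometrically with $\CC^*$-ultraproducts, which yields an isometric embedding $\CC\otimes_{\min}\mathcal M\hookrightarrow \prod_{\mathcal U}\bigl(\CC\otimes_{\min}\CC^*(Y^N)\bigr)$. Exactness also lets us identify $\|\cdot\|_{\CC\otimes_{\min}\CC^*(Y^N)}$ with $\|\cdot\|_{\CC\otimes_{\min}\A_N}$ on the relevant elements, and similarly for the limit algebra. Chaining these isometries produces
\[
\|P(S\otimes 1,1\otimes Y)\|_{\min}\ =\ \lim_{\mathcal U}\|P(S\otimes 1,1\otimes Y^N)\|_{\min},
\]
and since this holds along every free ultrafilter, the sequence converges.

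The main obstacle is the invocation of the ultraproduct characterization of exactness in the last step: once this tool, together with Kirchberg's injectivity characterization, is granted, the rest is a formal chaining of isometric embeddings with a standard spectral-radius argument for the easy direction.
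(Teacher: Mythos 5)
Your proof is correct and takes a genuinely parallel, but distinct, route. The paper works with the sequence algebra $\mathcal{M}=\ell^\infty(\A_N)$ and the ideal $\mathcal{I}$ of null sequences, uses exactness to identify $(\CC\otimes_{\min}\mathcal{B})/(\CC\otimes_{\min}\mathcal{I})$ with $\CC\otimes_{\min}(\mathcal{B}/\mathcal{I})$, computes the quotient norm as a $\limsup$, and then runs a separate $\liminf$ argument by testing a positive continuous function against the faithful tensor trace. You replace $c_0$ by $c_{\mathcal U}$ and get a $\CC^*$-ultraproduct; the benefit is that the chain of isometric embeddings immediately gives $\lim_{\mathcal U}\|P\|_N=\|P\|_\infty$ for every free ultrafilter, which already implies convergence of the sequence—so your second (trace-moment) $\liminf$ argument is pleasant but redundant. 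The trace-moment lower bound is in fact a cleaner alternative to the paper's test-function argument and would combine nicely with the paper's $\limsup$.

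Two attributions in your proof are off, though the conclusions you draw are still valid. First, the fact that $\mathrm{id}_\CC\otimes\pi$ is isometric for an injective $*$-homomorphism $\pi$ is not a Kirchberg-type consequence of exactness: the minimal tensor product \emph{always} preserves injective $*$-homomorphisms (equivalently, is consistent on $\CC^*$-subalgebras); this is a defining feature of the spatial norm. Second, the identification of $\|\cdot\|_{\CC\otimes_{\min}\CC^*(Y^N)}$ with $\|\cdot\|_{\CC\otimes_{\min}\A_N}$ is the same general fact about min and subalgebras, not a use of exactness. Exactness is used exactly once, in the genuinely non-trivial step: the isometric embedding $\CC\otimes_{\min}\prod_{\mathcal U}\CC^*(Y^N)\hookrightarrow\prod_{\mathcal U}\bigl(\CC\otimes_{\min}\CC^*(Y^N)\bigr)$. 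That is the ultraproduct analogue of the short-exact-sequence argument in the paper (exactness is what lets one push the tensor through the quotient $\ell^\infty/c_{\mathcal U}$), and it is the one place you should be careful to supply a reference that really proves it; the paper's route via the explicit direct-sum representation and the quotient map $\mathcal{B}\to\mathcal{B}/\mathcal{I}$ has the advantage of being self-contained on that point.
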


\begin{proof}
	The following sets
	
	$$ \mathcal{M} = \left\{ (x_N)_{N\in \N}\ \middle| \ x_N\in\A_N, \sup_{N\geq 0} \norm{ x_N } <\infty \right\} \, $$ 
	
	$$ \mathcal{I} = \left\{ (x_N)_{N\in \N}\in \mathcal{M}\ \middle| \ \lim_{N\to \infty} \norm{ x_N } = 0 \right\} \, $$ 
	
	\noindent are $\mathcal{C}^*$-algebras for the norm $\norm{x} = \sup_{N\geq 0} \norm{ x_N }$. We also define 
	
	$$ \mathcal{B} = \mathcal{C}^*\left( (Y_N)_{N\in \N}\ , \mathcal{I} \right) \,$$
	
	\noindent the $\mathcal{C}^*$-algebra generated by $\mathcal{I}$ and the family $(Y_N)_{N\in \N}$. Since $\mathcal{I}$ is a 
	closed ideal of $\mathcal{B}$, by Theorem 3.1.4	of \cite{murphy}, $\mathcal{B}/\mathcal{I}$ is a $\mathcal{C}^*$-algebra for 
	the quotient norm. We naturally have the following exact sequence
	
	$$ 0 \to \mathcal{I} \to \mathcal{B} \to \mathcal{B}/\mathcal{I} \to 0 \.$$
	
	\noindent And by hypothesis, since $\mathcal{C}$ is exact, we have the following exact sequence
	
	$$ 0 \to \mathcal{C}\otimes_{\min} \mathcal{I} \to \mathcal{C}\otimes_{\min} \mathcal{B} \to \mathcal{C}\otimes_{\min} (\mathcal{B}/\mathcal{I}) \to 0 \. $$
	
	\noindent By definition, this means that $ (\mathcal{C}\otimes_{\min} \mathcal{B})/ (\mathcal{C}\otimes_{\min} \mathcal{I}) \simeq \mathcal{C}\otimes_{\min} (\mathcal{B}/\mathcal{I}) $. If $\pi_{\mathcal{I}}$ is the quotient map from $\mathcal{B}$ to $\mathcal{B}/\mathcal{I}$, the isomorphism between these two spaces is
	
	$$ f: x + \mathcal{C}\otimes_{\min} \mathcal{I} \mapsto \text{id}_{\mathcal{C}}\otimes_{\min} \pi_{\mathcal{I}}(x) \.$$	
	
	\noindent Hence 
	\begin{equation}
	\label{isomor}
	f(P\big( 1\otimes (Y_N)_{N\in \N}, S\otimes 1 \big) + \mathcal{C}\otimes_{\min} \mathcal{I}) = P\big( 1\otimes ((Y_N)_{N\in \N} + \mathcal{I}), S\otimes 1 \big) \.
	\end{equation}
	
	\noindent Let $(H,\varphi)$ be a faithful representation of $\mathcal{C}$, and $(H_N,\varphi_N)$ a faithful representation of 
	$\A_N$. The direct sum $(\bigoplus_{N\in\N} H_N, \bigoplus_{N\in\N} \varphi_N)$ is a faithful representation of $\mathcal{M}$ and 
	consequently of $\mathcal{B}$ too. More precisely, it is defined by
	
	$$ \bigoplus_{N\in\N} H_N = \left\{ (x_N)_{N\in\N}\ \middle|\ x_N\in H_N, \sum_N \norm{x_N}_2^2 <\infty \right\} \. $$
	
	\noindent Consequently, by definition of the spatial tensor product, it is the completion of the algebraic tensor 
	$\mathcal{C} \otimes \mathcal{B}$ in the operator space $B\left(H \otimes_2 (\oplus_{N} H_N)\right)$ endowed with the operator 
	norm. The notation $\otimes_2$ means that we completed the algeraic tensor $H\ \overline{\otimes}\ (\oplus_{N} H_N)$ to make 
	it a Hilbert space. It is important to see that this space is isomorphic to $\oplus_{N} (H \otimes_2 H_N)$, indeed it means that if $P$ 
	is a non-commutative polynomial, then
	
	$$ \norm{P\big( 1\otimes (Y_N)_{N\in \N}, S\otimes 1 \big)}_{\mathcal{C}\otimes_{\min} \mathcal{B}} =\ \sup_{N\geq 0}\ \norm{P\big( 1\otimes Y_N, S\otimes 1 \big)}_{\mathcal{C}\otimes_{\min} \mathcal{\A_N}} \.$$
	
	\noindent Consequently by using the definition of the quotient norm, we have
	\begin{equation}
	\label{normmm}
	\norm{P\big( 1\otimes (Y_N)_{N\in \N}, S\otimes 1 \big) + \mathcal{C}\otimes_{\min} \mathcal{I} }_{(\mathcal{C}\otimes_{\min} \mathcal{B})/ (\mathcal{C}\otimes_{\min} \mathcal{I})} =\ \limsup_{N\to \infty}\ \norm{P\big( 1\otimes Y_N, S\otimes 1 \big)}_{\mathcal{C}\otimes_{\min} \mathcal{\A_N}} \.
	\end{equation}
	
	\noindent Since $f$ is a $\mathcal{C}^*$-algebra isomorphism, thanks to \eqref{isomor}, we have
	$$ \norm{P\big( 1\otimes (Y_N)_{N\in \N}, S\otimes 1 \big) + \mathcal{C}\otimes_{\min} \mathcal{I} }_{(\mathcal{C}\otimes_{\min} \mathcal{B})/ (\mathcal{C}\otimes_{\min} \mathcal{I})} = \norm{P\big( 1\otimes ((Y_N)_{N\in \N} + \mathcal{I}), S\otimes 1 \big)}_{\mathcal{C}\otimes_{\min} (\mathcal{B}/\mathcal{I})} \. $$
	
	\noindent By definition of $\mathcal{I}$, if $P$ is a non-commutative polynomial, we have
	
	$$ \norm{P((Y_N)_{N\in \N} + \mathcal{I})}_{\mathcal{B}/\mathcal{I}} = \norm{P(Y)}_{\A} \.$$
	
	\noindent For our purposes, we can assume that $\A = \mathcal{C}^*(Y)$. Therefore the map 
	
	$$ P((Y_N)_{N\in \N}+ \mathcal{I}) \in \C\langle (Y_N)_{N\in \N}+ \mathcal{I} \rangle \mapsto P(Y) \in \C\langle Y \rangle $$
	
	\noindent is well-defined and is an isometry. Thus since $\C\langle (Y_N)_{N\in \N}+ \mathcal{I} \rangle$ is dense in $\mathcal{B}/\mathcal{I}$ and $\C\langle Y \rangle$ is dense in $\A$, this isometry extends into an isomorphism between $\mathcal{B}/\mathcal{I}$ and $\A$. Consequently 
	
	$$ \norm{P\big( 1\otimes ((Y_N)_{N\in \N} + \mathcal{I}), S\otimes 1 \big)}_{\mathcal{C}\otimes_{\min} (\mathcal{B}/\mathcal{I})} = \norm{P\big( 1\otimes Y, S\otimes 1 \big)}_{\mathcal{C}\otimes_{\min} \A} \.$$
	
	\noindent Thus, combined with \eqref{normmm}, we have
	
	\begin{equation}\label{limsup} \limsup_{N\to \infty}\ \norm{P\big( 1\otimes Y_N, S\otimes 1 \big)}_{\mathcal{C}\otimes_{\min} \mathcal{\A_N}} = \norm{P\big( 1\otimes Y, S\otimes 1 \big)}_{\mathcal{C}\otimes_{\min} \A} \. \end{equation}
	
	\noindent Finally let $f$ be a function which takes value $0$ on $(-\infty,\norm{P( 1\otimes Y, S\otimes 1 )}_{\mathcal{C}\otimes_{\min} \A} - \varepsilon]$ and positive value on $(\norm{P( 1\otimes Y, S\otimes 1 )}_{\mathcal{C}\otimes_{\min} \A} - \varepsilon,\infty)$. Since the family $(S\otimes 1, 1\otimes Y^N)$ converges clearly in distribution towards the family $(S\otimes 1, 1\otimes Y)$, we have
	
	$$ \lim_{N\to \infty} \tau_{\CC}\otimes_{\min}\tau_{\A_N}\Big(f(P(1\otimes Y_N, S\otimes 1))\Big) = \tau_{\CC}\otimes_{\min}\tau_{\A}\Big(f(P(1\otimes Y, S\otimes 1))\Big) \.$$
	
	\noindent Thanks to Lemma \ref{faith}, we know that $\tau_{\CC}\otimes_{\min}\tau_{\A}$ is faithful, consequently 
	
	$$ \tau_{\CC}\otimes_{\min}\tau_{\A}\Big(f(P(1\otimes Y, S\otimes 1))\Big) >0 \. $$
	
	\noindent This means that for $N$ large enough, $\tau_{\CC}\otimes_{\min}\tau_{\A_N}\Big(f(P(1\otimes Y_N, S\otimes 1))\Big)>0$, 
	thus for any $\varepsilon>0$, 
	
	$$ \liminf_{N\to \infty}\ \norm{P\big( 1\otimes Y_N, S\otimes 1 \big)}_{\mathcal{C}\otimes_{\min} \mathcal{\A_N}} \geq \norm{P\big( 1\otimes Y, S\otimes 1 \big)}_{\mathcal{C}\otimes_{\min} \A} - \varepsilon \. $$
This allows to conclude with \eqref{limsup} that
	
	$$ \lim_{N\to \infty}\ \norm{P\big( 1\otimes Y_N, S\otimes 1 \big)}_{\mathcal{C}\otimes_{\min} \mathcal{\A_N}} = \norm{P\big( 1\otimes Y, S\otimes 1 \big)}_{\mathcal{C}\otimes_{\min} \A} \. $$
	
\end{proof}

In order to prove Theorem \ref{strongconv} we use well-known concentration properties of Gaussian random variable coupled with 
an estimation of the expectation, let us begin by stating the concentration properties (see \cite{alice} Lemma 2.3.3).

\begin{prop}
	\label{concentr}
	Let $G$ be a Lipschitz function on $\R^n$ with Lipschitz constant $K$ for the $\ell^{2}$- norm 
	$ \norm{\gamma}_2 = ( \sum_i \gamma_i^2)^{1/2} $, $\gamma=(\gamma_1\etc \gamma_n)$ independent centered Gaussian random 
	variable of variance $1$. Then for all $\delta>0$,
	
	$$ \P\left( G(\gamma) - \E[G(\gamma)] \geq \delta \right) \leq e^{-\frac{\delta^2}{2 K^2}} \. $$
\end{prop}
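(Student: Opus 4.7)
The plan is to derive the tail bound by a Chernoff argument once we establish the sub-Gaussian Laplace transform estimate
$$\E\bigl[e^{t(G(\gamma)-\E[G(\gamma)])}\bigr] \le e^{t^2 K^2/2} \quad \text{for every } t > 0.$$
Indeed, Markov's inequality applied to $e^{t(G(\gamma)-\E[G(\gamma)])}$ then yields $\P(G(\gamma)-\E[G(\gamma)]\ge\delta)\le e^{-t\delta+t^2K^2/2}$, and optimizing at $t=\delta/K^2$ produces exactly $e^{-\delta^2/(2K^2)}$.

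For the Laplace transform estimate, my preferred route is Herbst's argument via the Gaussian logarithmic Sobolev inequality of Gross, which asserts that for every smooth $f\colon\R^n\to\R$,
$$\int f^2 \log f^2 \; d\mu - \Bigl(\int f^2 \; d\mu\Bigr) \log \Bigl(\int f^2 \; d\mu\Bigr) \le 2\int \|\nabla f\|_2^2 \; d\mu,$$
where $\mu$ is the standard Gaussian measure on $\R^n$. Substituting $f=e^{tG/2}$ and writing $H(t)=\E[e^{tG(\gamma)}]$ turns this inequality into the differential inequality
$$tH'(t)-H(t)\log H(t)\le \frac{t^2K^2}{2}\, H(t),$$
in which the factor $K^2$ comes from $\|\nabla G\|_2\le K$ almost everywhere. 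Setting $\psi(t)=t^{-1}\log H(t)$ reduces it to $\psi'(t)\le K^2/2$, and since a first-order Taylor expansion of $H$ gives $\psi(0^+)=\E[G(\gamma)]$, integration yields $\log H(t)\le t\E[G(\gamma)]+t^2K^2/2$, which is equivalent to the desired sub-Gaussian estimate.

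The main technical point I expect to have to deal with is that $G$ is only assumed Lipschitz, so $\nabla G$ exists merely almost everywhere by Rademacher's theorem, while the log-Sobolev inequality is usually stated for smooth $f$. The standard remedy is to approximate $G$ by the convolutions $G_\varepsilon=G*\varphi_\varepsilon$ against a Gaussian mollifier: each $G_\varepsilon$ is smooth and still $K$-Lipschitz, so the argument yields the Laplace bound for $G_\varepsilon$, and one passes to $\varepsilon\downarrow 0$ by dominated convergence, using that $G_\varepsilon\to G$ locally uniformly and that $|G(\gamma)|$ grows at most linearly in $\|\gamma\|_2$. A more elementary alternative, which bypasses log-Sobolev but gives a constant $\pi^2/8$ in place of $1/2$, is to interpolate between $\gamma$ and an independent copy $\gamma'$ via the rotation $\gamma_\theta=\sin\theta\cdot\gamma+\cos\theta\cdot\gamma'$, write $G(\gamma)-G(\gamma')$ as an integral of $\nabla G(\gamma_\theta)\cdot\gamma_\theta'$ over $\theta$, and apply Jensen together with the one-dimensional Gaussian MGF identity; since the statement carries the sharp constant $1/(2K^2)$, only the Herbst route is adequate.
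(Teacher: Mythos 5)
The paper does not prove Proposition \ref{concentr}; it simply cites Lemma 2.3.3 of \cite{alice}, where the inequality is established via exactly the Herbst/log-Sobolev route you take. Your argument is correct: the substitution $f=e^{tG/2}$ into the Gaussian log-Sobolev inequality, together with $\|\nabla G\|_2\le K$ a.e., gives $tH'(t)-H(t)\log H(t)\le \frac{t^2K^2}{2}H(t)$, the change of variables $\psi(t)=t^{-1}\log H(t)$ reduces this to $\psi'\le K^2/2$ with $\psi(0^+)=\E[G(\gamma)]$, and Markov plus the optimization $t=\delta/K^2$ delivers the stated bound with the sharp constant $1/(2K^2)$. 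Your remark on the mollification step for merely Lipschitz $G$ is the right technical caveat, and your observation that the rotation/interpolation argument only gives a worse constant (so is insufficient here) is also accurate. In short: correct proof, same approach as the cited source.
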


\vspace{1cm}

In our situation, we have $p$ independent GUE matrices $(X^{N,i})_s$ of size $N$, hence we fix $\gamma$ the random vector of 
size $dN^2$ which consists of the union of $(\sqrt{N}X^{N,i}_{s,s})_{i,s}$, $(\sqrt{2N}\ \Re{X^{N,i}_{s,r}})_{s<r,i}$ and  
$(\sqrt{2N}\ \Im{X^{N,i}_{s,r}})_{s<r,i}$ which are indeed centered Gaussian random variable of variance $1$ as stated in 
Definition \ref{GUEdef}. We would like to apply Proposition \ref{concentr} to

$$ G_N(\gamma) = \norm{P^*P(X^N\otimes I_M, Z^{NM}, {Z^{NM}}^* )} \.$$
However $G_N$ is not  Lipschitz on $\R^{dN^2}$ because of its polynomial behaviour at infinity. Hence we cannot use directly 
Proposition \ref{concentr}. The following lemma is a well-known tool for this kind of situation, the proof can be found in 
\cite{lemmaconcentration}, Lemma 5.9.

\begin{lemma}
	\label{saintflour}
	Let $(X,d)$ be a metric space and $\mu$ a probability measure on $(X,d)$ which satisfies a concentration inequality, i.e. for 
	all $f:X\to\R$ with Lipschitz constant $|f|_{\mathcal{L}}$, for all $\delta>0$, 
	$$ \mu\Big(|f - \mu(f)| \geq \delta\Big) \leq e^{-g\left(\frac{\delta}{|f|_{\mathcal{L}}}\right)} $$
	
	\noindent for some increasing function $g$ on $\R^+$. Let $B$ be a subset of $X$ and $|f|_{\mathcal{L}}^B$ be the Lipschitz constant 
	of $f$ as a function from $B$ to $\R$. Let $\delta(f) = \mu(\ \1_{x\in B^{c}} ( |f(x)| + \sup_{u\in B} |f(u)| + |f|_{\mathcal{L}}^B d(x,B) )\ )$, 
	then
	$$ \mu\Big( |f - \mu(f)| \geq \delta + \delta(f) \Big) \leq \mu(B^c) + e^{-g\left(\frac{\delta}{|f|_{\mathcal{L}}^B}\right)} \.$$
	
\end{lemma}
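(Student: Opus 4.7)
The plan is to build from $f$ a globally Lipschitz proxy $\tilde f$ that agrees with $f$ on the good set $B$, apply the hypothesized concentration inequality to $\tilde f$, and control both the discrepancy between $\mu(f)$ and $\mu(\tilde f)$ and the deviation event in terms of $\delta(f)$ plus the mass $\mu(B^c)$. The crucial gain is that $\tilde f$ will have Lipschitz constant $|f|_{\mathcal{L}}^B$ rather than some (possibly much worse or non-existent) global constant, so the concentration bound will feature exactly $e^{-g(\delta/|f|_{\mathcal{L}}^B)}$ as claimed.

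Concretely I would take the McShane extension
$$\tilde f(x)=\inf_{u\in B}\bigl\{f(u)+|f|_{\mathcal{L}}^B\, d(x,u)\bigr\}.$$
A direct check shows that $\tilde f=f$ on $B$, that $\tilde f$ is $|f|_{\mathcal{L}}^B$-Lipschitz on all of $X$, and that for every $x\in X$
$$|\tilde f(x)|\ \le\ \sup_{u\in B}|f(u)|+|f|_{\mathcal{L}}^B\, d(x,B),$$
where the upper bound follows by choosing any $u\in B$ as a test point, and the corresponding lower bound follows by the triangle inequality applied to a near-minimizer of $d(x,u)$ over $B$ together with the $B$-Lipschitz property. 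Combining this pointwise bound with $\tilde f=f$ on $B$ and the very definition of $\delta(f)$,
$$\bigl|\mu(\tilde f)-\mu(f)\bigr|\ \le\ \int_{B^c}\bigl(|f(x)|+|\tilde f(x)|\bigr)\,d\mu(x)\ \le\ \delta(f).$$

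Applying the concentration hypothesis to the genuinely Lipschitz function $\tilde f$ gives $\mu(|\tilde f-\mu(\tilde f)|\ge\delta)\le e^{-g(\delta/|f|_{\mathcal{L}}^B)}$. On $B$ we have $f=\tilde f$, so
$$|f-\mu(f)|\ \le\ |\tilde f-\mu(\tilde f)|+|\mu(\tilde f)-\mu(f)|\ \le\ |\tilde f-\mu(\tilde f)|+\delta(f),$$
which means $\{|f-\mu(f)|\ge\delta+\delta(f)\}\cap B\subset \{|\tilde f-\mu(\tilde f)|\ge\delta\}$. Adding the trivial bound $\mu(B^c)$ to cover the complement yields the announced inequality. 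The only delicate technical point is verifying that the McShane formula indeed produces a $|f|_{\mathcal{L}}^B$-Lipschitz extension satisfying the stated pointwise estimate; everything else amounts to the triangle inequality applied to $|f-\mu(f)|$ and to $\mu(f)-\mu(\tilde f)$.
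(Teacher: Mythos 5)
Your proof is correct. The paper itself does not write out a proof of this lemma but simply cites Lemma~5.9 of \cite{lemmaconcentration}; the argument there is precisely the one you give, namely replacing $f$ by a globally $|f|_{\mathcal{L}}^B$-Lipschitz extension that agrees with $f$ on $B$, applying the concentration hypothesis to the extension, and absorbing both the shift of the mean $|\mu(\tilde f)-\mu(f)|\le\delta(f)$ and the event $B^c$ into the error terms. Your verification of the pointwise bound $|\tilde f(x)|\le \sup_{u\in B}|f(u)|+|f|_{\mathcal{L}}^B d(x,B)$, the identity $\tilde f=f$ on $B$, the inclusion $\{|f-\mu(f)|\ge\delta+\delta(f)\}\cap B\subset\{|\tilde f-\mu(\tilde f)|\ge\delta\}$, and the monotonicity of $g$ (so that a possibly smaller Lipschitz constant of $\tilde f$ only improves the bound) are all sound, so this is a complete proof matching the referenced one.
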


We can now prove the concentration inequality that we will use in the rest of this paper. To simplify notations we will write $M$ instead 
of $M_N$. We also set œ$ Z^{NM}=(Z^N\otimes I_M, I_N\otimes Y^M) $ and $Z=(z\otimes 1, 1\otimes y)$.

\begin{prop}
	\label{lips}
	
	Let $P\in \A_{d,p+q}$, there are some polynomials $H_P$ and $K_P$ which only depends on $P$ such that for any $N,M$,
	
	\begin{align*}
		\P\Big( &\left|\ \norm{P^*P(X^N\otimes I_M, Z^{NM},{Z^{NM}}^*)} - \E\left[\norm{P^*P(X^N\otimes I_M, Z^{NM},{Z^{NM}}^*)}\right]\ \right| \\
		&\geq \delta + K_P\left(\norm{Z^{NM}}\right)\ e^{-N} \Big) \leq d\ e^{-2N} + e^{-\frac{\delta^2 N}{H_P\left(\norm{Z^{NM}}\right)}} \.
	\end{align*}
	
\end{prop}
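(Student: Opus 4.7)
The plan is to apply the general concentration lemma \ref{saintflour} to the function
$$ F_N(\gamma) = \norm{P^*P(X^N\otimes I_M, Z^{NM},{Z^{NM}}^*)}, $$
viewed as a function of the vector $\gamma\in\R^{dN^2}$ of independent standard Gaussian entries that, after the usual $\sqrt N$ and $\sqrt{2N}$ rescaling from Definition \ref{GUEdef}, parametrize $X^N$. The natural event on which to localize is
$$ B = \left\{ \gamma : \max_{1\leq i\leq d} \norm{X_i^N}\leq D+1 \right\}, $$
where $D$ is the constant from Proposition \ref{bornenorme}, so that $\P(B^c)\leq d\, e^{-\alpha N}$ with $\alpha$ as in that Proposition. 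Up to replacing $D$ by a larger constant we may assume $\alpha\geq 2$.

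The first step is to show that $F_N$ restricted to $B$ is Lipschitz with constant
$$ |F_N|_{\mathcal L}^B \ \leq\ \frac{1}{\sqrt N}\, R_P(\norm{Z^{NM}}), $$
for some polynomial $R_P$. Writing $\sqrt N X_i^N = (x^i_{r,s})_{r,s}$ and using that the operator norm of a self-adjoint matrix $A(t)$ has derivative bounded by $\norm{A'(t)}$, one differentiates $P^*P$ by the chain rule: each partial derivative $\partial_{x^i_{r,s}}(P^*P)$ is, up to the $1/\sqrt N$ factor, a sum of products of the form $A\, (E_{r,s}\otimes I_M)\, B$, where $A,B$ are monomials in $(X^N\otimes I_M, Z^{NM},{Z^{NM}}^*)$ evaluated on $B$. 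Their operator norms are bounded by a polynomial in $D+1$ and $\norm{Z^{NM}}$, and summing the squared contributions over $r,s,i$ yields the announced bound in $1/\sqrt N$ (the $N^2$ terms in the sum combine with two $1/\sqrt N$ factors coming from each $\norm{E_{r,s}\otimes I_M}$-type contribution in the $\ell^2$ norm of the gradient).

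The second step is to control the correction
$$ \delta(F_N)=\E\left[\1_{B^c}\Big(|F_N(\gamma)| + \sup_B F_N + |F_N|_{\mathcal L}^B\, d(\gamma,B)\Big)\right]. $$
On $B^c$ each of the three terms is a polynomial in $\norm{X^N}$ and $\norm{Z^{NM}}$: for $F_N$ itself this is immediate; for $d(\gamma,B)$ one bounds the distance by $\sqrt N\,\norm{X^N}$. Applying Cauchy--Schwarz, together with Proposition \ref{bornenorme} to control all moments of $\norm{X^N}$ and with $\P(B^c)\leq d\, e^{-\alpha N}$, one obtains $\delta(F_N)\leq K_P(\norm{Z^{NM}})\, e^{-N}$ for some polynomial $K_P$.

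Finally, Proposition \ref{concentr} asserts Gaussian concentration with $g(t)=t^2/2$, so plugging the bounds on $|F_N|_{\mathcal L}^B$, $\P(B^c)$ and $\delta(F_N)$ into Lemma \ref{saintflour} gives
$$ \P\Big(|F_N-\E F_N|\geq \delta+K_P(\norm{Z^{NM}})\, e^{-N}\Big)\ \leq\ d\, e^{-2N} + \exp\!\left(-\frac{\delta^2 N}{2R_P(\norm{Z^{NM}})^2}\right), $$
which is the claim with $H_P:=2 R_P^2$. The only real obstacle is the first step: the Lipschitz estimate on $B$ requires the careful book-keeping of the $\sqrt N$ rescaling of the GUE entries and of the sum over $N^2$ matrix coordinates, since any looser bound would destroy the desired $N$ (rather than $1$) in the exponent of the Gaussian tail.
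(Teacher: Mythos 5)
Your overall strategy is the same as the paper's: apply Lemma \ref{saintflour} together with Proposition \ref{concentr} to $F_N(\gamma)=\norm{P^*P(X^N\otimes I_M, Z^{NM},{Z^{NM}}^*)}$, localize to the event where all $\norm{X^N_i}$ are bounded, show the restricted Lipschitz constant is $O(N^{-1/2})$, and bound the correction $\delta(F_N)$ with Proposition \ref{bornenorme}. Your Steps 2 and 3 are fine.

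However, the reasoning you give in Step 1 does not close, and this is the step you yourself identify as the crux. You bound each partial derivative $\partial_{x^i_{r,s}}(P^*P)$ by a factor $N^{-1/2}$ times a polynomial in the operator norms, and then claim that "summing the squared contributions over $r,s,i$ yields the announced bound in $1/\sqrt N$." It does not: if each of the $dN^2$ partial derivatives is bounded only by $C/\sqrt N$, then $\norm{\nabla F_N}_2\leq C\sqrt{dN^2/N}=C\sqrt{dN}$, which grows with $N$ and is off by a full factor of $N$ from what you need. Your parenthetical explanation is wrong on its face, since $\norm{E_{r,s}\otimes I_M}=1$ and contributes no additional powers of $N$. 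The paper avoids the entrywise estimate entirely: one first uses that the operator norm of a polynomial is Lipschitz in the operator norms of the $X^N_i$, giving
$$ |F_N(\gamma)-F_N(\tilde\gamma)|\ \leq\ H'_P(\norm{Z^{NM}})\ \sum_i\norm{X^N_i-\tilde X^N_i}\,, $$
and then compares operator norm to Hilbert--Schmidt norm,
$$ \sum_i\norm{X^N_i-\tilde X^N_i}\ \leq\ \sum_i \tr_N\big((X^N_i-\tilde X^N_i)^2\big)^{1/2}\ \leq\ \frac{c_d}{\sqrt N}\norm{\gamma-\tilde\gamma}_2\,, $$
where the $N^{-1/2}$ comes from the Gaussian rescaling of the entries entering the Hilbert--Schmidt norm. (A gradient argument can also work, but only if one uses $\partial_{x^i_{r,s}}\lambda_{\max}=v^*(\partial_{x^i_{r,s}}P^*P)v$ with $v$ a unit top eigenvector and exploits the rank-one structure of $E_{r,s}$ to show $\sum_{r,s}|\partial_{x^i_{r,s}}\lambda_{\max}|^2=O(1/N)$, which is strictly sharper than summing the entrywise operator-norm bounds you propose.) You need one of these two arguments; the crude entrywise bound destroys the $N$ in the Gaussian tail rather than producing it.
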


\begin{proof}
	We want to use Lemma \ref{saintflour} and Proposition \ref{concentr}. 
	The metric space we will work with is $\R^n$ endowed with 
	the Euclidian norm, and we can take  the function $g$ to be $g:x\mapsto x^2/2$ by 
	Lemma \ref{concentr}. Thus we get that for any $B\subset R^n$, for any $G:R^n\mapsto \R$, if $\gamma=(\gamma_1\etc \gamma_n)$ 
	is a vector of independent centered Gaussian random variables of variance $1$, then for all $\delta>0$,
	\begin{equation}
		\label{interinter}
		\P\left( G(\gamma) - \E[G(\gamma)] \geq \delta +\delta(G) \right) \leq e^{-\frac{\delta^2}{2 (|G|_{\mathcal{L}}^B)^2}} \.
	\end{equation}
	
	\noindent If $0\in B$ as it will be the case later on, we have $\delta(G) \leq \E[ \1_{\gamma\notin B} ( |G(\gamma)| + \sup_{u\in B} |G(u)| + |f|_{\mathcal{L}}^B \norm{\gamma}_2 ) ] $. We set $ B_N = \left\{ \forall i, \norm{X_i^N}\leq D \right\}$ where $D$ was chosen thanks to \ref{bornenorme} such that for any $N$ and $i$,
	
	\begin{equation}
	\label{boundary}
	\P\left(\norm{X^N_i}\geq D \right) \leq e^{- 2 N} \.
	\end{equation}
	
	\noindent Thus we have $\P(B_N^c)\leq d\ e^{-2N}$. With $\gamma$ the vector of size $dN^2$ which consists of the union of 
	$(\sqrt{N}X^{N,i}_{s,s})_{i,s}$, $(\sqrt{2N}\ \Re{X^{N,i}_{s,r}})_{s<r,i}$ and  $(\sqrt{2N}\ \Im{X^{N,i}_{s,r}})_{s<r,i}$, we set 
	$G_N(\gamma) = \norm{P^*P(X^N\otimes I_M, Z^{NM},{Z^{NM}}^*)}$. One can see that on $B_N$ we can find a polynomial 
	$H'_P$ such that for any $N$ and $Z^{NM}$,
	
	$$ |G_N(\gamma) - G_N(\tilde{\gamma})| \leq H'_P\left( \norm{Z^{NM}} \right) \sum_i \norm{X^N_i - \tilde{X}^N_i } \, $$
	
	\noindent where $\norm{.}$ is the operator norm. Besides
	
	\begin{align*}
	\sum_i \norm{X^N_{i} - \tilde{X}^N_{i}} &\leq \sum_i \tr_N\left( (X^N_{i} - \tilde{X}^N_{i})^* (X^N - \tilde{X}^N_{i}) \right)^{1/2}
	\leq \frac{2^d}{\sqrt{N}} \norm{\gamma-\tilde{\gamma}}_2 \.
	\end{align*}
	
	\noindent Thus, on $B_N$, $G_N$ has Lipschitz constant $2^d H'_P\left( \norm{Z^{NM}} \right) N^{-1/2} $. Consequently with \eqref{interinter}, we get that
	\begin{equation*}
	\P\left( G_N(\gamma) - \E[G_N(\gamma)] \geq \delta +\delta(G_N) \right) \leq e^{-\frac{\delta^2 N}{2^{d+1} H'_P\left( \norm{Z^{NM}} \right)^2}} \.
	\end{equation*}
	
	\noindent Therefore, we set $H_P = 2^{d+1} H'_P$, we also have that $ \norm{\gamma}_2^2 = N \sum_i \tr_N((X_i^N)^2)$. Consequently we have some polynomial $K_P'$ such that,
	$$\delta(G) \leq \E\left[ \1_{\left\{ \exists i, \norm{X_i^N}> D \right\}} \left( |G_N(\gamma)| + K_P'(\norm{Z^{NM}}) + 2^d H'_P\left( \norm{Z^{NM}} \right) N^{1/2} \sqrt{\sum_i \norm{X_i^N}^2} \right) \right] $$
	
	\noindent Hence the conclusion thanks to Proposition \ref{bornenorme} and our choice of $D$ in equation \eqref{boundary}.
	
\end{proof}

We   can now prove Theorem \ref{strongconv}. Firstly, we can assume that $Z^{N}$ and $Y^M$ are deterministic matrices by Fubini's 
Theorem. The convergence in distribution is a well-known theorem, we refer to \cite{alice}, Theorem 5.4.5. We set $g$ a 
$\mathcal{C}^{\infty}$ function which takes value $0$ on $(-\infty,1/2]$ and value $1$ on $[1,\infty)$, and belongs  to $[0,1]$ otherwise. 
Let us define $f_{\varepsilon}:t\mapsto g(\varepsilon^{-1} (t - \norm{PP^*(x\otimes 1, Z,Z^*)}))$. By Theorem \ref{imp1}, 
 there exists a constant $C$ which only depends on $P$, $\sup_M \norm{Y^M}$ and $\sup_N \norm{Z^{N}}$ (which is finite thanks to 
the strong convergence assumption on $Z^{N}$) such that,

\begin{align*}
\Bigg| &\E\left[\tr_{MN}\Big(f_{\varepsilon}\left(PP^*\left(X^N\otimes I_M,Z^{NM},{Z^{NM}}^*\right)\right)\Big)\right] - MN \tau_N\otimes\tau_M\Big(f_{\varepsilon}\left(PP^*\left(x\otimes I_M,Z^{NM},{Z^{NM}}^*\right)\right)\Big) \Bigg| \\
&\leq C \varepsilon^{-6} \frac{M^3}{N} \.
\end{align*}
	
\noindent According to Theorem A.1 from \cite{male}, $(x,Z^{N})_{N\geq 1}$ converges strongly in distribution towards $(x,z)$. 
Besides thanks to Lemma \ref{tensorconv} and Corollary 17.10 from \cite{exact}, we have that $(x\otimes I_{M},1\otimes Y^{M})_{M\geq 1}$ 
converges strongly in distribution towards $(x\otimes 1, 1\otimes y)$. In Theorem \ref{strongconv}, we are interested in the situation where 
$Z^{NM} = Z^N\otimes I_M$ or $Z^{NM} = I_N\otimes Y^M$. So, without loss of generality, we restrict ourselves to this kind of $Z^{NM}$. 
We know that $(x\otimes I_M, Z^{NM})$ converges strongly towards $(x\otimes 1, Z)$, but since the support of 
$f_{\varepsilon}$ is disjoint from the spectrum of $PP^*(x\otimes 1,Z,Z^*)$, thanks to Proposition \ref{hausdorff}, for $N$ large enough, 
$\tau_N\otimes\tau_{M}\Big(f_{\varepsilon}(PP^*(x\otimes I_M, Z^{NM},{Z^{NM}}^*))\Big) = 0$ and therefore,
	
\begin{equation}
\label{Neps}
	\E\left[\tr_{MN}\Big(f_{\varepsilon}\left(PP^*\left(X^N\otimes I_M,Z^{NM},{Z^{NM}}^*\right)\right)\Big)\right] \leq C \varepsilon^{-6} \frac{M^3}{N} \.
\end{equation}

\noindent  Hence, using Proposition \ref{bornenorme}, we deduce for $N$ large enough,
	
\begin{align*}
	&\E\left[ \norm{PP^*(X^N\otimes I_M, Z^{NM},{Z^{NM}}^*)} \right] - \norm{PP^*(x\otimes I_M, Z,Z^*)} \\
	&\leq \varepsilon + \int_{\varepsilon}^{\infty}\P\left( \norm{PP^*(X^N\otimes I_M, Z^{NM},{Z^{NM}}^*)} \geq \norm{PP^*(x\otimes I_M, Z,Z^*)} + \alpha \right)\  d\alpha \\
	&\leq \varepsilon + \int_{\varepsilon}^{K} \P\left( \tr_{NM}\left(f_{\alpha}(P(X^N\otimes I_M, Z^{NM},{Z^{NM}}^*))\right) \geq 1 \right)\  d\alpha + C e^{-N}\\
	&\leq \varepsilon + C'\varepsilon^{-5} \frac{M^3}{N} \.
\end{align*}
	
\noindent Finally we get that,

$$ \limsup_{N\to \infty}\ \E\left[ \norm{PP^*(X^N\otimes I_M, Z^{NM},{Z^{NM}}^*)} \right] \leq \norm{PP^*(x\otimes I_M, Z,Z^*)} \.$$
	
Besides, we know thanks to Theorem 5.4.5 of \cite{alice} that if $h$ is a continuous function taking positive values on 
$\left(\norm{PP^*(x\otimes 1, Z,Z^*)}-\varepsilon, \infty \right)$ and taking value $0$ elsewhere. Then 
$\frac{1}{MN}\tr_{MN}(h(PP^*(X^N\otimes I_M, Z,Z^*)))$ converges almost surely
towards $\tau_{\A}\otimes_{\min}\tau_{\B} (h(PP^*(x\otimes 1, Z,Z^*)))$. 
If this quantity is positive for any $h$, then for any $\varepsilon>0$, for $N$ large enough,

$$ \norm{PP^*(X^N\otimes I_M, Z^{NM},{Z^{NM}}^*)} \geq \norm{PP^*(x\otimes 1, Z,Z^*)} - \varepsilon \.$$

\noindent Since $h$ is non-negative and the intersection of the support of $h$ with the spectrum of $PP^*(x\otimes 1, Z,Z^*)$ is non-empty, 
we have that $h(PP^*(x\otimes 1, Z,Z^*)) \geq 0$ and is not $0$. 
Besides, we know that the trace on the space where $z$ is defined is faithful, 
and so is the trace on the $\mathcal{C}^*$-algebra generated by a single semicircular variable, hence by Theorem \ref{freesum}, so is 
$\tau_{\A}$. Thus, since both $\tau_{\A}$ and $\tau_{\B}$ are faithful, by Lemma \ref{faith}, so is $\tau_{\A}\otimes_{\min}\tau_{\B}$ and 
$\tau_{\A}\otimes_{\min}\tau_{\B} (h(PP^*(x\otimes 1, Z,Z^*)))>0$. As a consequence,  almost surely,

\begin{equation}
	\label{liminfps}
	\liminf_{N\to \infty} \norm{P(X^N\otimes I_M, Z^{NM},{Z^{NM}}^*)} \geq \norm{P(x\otimes 1, Z,Z^*)} \.
\end{equation}

\noindent Thanks to Fatou's Lemma, we deduce
$$ \liminf_{N\to \infty}\ \E\left[ \norm{PP^*(X^N\otimes I_M, Z^{NM},{Z^{NM}}^*)} \right] \geq \norm{PP^*(x\otimes I_M, Z,Z^*)} \. $$
	
\noindent We conclude that
\begin{equation}
	\label{limes}
	\lim_{N\to \infty}\ \E\left[ \norm{PP^*(X^N\otimes I_M, Z^{NM},{Z^{NM}}^*)} \right] = \norm{PP^*(x\otimes I_M, Z,Z^*)} \.
\end{equation}

\noindent Let us define the following objects,
$$\varepsilon_N = \left|\E\left[ \norm{PP^*(X^N\otimes I_M, Z^{NM},{Z^{NM}}^*)} \right] - \norm{PP^*(x\otimes I_M, Z,Z^*)}\right| \,$$
$$K = \sup_{N,M\geq 0} K_P\left(\norm{Z^{NM}}\right) + H_P\left(\norm{Z^{NM}}\right) \. $$ 

\noindent $K$ is finite thanks to the strong convergence of the families $Z^N$ and $Y^M$. Then thanks to Proposition \ref{lips}, we have 
that for any $\delta>0$,
\begin{align*}
\P\Big( &\left|\ \norm{P^*P(X^N\otimes I_M, Z^{NM},{Z^{NM}}^*)} - \norm{PP^*(x\otimes I_M, Z,Z^*)}\ \right| \geq \delta + K e^{-N} + \varepsilon_N \Big) \leq d\ e^{-2N} + e^{-\frac{\delta^2 N}{K}} \.
\end{align*}

\noindent Since this is true for any $\delta>0$, by Borel-Cantelli's Lemma, almost surely,
\begin{equation*}
\lim_{N\to \infty} \norm{PP^*(X^N\otimes I_M, Z^{NM},{Z^{NM}}^*)} = \norm{PP^*(x\otimes 1, Z,Z^*)} \.
\end{equation*}
We finally conclude  thanks to the fact that for any $y$ in a $\CC^*$-algebra, $\norm{yy^*} = \norm{y}^2$.

\subsection{Proof of Theorem \ref{concentration}}
We first prove the following estimate that we use multiple times during the proofs.

\begin{lemma}
	\label{meilleurestime}
		
	Let $g$ be a $\mathcal{C}^{\infty}$ function which takes value $0$ on $(-\infty,1/2]$ and value $1$ on $[1,\infty)$, and in $[0,1]$ 
	otherwise. We set $f_{\varepsilon}:t\mapsto g(\varepsilon^{-1} (t - \alpha))$ with $\alpha = \norm{PP^*(x\otimes I_M, 1\otimes Y^M)}$, 
	then there exists a constant $C$ such that for any $\varepsilon>0$ and $N$,
	
	\begin{equation*}
	\E\left[\frac{1}{MN}\tr_{NM}\Big(f_{\varepsilon}(PP^*(X^N\otimes I_M, I_N\otimes Y^M))\Big)\right] \leq C \frac{\varepsilon^{-4}}{N^2}  \.
	\end{equation*}
	
\end{lemma}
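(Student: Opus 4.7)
The plan is to apply Theorem~\ref{imp2} to $f_\varepsilon$ (after a compactly supported truncation), exploiting the fact that its support is disjoint from the spectrum of $PP^*$ in the free model. Since $PP^*(x\otimes I_M, 1\otimes Y^M)$ is positive with operator norm $\alpha$, its spectrum is contained in $[0,\alpha]$, while $f_\varepsilon$ vanishes on $(-\infty,\alpha+\varepsilon/2]$. Consequently $\tau_N\otimes\tau_M\bigl(f_\varepsilon(PP^*(x\otimes I_M, 1\otimes Y^M))\bigr)=0$, so the quantity to bound coincides with the difference estimated by Theorem~\ref{imp2}.

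Since $f_\varepsilon$ is not integrable (it equals $1$ at infinity), it cannot be Fourier inverted directly, so I would first localize. By Proposition~\ref{bornenorme} there is a constant $D$ such that $\mathbb{P}(\exists i,\,\|X_i^N\|>D+1)=O(e^{-cN})$ for some $c>0$; on the complementary event $\|PP^*(X^N\otimes I_M, I_N\otimes Y^M)\|\leq K'$ for a constant $K'$ depending only on $P$ and $\|Y^M\|$. Let $\tilde f_\varepsilon$ be a $\mathcal{C}^\infty$ function equal to $f_\varepsilon$ on $[0,K'+1]$ and supported in $[-1,K'+2]$. Then $f_\varepsilon(PP^*)=\tilde f_\varepsilon(PP^*)$ on the good event, the contribution of the bad event to the expected trace is $O(e^{-cN})$, and the free-trace identity above still holds with $\tilde f_\varepsilon$ in place of $f_\varepsilon$ since their supports coincide near $\alpha$.

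Applying Theorem~\ref{imp2} to $\tilde f_\varepsilon$ with $d\mu(y)=\widehat{\tilde f_\varepsilon}(y)\,dy$, and using that $M$ is a fixed integer (so the $M^2/N^2$ factor becomes $O(1/N^2)$), it suffices to show $\int_\R(|y|+y^4)|\widehat{\tilde f_\varepsilon}(y)|\,dy=O(\varepsilon^{-4})$. The key observation is that $\tilde f_\varepsilon$ has bounded total variation (it rises once from $0$ to $1$ near $\alpha$ and falls back once near $K'+1$), so $\|\tilde f_\varepsilon'\|_{L^1}=O(1)$; more generally, because the transition near $\alpha$ is concentrated on an interval of width $\sim\varepsilon$ where the $k$-th derivative has amplitude $\varepsilon^{-k}$, one obtains $\|\tilde f_\varepsilon^{(k)}\|_{L^1}=O(\varepsilon^{-k+1})$ for every $k\geq 1$. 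Integration by parts then gives $|y|^k|\widehat{\tilde f_\varepsilon}(y)|\leq C\varepsilon^{-k+1}$ for each $k\geq 1$. Splitting $\int y^4|\widehat{\tilde f_\varepsilon}|\,dy$ at $|y|=\varepsilon^{-1}$, using $|y|\,|\widehat{\tilde f_\varepsilon}|\leq C$ on $[1,\varepsilon^{-1}]$ (whence $\int_1^{1/\varepsilon}y^3\,dy=O(\varepsilon^{-4})$) and $|y|^6|\widehat{\tilde f_\varepsilon}|\leq C\varepsilon^{-5}$ on $[\varepsilon^{-1},\infty)$ (whence $\varepsilon^{-5}\int_{1/\varepsilon}^\infty y^{-2}\,dy=O(\varepsilon^{-4})$) produces the desired bound; the $|y|$-piece is treated analogously and is $O(\varepsilon^{-1})$.

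The main obstacle is this Fourier estimate: applying Theorem~\ref{imp1} directly to $f_\varepsilon$ would only yield $\varepsilon^{-6}/N^2$, since $\|f_\varepsilon\|_{\mathcal{C}^6}\sim\varepsilon^{-6}$. The improvement to $\varepsilon^{-4}$ requires routing through the sharper Theorem~\ref{imp2} and exploiting the fact that $f_\varepsilon$ is essentially a smoothed step of bounded total variation, so that the low-frequency part of $\widehat{\tilde f_\varepsilon}$ already satisfies $|y|\,|\widehat{\tilde f_\varepsilon}(y)|=O(1)$ rather than merely $|y|^6|\widehat{\tilde f_\varepsilon}(y)|=O(\varepsilon^{-5})$.
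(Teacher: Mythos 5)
Your proof is correct and follows essentially the same route as the paper: you apply Theorem~\ref{imp2} to a compactly supported modification of $f_\varepsilon$, observe that the free trace $\tau_N\otimes\tau_M\bigl(f_\varepsilon(PP^*(x\otimes I_M,1\otimes Y^M))\bigr)$ vanishes because $f_\varepsilon$ vanishes on the spectrum of $PP^*(x\otimes I_M,1\otimes Y^M)$, and then reduce the problem to showing $\int(|y|+y^4)|\widehat{\,\cdot\,}(y)|\,dy=O(\varepsilon^{-4})$, which you rightly identify as the step requiring Theorem~\ref{imp2} rather than the cruder Theorem~\ref{imp1}. The only differences are technical and both are valid: for the truncation, the paper multiplies by a second cutoff $g(\varepsilon^{-1}(\kappa-t)+1)$ at a \emph{movable} location $\kappa$, proves the bound uniformly in $\kappa$, and sends $\kappa\to\infty$ by monotone convergence, whereas you cut off once and for all at a scale $K'$ determined by Proposition~\ref{bornenorme} and absorb the bad event into an $O(e^{-cN})$ remainder (which is in fact the same device the paper uses in the proof of Theorem~\ref{imp1}). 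For the Fourier estimate, the paper rescales the compactly supported function by $\varepsilon$ so that both transition layers have unit width, turning the bound into a change of variables plus a uniform $L^1$ estimate on a fixed set of derivatives; you instead keep the two transitions at their native scales, prove $\|\tilde f_\varepsilon^{(k)}\|_{L^1}=O(\varepsilon^{1-k})$, and split the $y$-integral at $|y|=\varepsilon^{-1}$. The two Fourier arguments give the same $\varepsilon^{-4}$, and your observation that the bounded-total-variation structure ($\|\tilde f_\varepsilon'\|_{L^1}=O(1)$) is what lets the low-frequency piece $|y|\le\varepsilon^{-1}$ contribute only $O(\varepsilon^{-4})$ rather than $\varepsilon^{-6}$ is exactly the right intuition.
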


\begin{proof}
	
To estimate the above  expectation we use the same method as in the proof of Theorem \ref{strongconv} with a few refinements to have 
an optimal estimate with respect to $\varepsilon$. 
Let $g$ be a $\mathcal{C}^{\infty}$ function which takes value $0$ on 
$(-\infty,1/2]$, $1$ on $[1,\infty)$, and belongs to $[0,1]$ otherwise. We then set 
$f^{\kappa}_{\varepsilon}:t\mapsto g(\varepsilon^{-1} (t - \alpha)) g(\varepsilon^{-1} (\kappa -t)+1)$ with 
$\alpha = \norm{PP^*(x\otimes I_M, 1\otimes Y_M)}$ and $\kappa>\alpha $. Since $g$ has compact support and is sufficiently smooth 
we can apply Theorem \ref{imp2}. Setting  
$h:t\mapsto g(t - \varepsilon^{-1}\alpha) g( \varepsilon^{-1}\kappa +1  -t)=\hat{f^{\kappa}_{\varepsilon}}(\varepsilon t)$, we have

\begin{align*}
	2\pi \int y^4 |\hat{f^{\kappa}_{\varepsilon}}(y)|\ dy &= \int y^4 \left|\int g(\varepsilon^{-1} (t - \alpha)) g(\varepsilon^{-1} (\kappa -t)+1) e^{-\i y t}\ dt \right|\ dy \\
	&= \int y^4 \left|\int h(t) e^{-\i y \varepsilon t}\ \varepsilon dt \right|\ dy \\
	&= \varepsilon^{-4} \int y^4 \left|\int h(t) e^{-\i y t}\ dt \right|\ dy \\
	&\leq \varepsilon^{-4} \int \frac{1}{1+y^2}\ dy \int ( |h^{(4)}(t)| + |h^{(6)}(t)|)\ dt \.
\end{align*}

\noindent The derivatives $h^{(4)}$ and $h^{(6)}$ are uniformly bounded independently of $t$ or $\varepsilon$. Since the support 
of these functions is included in $[\varepsilon^{-1}\alpha,\varepsilon^{-1}\alpha+1]\cup [\varepsilon^{-1}\kappa,\varepsilon^{-1}\kappa +1]$, 
there is a universal constant $C$ such that for any $\varepsilon$ and $\kappa$, 

$$ \int y^4 |\hat{f^\kappa_{\varepsilon}}(y)|\ dy \leq C \varepsilon^{-4} \.$$

\noindent With similar computations we can find a constant $C$ such that for any $\varepsilon$ and $\kappa$, 

\begin{equation}
	\label{eqfourr}
	\int (|y|+y^4) |\hat{f^{\kappa}_{\varepsilon}}(y)|\ dy \leq C \varepsilon^{-4} \.
\end{equation}

\noindent Since the support of $f^{\kappa}_{\varepsilon}$ is disjoint from the spectrum of $PP^*(x\otimes I_M, 1\otimes Y^M)$, for any 
$\varepsilon$ and $N$ one have $\tau\otimes\tau_M\Big(f^{\kappa}_{\varepsilon}(PP^*(x\otimes I_M, 1\otimes Y^M))\Big) = 0$. 
\noindent Consequently thanks to Theorem \ref{imp2}, we have a constant $C$ such that for any $N$, $\varepsilon$ and $\kappa$,

\begin{equation*}
\E\left[\frac{1}{MN}\tr_{NM}\Big(f^{\kappa}_{\varepsilon}(PP^*(X^N\otimes I_M, I_N\otimes Y^M))\Big)\right] \leq C \frac{\varepsilon^{-4}}{N^2}  \.
\end{equation*}

\noindent We define $f_{\varepsilon}:t\mapsto g(\varepsilon^{-1} (t - \alpha))$, then by the monotone convergence theorem, we deduce

\begin{align*}
\E&\left[\frac{1}{MN}\tr_{NM}\Big(f_{\varepsilon}(PP^*(X^N\otimes I_M, I_N\otimes Y^M))\Big)\right] \\
&= \lim_{\kappa\to \infty} \E\left[\frac{1}{MN}\tr_{NM}\Big(f^{\kappa}_{\varepsilon}(PP^*(X^N\otimes I_M, I_N\otimes Y^M))\Big)\right] \.
\end{align*}

\noindent Hence we have a constant $C$ such that for any $N$ and $\varepsilon>0$,

\begin{equation*}
\E\left[\frac{1}{MN}\tr_{NM}\Big(f_{\varepsilon}(PP^*(X^N\otimes I_M, I_N\otimes Y^M))\Big)\right] \leq C \frac{\varepsilon^{-4}}{N^2}  \.
\end{equation*}

\end{proof}

We finally complete  the proof  of Theorem \ref{concentration}. One can view $X^N = (X^N_1\etc X^N_d)$ as the random vector of 
size $dN^2$ which consists of the union of $(\sqrt{N}X^{N,i}_{s,s})_{i,s}$, $(\sqrt{2N}\ \Re{X^{N,i}_{s,r}})_{s<r,i}$ and  
$(\sqrt{2N}\ \Im{X^{N,i}_{s,r}})_{s<r,i}$ which are indeed centered Gaussian random variable of variance $1$ as stated in 
Definition \ref{GUEdef}. Thus we can apply the Poincar\'e inequality (see Proposition \ref{Poinc}) to the function

$$\varphi : X^N \mapsto \frac{1}{MN}\tr_{MN}\left(f_{\varepsilon}(PP^*(X^N\otimes I_M, I_N\otimes Y^M)) \right) \,$$

\noindent and we get

$$\var \left(\varphi\right)\le \frac{1}{(MN)^{2}}\mathbb E[\|\nabla \varphi\|_{2}^{2}]$$

\noindent Besides, as in the proof  of Lemma \ref{grcalc}, if $Q\in \A_{d,p+q}$,

\begin{align*}
&N \norm{\nabla \tr_{MN}\left(Q(X^N\otimes I_M, I_N\otimes Y^M)\right)}_2^2 \\
=& \sum_s \sum_{i,j} \tr_{MN}\Big( D_s Q \ E_{i,j}\otimes I_M \Big) \tr_{MN}\Big( D_s Q \ E_{i,j}\otimes I_M \Big)^* \. \\
\end{align*}

Besides, if $f_k$ is a polynomial with a single variable, then $ D_s f_k(PP^*) = \partial_s (PP^*) \widetilde{\#} f_k'(PP^*) $. Thus, taking 
$f_k$ such that $f_k'$ converges towards $f_{\varepsilon}'$ for the sup norm on the spectrum of $PP^*(X^N\otimes I_M, I_N\otimes Y^M)$, 
we deduce that

$$\var\left(\varphi\right)\leq \frac{1}{M^2 N^3} \sum_s \E\left[ \sum_{i,j} \tr_{MN}\Big( \partial_s (PP^*) \widetilde{\#} f_{\varepsilon}'(PP^*) \ E_{i,j}\otimes I_M \Big) \tr_{MN}\Big( \partial_s (PP^*) \widetilde{\#} f_{\varepsilon}'(PP^*) \ E_{i,j}\otimes I_M \Big)^*  \right] \.
$$

\noindent Now with $A = \partial_s (PP^*) \widetilde{\#} f_{\varepsilon}'(PP^*)$,

\begin{align*}
\sum_{i,j} \tr_{MN}\Big( A \ E_{i,j}\otimes I_M \Big) \tr_{MN}\Big( A \ E_{i,j}\otimes I_M \Big)^* &= \sum_{i,j,k,l} g_j^*\otimes e_k^* A g_i\otimes e_k\ g_i^*\otimes e_l^* A^* g_j\otimes f_l \\
&= \sum_{j,k,l} g_j^*\ (I_N\otimes e_k^*\ A\ I_N\otimes e_k\ I_N\otimes e_l^*\ A^*\ I_N\otimes e_l)\ g_j \\ 
&= \tr_{N} \left( I_N\otimes\tr_{M}(A)\ I_N\otimes\tr_{M}(A^*)\right) \\
&= \tr_{N} \left( I_N\otimes\tr_{M}(A)\ (I_N\otimes\tr_{M}(A))^*\right) \.
\end{align*}

\noindent So by contractivity of the conditional expectation over $\M_N(\C)\otimes I_M$, that is $I_N\otimes \frac{1}{M}\tr_{M}$, we have

\begin{align*}
\sum_{i,j} \tr_{MN}\Big( A \ E_{i,j}\otimes I_M \Big) \tr_{MN}\Big( A \ E_{i,j}\otimes I_M \Big)^* \leq \tr_{MN}(AA^*)\ M \.
\end{align*}

\noindent As a consequence, we find that

$$\var\left(\varphi\right)\leq \frac{1}{N^3 M} \sum_s \E\left[ \tr_{MN}\left( \partial_s (PP^*) \widetilde{\#} f_{\varepsilon}'(PP^*)\ (\partial_s (PP^*) \widetilde{\#} f_{\varepsilon}'(PP^*))^* \right) \right] \.
$$

\noindent Besides, if $U,V$ and $W$ are monomials,

\begin{align*}
\left|\tr_{MN}(Uf_{\varepsilon}'(PP^*)Vf_{\varepsilon}'(PP^*)W) \right| &\leq \sqrt{\tr_{MN}(U{f_{\varepsilon}'}^2(PP^*)U^*)\ \tr_{MN}(Vf_{\varepsilon}'(PP^*)WW^*f_{\varepsilon}'(PP^*)V^*)} \\
&\leq \tr_{MN}({f_{\varepsilon}'}^2(PP^*)) \norm{U} \norm{V} \norm{W} \.
\end{align*}
Therefore there is a constant $C$ depending only on $P$ and $\sup_i \norm{Y^M_i}$ such that

$$\var\left(\varphi\right) \\
\leq \frac{C}{N^2} \E\left[ \prod_s \left( \norm{X_s^N}^{2\deg P} +1 \right) \frac{1}{M N}\tr_{NM}\left( \left| f_{\varepsilon}'(PP^*(X^N\otimes I_M, I_N\otimes Y^M)) \right|^2 \right)\right] \.
$$

\noindent Thanks to Proposition \ref{bornenorme}, we can find $w$ and $\alpha$ such that for any $s$ and $u\geq 0$,

$$ \P\left(\norm{X^N_s}\geq w+u \right) \leq e^{-\alpha u N} \.$$

\noindent There is a constant $C$ independent of $N$ and $\varepsilon$ such that

\begin{equation}
\label{newvar}
\var\left(\varphi\right)\le 
 \frac{C}{N^2} \left( \E\left[ \frac{1}{M N}\tr_{NM}\left( (f_{\varepsilon}')^2(PP^*(X^N\otimes I_M, I_N\otimes Y^M)) \right)\right] + \varepsilon^{-2} e^{- N} \right) \. \nonumber
\end{equation}
We can now apply Theorem  \ref{imp2} to the right hand side of the above equation, noticing that the \eqref{eqfourr} still  holds 
if we replace $f^{\kappa}_{\varepsilon}$ by $(\varepsilon f_{\varepsilon}')^2$. As a consequence, we find an inequality similar the one of 
Lemma \ref{meilleurestime} and thus a constant $C$ such that for any $N$ or $\varepsilon$,

$$ \var\left(\frac{1}{MN}\tr_{NM}(f_{\varepsilon}(PP^*(X^N\otimes I_M, I_N\otimes Y^M)))\right) \leq C\left(\frac{\varepsilon^{-6}}{N^4} + \varepsilon^{-2} e^{- N}\right) \. $$
Therefore, thanks to Lemma \ref{meilleurestime} there exists a constant $C$ such that for any $N\in \N$ and $\varepsilon$ such that 
$\varepsilon^4 > C\frac{M}{N}$,

\begin{align*}
&\P\left( \norm{PP^*(X^N\otimes I_M, I_N\otimes Y^M)} \geq \norm{PP^*(x\otimes I_M, 1\otimes Y^M)} + \varepsilon \right) \\
&\leq \P\left( \frac{1}{MN}\tr_{NM}\left(f_{\varepsilon}(PP^*(X^N\otimes I_M, I_N\otimes Y^M))\right) \geq \frac{1}{MN} \right) \\
&\leq \P\left( \left| \frac{1}{MN}\tr_{NM}\left(f_{\varepsilon}(PP^*)\right) - \E\left[ \frac{1}{MN}\tr_{NM}\left(f_{\varepsilon}(PP^*)\right) \right] \right| \geq \frac{1}{MN} - \frac{C}{N^{2}\varepsilon^4} \right) \\
&\leq C \left(\frac{\varepsilon^{-6}}{N^4} + \varepsilon^{-2} e^{- N}\right) \left( \frac{1}{MN} - \frac{C}{N^{2}\varepsilon^4}  \right)^{-2} \.
\end{align*}

\noindent Let us now set $s = c N^{-1/4}$ with $c$ a constant such that for any $N$,

$$ \frac{1}{MN} - \frac{C}{N^{2}s^4} \geq \frac{1}{2MN} \.$$

\noindent Therefore,  if $x_+ = \max(x,0)$, we have for some constant $C$,

\begin{align*}
& \E\left[ \left(\norm{PP^*(X^N\otimes I_M, I_N\otimes Y^M)} - \norm{PP^*(x\otimes I_M, 1\otimes Y^M)}\right)_+ \right] \\
&= \int_{\R^+} \P\left( \norm{PP^*(X^N\otimes I_M, I_N\otimes Y^M)} \geq \norm{PP^*(x\otimes I_M, 1\otimes Y^M)} + \varepsilon \right)\ d\varepsilon \\
&\leq s + 4C M^2 N^{2} \int_{s}^{\infty} \frac{\varepsilon^{-6}}{N^4} + \varepsilon^{-2} e^{- N} d\varepsilon \\
&\leq s + 4 C M^2 N^{2} (s^{-5}N^{-4} + s^{-1} e^{- N}) \\
&\leq C N^{-1/4} \.
\end{align*}

\noindent On one side, we have

\begin{align*}
&\P\left( \norm{PP^*(X^N\otimes I_M, I_N\otimes Y^M)} - \E\left[\norm{PP^*(X^N\otimes I_M, I_N\otimes Y^M)}\right] \geq \delta + K_P\left(\norm{Y^M}\right)\ e^{-N} \right)	\\
&\geq \P\Big( \norm{PP^*(X^N\otimes I_M, I_N\otimes Y^M)} - \norm{PP^*(x\otimes I_M, 1\otimes Y^M)} \\
&\quad\quad\quad\quad \geq \delta + K_P(\norm{Y^M}) e^{-N} + \E\left[ \left(\norm{PP^*(X^N\otimes I_M, I_N\otimes Y^M)} - \norm{PP^*(x\otimes I_M, 1\otimes Y^M)}\right)_+ \right] \Big)	\\
&\geq \P\left( \left| \norm{P(X^N\otimes I_M, I_N\otimes Y^M)} - \norm{P(x\otimes I_M, 1\otimes Y^M)} \right| \geq \frac{ \delta + C N^{-1/4}}{\norm{P(x\otimes I_M, 1\otimes Y^M)}} \right) \.	\\
\end{align*}

\noindent On the other side, thanks to Proposition \ref{lips}, we have

\begin{align*}
&\P\left( \norm{PP^*(X^N\otimes I_M, I_N\otimes Y^M)} - \E\left[\norm{PP^*(X^N\otimes I_M, I_N\otimes Y^M)}\right] \geq \delta + K_P\left(\norm{Y^M}\right)\ e^{-N} \right)	\\
&\leq e^{-\frac{\delta^2}{H_P(\norm{Y^M})} N} + d e^{-2N} \.
\end{align*}

\noindent Hence we can find constants $K$ and $C$ such that for any $N\in\N$ and $\delta>0$,

$$ \P\left( \norm{P(X^N\otimes I_M, I_N\otimes Y^M)} - \norm{P(x\otimes I_M, 1\otimes Y^M)} \geq \delta + C N^{-1/4} \right) \leq e^{-K \delta^2 N} + d e^{-2N} \.$$

\noindent And we get \eqref{upper} by replacing $\delta$ by $N^{-1/4} \delta$. \\

The other inequality is trickier because we need to study the spectral measure of $PP^*(x\otimes I_M, 1\otimes Y^M)$, which is far from easy. 
We mainly rely on the Theorem 1.1 from \cite{SS15}. We summarize the part of this theorem which is interesting for us in the proposition 
below.

\begin{prop}\label{tail}
	Let $x = (x_1 \etc x_d)$ be a system of free semicircular variable, $p_{i,j}\in \C\langle X_1 \etc X_d \rangle$ be such that 
	$S = (p_{i,j}(x))_{i,j}$ is self-adjoint with spectral measure $\rho$ with support $K$. Then there exists a finite subset $A\subset \R$ 
	such that if $I$ is a connected component of $\R\backslash A$, then either $\rho_{|I} = 0$, or $I\subset K$. In the second situation 
	there exists an analytic function $g$ defined for some $\delta>0$ on 
	
	$$ W := \left\{ z\in\C \middle|\ |\Im z|< \delta \right\} \setminus \bigcup_{a\in A} \left\{ a-\i t \middle|\ t\in\R^+ \right\} $$
	
	\noindent such that for each $a\in A$, there exist $N\in\N$ and $\epsilon>0$ such that $(z-a)^N g(z)$ admits an expansion on 
	$W\cap \left\{ z\in\C \middle|\ |z-a|< \epsilon \right\} $ as a convergent powerseries in $r_N(z-a)$ where $r_N(z)$ is the analytic 
	$N^{th}$-root of $z$ defined with branch $C\setminus \left\{ -\i t \middle|\ t\in\R^+ \right\}$. Then $\Im g_{|I}$ is the probability 
	density function of $\rho_{|I}$.
\end{prop}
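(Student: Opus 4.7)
The plan is to combine the linearization trick with the matrix-valued Dyson equation for free semicirculars and then invoke standard analytic geometry of algebraic functions. First, I would apply Anderson's self-adjoint linearization to write $zI_k - S$ in terms of a degree-one self-adjoint matrix pencil $L(x) = a_0 \otimes 1 + \sum_i a_i \otimes x_i$ with $a_i \in M_K(\C)$ (for some $K$ depending on the $p_{i,j}$). The Cauchy transform of $\rho$ would then be recovered as a specific scalar entry of the $M_K(\C)$-valued resolvent
$$ G_L(z) := (\mathrm{id}_{M_K} \otimes \tau)\bigl( (z \otimes e_{11} + I_K \otimes 1 - L(x))^{-1} \bigr). $$

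Next, I would derive a matrix Dyson equation for $G_L$. Because the $x_i$ are free semicirculars, applying Proposition \ref{SDE} entrywise to the resolvent yields an equation of the form $G_L(z)^{-1} = \Lambda(z) - \eta(G_L(z))$, where $\Lambda$ is affine in $z$ and $\eta : M_K(\C) \to M_K(\C)$ is the completely positive map $\eta(M) = \sum_i a_i M a_i$. This is a fixed polynomial equation in $(z, G_L)$, so Tarski-type elimination shows that each entry of $G_L(z)$ is an algebraic function of $z$. The relevant scalar entry $g$ giving the Cauchy transform of $\rho$ is therefore algebraic.

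The third step exploits this algebraicity: any scalar algebraic function has finitely many branch points and algebraic singularities on $\C$, and admits a convergent Puiseux expansion around each of them, which is exactly the $r_N(z-a)$ expansion in the statement. Let $A$ be the set of such singular points that lie on $\R$. On each connected component $I$ of $\R \setminus A$, the analytic continuation of $g$ from the upper half-plane is real analytic, so either $\Im g \equiv 0$ on $I$ (and by Stieltjes inversion $\rho|_I = 0$), or $\Im g > 0$ at some point and hence on a dense subset of $I$, giving $I \subset K$ with real-analytic density.

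The main obstacle is the geometric matching near the singularities: one must ensure that the analytic continuation of $G_L$ from the upper half-plane exists on the slit strip $W$ with exactly the branch cuts $\{a - it : t \geq 0\}$, and that the Puiseux branch chosen matches the upper half-plane resolvent (in particular that $\Im g \geq 0$ on the appropriate side of each $a$). This requires careful monodromy analysis of the algebraic equation for $G_L$, compatibility with the positivity constraint $\Im G_L(z) > 0$ for $\Im z > 0$, and ruling out accumulation of branch points on $\R$; this is the substantive content supplied by \cite{SS15}.
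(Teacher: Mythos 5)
The paper does not give a proof of Proposition \ref{tail}: it is stated explicitly as a restatement of Theorem~1.1 from \cite{SS15}, which the authors cite and do not reprove. So there is no ``paper's proof'' to compare against line by line; the relevant question is whether your sketch is a plausible reconstruction of the strategy behind the cited result, and whether it could be made complete.

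Your outline is on the right track and, in broad strokes, matches the standard route: self-adjoint linearization to reduce to a degree-one pencil $L(x)=a_0\otimes 1 + \sum_i a_i\otimes x_i$, a matrix Dyson equation $G_L(z)^{-1}=\Lambda(z)-\eta(G_L(z))$ coming from the Schwinger--Dyson relations for free semicirculars, algebraicity of the relevant scalar entry by elimination, and then Puiseux expansions at the finitely many algebraic singularities followed by Stieltjes inversion. You also correctly flag the genuine difficulty: one must show that the Stieltjes-transform branch selected by the positivity condition $\Im G_L>0$ on $\C^+$ extends analytically to the slit strip $W$, that its Puiseux data at each $a\in A$ is the one describing $\rho$ near $a$, and that branch points do not accumulate on $\R$. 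These are exactly the points you defer to \cite{SS15}, and they are not routine: for instance, the matrix Dyson equation typically has several solution branches and the constructible-set argument gives a polynomial relation satisfied by the entries of some solution, so extra work (stability/uniqueness of the $\C^+$-branch, behavior of the monodromy under the positivity constraint, and control of the projection from the solution variety) is needed before one can speak of ``the'' algebraic function $g$ and its Puiseux expansion as in the statement. As a blind proof your proposal therefore has a real gap at precisely the step you identify; as a commentary on why the cited theorem should hold, it is reasonable and compatible with how the paper itself handles the proposition, namely by referring to \cite{SS15} for the substance.
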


What this means for us is that at the edge of the spectrum of $PP^*(x\otimes I_M, 1\otimes Y^M)$, either we have an atom or the density of 
the spectral measure decays like $\frac{1}{|x-a|^r}$ with $r\in\mathbb Q$ when approaching $a$. Consequently we can find $\beta\geq 0$ 
such that if $\rho$ is the spectral measure of $PP^*(x\otimes I_M, 1\otimes Y^M)$ then for $\varepsilon>0$ small enough,

$$ \rho\left(\left[\norm{PP^*(x\otimes I_M, 1\otimes Y^M)} - \varepsilon, \infty \right]\right) \geq \varepsilon^{\beta} \.$$

\noindent Consequently if once again $g$ is a $\mathcal{C}^{\infty}$ function which takes value $0$ on $(-\infty,0]$, $1$ on $[1/2,\infty)$, 
and belongs to $(0,1]$ otherwise. We then take  $f_{\varepsilon}:t\mapsto g(\varepsilon^{-1} (t - \norm{PP^*(x\otimes I_M, 1\otimes Y^M)} + \varepsilon))$ for some $\varepsilon\ge 0$. Then

\begin{align*}
&\P\left( \norm{PP^*(X^N\otimes I_M, I_N\otimes Y^M)} \leq \norm{PP^*(x\otimes I_M, 1\otimes Y^M)} - \varepsilon \right) \\
&= \P\left( \frac{1}{MN} \tr_{NM}\left( f_{\varepsilon}(PP^*(X^N\otimes I_M, I_N\otimes Y^M))\right) = 0 \right) \\
&\leq \P\left( \left| \frac{1}{MN} \tr_{NM}\left( f_{\varepsilon}(PP^*)\right) - \E\left[\frac{1}{MN} \tr_{NM}\left( f_{\varepsilon}(PP^*)\right)\right] \right| \geq \E\left[\frac{1}{MN} \tr_{NM}\left( f_{\varepsilon}(PP^*)\right)\right] \right) \\
&\leq \frac{\var\left( \frac{1}{MN} \tr_{NM}\left( f_{\varepsilon}(PP^*)\right) \right)}{\E\left[ \frac{1}{MN} \tr_{NM}\left( f_{\varepsilon}(PP^*)\right) \right]^2} \.
\end{align*}

\noindent Thanks to \eqref{newvar}, we have

\begin{align*}
\var\left( \frac{1}{MN} \tr_N\left( f_{\varepsilon}(PP^*)\right) \right) &\leq \frac{C}{N^2} \left( \E\left[ \frac{1}{M N}\tr_{NM}\left( (f_{\varepsilon}')^2(PP^*) \right)\right] + \varepsilon^{-2} e^{- N} \right) \\
&\leq \frac{C}{N^2} \left( \norm{f_{\varepsilon}'}^2 + \varepsilon^{-2} \right)\leq \frac{C'}{N^2} \varepsilon^{-2} \.
\end{align*}

\noindent On the contrary, with the same kind of computations which let us get Lemma \ref{meilleurestime}, we can find constants $C$ and $K$ such that

\begin{align*}
\E\left[ \frac{1}{MN} \tr_{NM}\left( f_{\varepsilon}(PP^*)\right) \right] &\geq \tau\otimes\tau_M(f_{\varepsilon}(PP^*)) - C \frac{\varepsilon^{-4}}{N^2} \\
&\geq \rho\left(\left[\norm{PP^*(x\otimes I_M, 1\otimes Y^M)} - \varepsilon/2, \infty \right]\right) - C \frac{\varepsilon^{-4}}{N^2} \geq K \min(1,\varepsilon)^{\beta} - C \frac{\varepsilon^{-4}}{N^2} \.
\end{align*}

\noindent Therefore we find finite constants $C$ and $K$ such that
$$
\P\left( \norm{PP^*(X^N\otimes I_M, I_N\otimes Y^M)} \leq \norm{PP^*(x\otimes I_M, 1\otimes Y^M)} - \varepsilon \right) 
\leq  \frac{K}{N^2 \varepsilon^2} \left( \min(1,\varepsilon)^{\beta} - C \frac{\varepsilon^{-4}}{N^2} \right)^{-2} \.$$
Now we fix $r = c N^{-1/(3+\beta)}$, with $c$ constant such that for any $N$,

$$ \min(1,r)^{\beta} - \frac{C}{N^2r^{4}} \geq \frac{\min(1,r)^{\beta}}{2} \.$$

\noindent Then, we have

\begin{align*}
& \E\left[ \left( \norm{PP^*(x\otimes I_M, 1\otimes Y^M)} - \norm{PP^*(X^N\otimes I_M, I_N\otimes Y^M)}\right)_+ \right] \\
&= \int_{\R^+} \P\left( \norm{PP^*(X^N\otimes I_M, I_N\otimes Y^M)} \leq \norm{PP^*(x\otimes I_M, 1\otimes Y^M)} - \varepsilon \right) d\varepsilon \\
&\leq r + 4K N^{-2} \int_{r}^{\infty} \varepsilon^{-2}\min(1,\epsilon)^{-2\beta} d\varepsilon \leq r + 4K N^{-2} (r^{-1-2\beta}+1) \\
&\leq C N^{-1/(3+\beta)} \.
\end{align*}
We deduce the following bound
\begin{align*}
&\P\left( \norm{PP^*(X^N\otimes I_M, I_N\otimes Y^M)} - \E\left[\norm{PP^*(X^N\otimes I_M, I_N\otimes Y^M)}\right] \leq -\delta - K_P\left(\norm{Y^M}\right)\ e^{-N} \right)	\\
&\geq \P\Big( \norm{PP^*(X^N\otimes I_M, I_N\otimes Y^M)} - \norm{PP^*(x\otimes I_M, 1\otimes Y^M)} \\
&\quad\quad\quad\quad \leq -\delta - K_P(\norm{Y^M}) e^{-N} - \E\left[ \left(\norm{PP^*(x\otimes I_M, 1\otimes Y^M)} - \norm{PP^*(X^N\otimes I_M, I_N\otimes Y^M)}\right)_+ \right] \Big)	\\
&\geq \P\Big( \norm{PP^*(X^N\otimes I_M, I_N\otimes Y^M)} - \norm{PP^*(x\otimes I_M, 1\otimes Y^M)} \leq -\delta - C N^{-1/(3+\beta)} \Big) \.
\end{align*}

\noindent Since on the event $\left\{\forall i, \norm{X_i^N}\leq D\right\}$ with $D$ as in \eqref{boundary}, we have

\begin{align*}
	&\norm{PP^*(X^N\otimes I_M, I_N\otimes Y^M)} - \norm{PP^*(x\otimes I_M, 1\otimes Y^M)} \\
	&\leq \left(\norm{P(X^N\otimes I_M, I_N\otimes Y^M)} - \norm{P(x\otimes I_M, 1\otimes Y^M)}\right) \left( J_P(\norm{Y^M}) + \norm{P(x\otimes I_M, 1\otimes Y^M)} \right) \,
\end{align*}
we deduce that
\begin{align*}
&\P\left( \norm{PP^*(X^N\otimes I_M, I_N\otimes Y^M)} - \E\left[\norm{PP^*(X^N\otimes I_M, I_N\otimes Y^M)}\right] \leq -\delta - K_P\left(\norm{Y^M}\right)\ e^{-N} \right)	\\
&\geq \P\Big( \norm{PP^*(X^N\otimes I_M, I_N\otimes Y^M)} - \norm{PP^*(x\otimes I_M, 1\otimes Y^M)} \leq -\delta - C N^{-1/(3+\beta)} \text{ and } \forall i, \norm{X_i^N}\leq D \Big) \\
&\geq \P\left( \norm{P(X^N\otimes I_M, I_N\otimes Y^M)} - \norm{P(x\otimes I_M, 1\otimes Y^M)} \leq \frac{ -\delta - C N^{-1/(3+\beta)}}{J_P(\norm{Y^M}) + \norm{P(x\otimes I_M, 1\otimes Y^M)}} \right) \\
&\quad\quad - \P(\exists i, \norm{X_i^N}\geq D)	\\
&\geq \P\left( \norm{P(X^N\otimes I_M, I_N\otimes Y^M)} - \norm{P(x\otimes I_M, 1\otimes Y^M)} \leq \frac{ -\delta - C N^{-1/(3+\beta)}}{J_P(\norm{Y^M}) + \norm{P(x\otimes I_M, 1\otimes Y^M)}} \right) \\
&\quad\quad - d e^{-2N} \.	\\
\end{align*}

\noindent On the other side thanks to Proposition \ref{lips}, we have

\begin{align*}
&\P\left( \norm{PP^*(X^N\otimes I_M, I_N\otimes Y^M)} - \E\left[\norm{PP^*(X^N\otimes I_M, I_N\otimes Y^M)}\right] \leq -\delta - K_P\left(\norm{Y^M}\right)\ e^{-N} \right)	\\
&\leq d\ e^{-2N} + e^{-\frac{\delta^2 N}{H_P\left(\norm{Y^{M}}\right)}} \.
\end{align*}

\noindent Hence we can find constants $K$ and $C$ such that for any $\delta>0$,

$$ \P\left( \norm{P(X^N\otimes I_M, I_N\otimes Y^M)} - \norm{P(x\otimes I_M, 1\otimes Y^M)} \leq -\delta - C N^{-1/(3+\beta)} \right) \leq e^{-K \delta^2 N} + 2d\ e^{-2N} \. $$

\noindent And we get \eqref{lower} by replacing $\delta$ by $N^{-1/(3+\beta)} \delta$. \\

\section*{Acknowledgements}
B. C. was partially funded by JSPS KAKENHI 17K18734, 17H04823, 15KK0162. F. P. benefited also from the aforementioned Kakenhi  
grants and a MEXT JASSO fellowship. A. Guionnet and F. Parraud were partially supported by Labex Milyon (ANR-10-LABX-0070) of Universit\'e� de Lyon.
The authors would like to thank Narutaka Ozawa for supplying reference \cite{ozabr} for Lemma 4.2.

\bibliographystyle{abbrv}


\end{document}